\documentclass[11pt]{article}

\usepackage{fullpage}
\usepackage{amsmath,amsthm,amsfonts,amssymb,mathtools}
\usepackage{upgreek}
\usepackage{hyperref}
\usepackage{caption}
\usepackage{color}
\usepackage[noadjust]{cite}

\hypersetup{hidelinks} 

\theoremstyle{definition}
\newtheorem{definition}{Definition}[section]
\newtheorem{remark}[definition]{Remark}

\newtheorem{example}[definition]{Example}

\theoremstyle{plain}
\newtheorem{theorem}[definition]{Theorem}
\newtheorem{lemma}[definition]{Lemma}
\newtheorem{corollary}[definition]{Corollary}
\newtheorem{proposition}[definition]{Proposition}

\newcommand{\gauss}[2]{\genfrac{[}{]}{0pt}{}{#1}{#2}}

\def\MatXR{\operatorname{Mat}_{X}(\mathbb{R})}
\def\MatF{\operatorname{Mat}_{d+1}(\mathbb{F})}

\def\<{\langle}
\def\>{\rangle}

\begin{document}

\title{\bfseries Geometric parameter sequences for $Q$-polynomial distance-regular graphs}

\author{
	Jae-Ho Lee
	\thanks{\parbox[t]{0.94\textwidth}{
	Department of Mathematics and Statistics, University of North Florida,
	Jacksonville, FL 32224, USA, Email: \texttt{jaeho.lee@unf.edu}
	}}
	\quad
	Jongyook Park
	\thanks{\parbox[t]{0.94\textwidth}{
	Department of Mathematics, Kyungpook National University,
	Daegu 41566, Republic of Korea, Email: \texttt{jongyook@knu.ac.kr} (Corresponding author)
	}}
	\quad
	Ian Seong
	\thanks{\parbox[t]{0.94\textwidth}{
	Department of Mathematics, Williams College,
	Williamstown, MA 01267, USA, Email: \texttt{is11@williams.edu}
	}}
}

\date{}

\maketitle

\begingroup
\renewcommand{\thefootnote}{}
\footnotetext{All authors contributed equally to this paper.}
\addtocounter{footnote}{-1}
\endgroup

\begin{abstract}
Let $\Gamma$ be a distance-regular graph with diameter $D \geq 3$, and let $\theta$ be a nonzero eigenvalue of $\Gamma$ distinct from its valency.
Motivated by the geometric parameter sequences of geometric distance-regular graphs, we introduce analogous sequences defined recursively from the intersection numbers of $\Gamma$ and $\theta$, without assuming a geometric structure.
We show that the recursion is well defined if and only if all terms of the standard sequence corresponding to $\theta$ are nonzero and consecutive terms are distinct.
In particular, the geometric parameter sequences associated with the smallest eigenvalue always exist.
Suppose that $\Gamma$ is $Q$-polynomial with respect to an eigenvalue ordering $\theta_0,\theta_1, \ldots,\theta_D$.
Let $\{\varphi_i\}_{i=1}^{D}$ and $\{\phi_i\}_{i=1}^{D}$ be the first and second split sequences of the primary Leonard system, respectively, and let $\beta$ denote its fundamental parameter.
We prove that the geometric parameter sequences associated with $\theta_D$ exist if and only if $\varphi_i\neq\phi_i$ for every $1\leq i\leq D$, and if and only if $\beta\neq-2$ or $D$ is even.
When the sequences exist, we obtain closed formulas in terms of the primary parameter array.
We introduce the split ratio sequence $\rho_i=\varphi_i/\phi_i$.
Using this sequence, we obtain a bidiagonal factorization of $\mathcal{B}-\theta_D I$, where $\mathcal{B}$ is the intersection matrix of $\Gamma$.
This factorization remains valid even when the geometric parameter sequences associated with $\theta_D$ do not exist.
If $\Gamma$ is geometric and $\theta_D$ is its smallest eigenvalue, then the geometric parameter sequences associated with $\theta_D$ coincide with those of $\Gamma$.
\end{abstract}

\medskip
\noindent\textbf{Keywords:} geometric distance-regular graph; $Q$-polynomial; Leonard system; geometric parameter sequence; split ratio sequence.

\smallskip
\noindent\textbf{2020 Mathematics Subject Classification:} 05E30, 05C50

\section{Introduction}
Let $\Gamma$ denote a distance-regular graph with diameter $D \geq 3$, valency $k$, and smallest eigenvalue $\theta_{\min}$.
Every clique of $\Gamma$ has size at most $1 - k/\theta_{\min}$ \cite[Proposition~4.4.6]{BCN}.
A clique attaining this bound is called a Delsarte clique.
The graph $\Gamma$ is called geometric whenever there exists a collection $\mathcal{C}$ of Delsarte cliques such that every edge of $\Gamma$ is contained in a unique member of $\mathcal{C}$ \cite{1993Godsil}.
In this case, $\mathcal{C}$ is called a geometric structure on $\Gamma$.
Examples of geometric distance-regular graphs include the Hamming, Johnson, Grassmann, dual polar, and bilinear forms graphs; see \cite[Chapter~9]{BCN}.

\smallskip
Geometric distance-regular graphs have been studied through their Delsarte cliques and associated incidence structures.
Bang, Hiraki, and Koolen \cite{2007BHK} related the intersection numbers of geometric distance-regular graphs to parameters associated with their Delsarte cliques.
For each fixed integer $m\geq 2$, Koolen and Bang \cite{2010KoolenBang} proved that there are only finitely many non-geometric distance-regular graphs with diameter at least three, intersection number $c_2\geq 2$, and smallest eigenvalue at least $-m$.
Metsch \cite{1995Metsch,1999Metsch} used cliques and incidence structures to characterize Grassmann and bilinear forms graphs in certain parameter ranges.
Building on Metsch's work, Lv and Koolen \cite{2026LvKoolen} established sufficient conditions for geometricity and improved the parameter bound for characterizing Grassmann and bilinear forms graphs.
Moreover, Koolen, Lv, and Gavrilyuk \cite{2026KoolenLvGavrilyuk} refined Metsch's clique-based approach to characterize the Grassmann graphs $J_2(2D+3,D)$ for $D\geq 3$.

\smallskip
Suppose that $\Gamma$ is geometric, and fix a geometric structure $\mathcal{C}$.
Associated with $\mathcal{C}$ are two sequences $\{\psi_i\}_{i=0}^{D-1}$ and $\{\tau_i\}_{i=0}^{D}$.
For $0 \leq i \leq D-1$, if a vertex $x$ is at distance $i$ from a clique $C \in \mathcal{C}$, then $\psi_i$ is the number of vertices of $C$ at distance $i$ from $x$.
For $1 \leq i \leq D$, if vertices $x,y$ are at distance $i$, then $\tau_i$ is the number of cliques $C \in \mathcal{C}$ containing $y$ and at distance $i-1$ from $x$.
Both counts depend only on $i$; see \cite[Section~4]{2007BHK}.
We set $\tau_0=0$.
The relations in \cite[Proposition~4.2]{2007BHK} determine $\{\psi_i\}_{i=0}^{D-1}$ and $\{\tau_i\}_{i=0}^{D}$ recursively from the intersection numbers of $\Gamma$ and $\theta_{\min}$.
In particular, both sequences are independent of the choice of $\mathcal{C}$.
We call $\{\psi_i\}_{i=0}^{D-1}$ and $\{\tau_i\}_{i=0}^{D}$ the geometric parameter sequences of $\Gamma$.
In this paper, motivated by the recursive relations, we study when analogous parameter sequences can be defined from the intersection numbers of $\Gamma$ and an eigenvalue without assuming that $\Gamma$ is geometric.

\smallskip
We now summarize the results of this paper.
Let $\theta$ denote a nonzero eigenvalue of $\Gamma$ distinct from $k$, and set $\omega_\theta=1-k/\theta$.
Starting with $\widehat{\psi}_0(\theta)=1$ and $\widehat{\tau}_0(\theta)=0$, we recursively define
$$
	\widehat{\tau}_i(\theta)=\frac{c_i}{\widehat{\psi}_{i-1}(\theta)} \quad (1\leq i\leq D),
	\qquad
	\widehat{\psi}_i(\theta)=\omega_\theta+\frac{b_i}{\theta+\widehat{\tau}_i(\theta)} \quad (1\leq i\leq D-1),
$$
whenever the denominators are nonzero.
Here $b_i$ and $c_i$ are intersection numbers of $\Gamma$.
When this recursion is well defined, we call $\{\widehat{\psi}_i(\theta)\}_{i=0}^{D-1}$ and $\{\widehat{\tau}_i(\theta)\}_{i=0}^{D}$ the geometric parameter sequences associated with $\theta$.
Let $\{u_i(\theta)\}_{i=0}^{D}$ be the standard sequence corresponding to $\theta$. 
We show that the recursion is well defined if and only if $u_i(\theta)\neq 0$ and $u_i(\theta)-u_{i-1}(\theta)\neq 0$ for every $1\leq i\leq D$. 
In particular, the geometric parameter sequences associated with $\theta_{\min}$ always exist.

\smallskip
Now assume that $\Gamma$ is $Q$-polynomial with respect to an eigenvalue ordering $\theta_0,\theta_1,\ldots,\theta_D$.
The primary module for the Terwilliger algebra of $\Gamma$ carries a Leonard system \cite[Lemmas~16.1,~16.2]{2022TerDRG}, which we call the primary Leonard system.
Let $\{\varphi_i\}_{i=1}^{D}$ and $\{\phi_i\}_{i=1}^{D}$ denote the first and second split sequences of the primary Leonard system, respectively, and let $\beta$ be its fundamental parameter.
Our main result shows that the geometric parameter sequences associated with $\theta_D$ exist if and only if $\varphi_i\neq\phi_i$ for every $1\leq i\leq D$, and if and only if $\beta\neq-2$ or $D$ is even; see Theorem~\ref{thm:closed-form}. 
When the sequences exist, we give closed formulas for them in terms of the parameter array of the primary Leonard system.

\smallskip
For $1 \leq i \leq D$, define $\rho_i = \varphi_i/\phi_i$.
We call $\{\rho_i\}_{i=1}^{D}$ the split ratio sequence.
We show that, when the geometric parameter sequences associated with $\theta_D$ exist, they can be expressed in terms of the split ratios.
We also obtain a bidiagonal factorization of $\mathcal{B}-\theta_D I$ in terms of the split ratios, where $\mathcal{B}$ is the intersection matrix of $\Gamma$.
The factorization remains valid even when the geometric parameter sequences associated with $\theta_D$ do not exist.

\smallskip
Suppose further that $\Gamma$ is geometric and $\theta_D = \theta_{\min}$. 
Then the geometric parameter sequences associated with $\theta_D$ coincide with the geometric parameter sequences of $\Gamma$, so the values given by the closed formulas must be integers.
However, the integrality of the closed-form values does not imply geometricity, as shown in Example~\ref{ex:doob}.
Finally, we determine the geometric parameter sequences associated with $\theta_D$ for distance-regular graphs with classical parameters, recovering the geometric parameter sequences of the Hamming, Johnson, Grassmann, dual polar, and bilinear forms graphs.

\smallskip
The paper is organized as follows.
Section~\ref{Sec:preliminaries} reviews Leonard systems, parameter arrays, and recurrent sequences.
Section~\ref{Sec:DRG} reviews distance-regular graphs, the Terwilliger algebra, and the primary Leonard system.
Section~\ref{Sec:geomDRGs} recalls geometric distance-regular graphs and their geometric parameter sequences.
Section~\ref{Sec:geom-parameters} introduces the geometric parameter sequences associated with an eigenvalue and gives a criterion for their well-definedness.
Section~\ref{Sec:Rec-split} develops the recurrence and split sequence relations needed for the $Q$-polynomial setting.
Section~\ref{Sec:closed-form} gives the main existence theorem and closed formulas.
Section~\ref{Sec:split-ratios} introduces the split ratio sequence and develops the associated bidiagonal factorization.
Finally, Section~\ref{Sec:classical-families} treats distance-regular graphs with classical parameters.

\section{Leonard systems and recurrent sequences}
\label{Sec:preliminaries}

In this section, we review some background concerning Leonard systems, parameter arrays, and recurrent sequences.
Let $\mathbb{F}$ denote a field with characteristic zero.
Let $d$ denote a nonnegative integer.

\subsection{Leonard systems and parameter arrays}

Let $\mathsf{V}$ denote a vector space over $\mathbb{F}$ with dimension $d+1$.
Let $\operatorname{End}(\mathsf{V})$ denote the $\mathbb{F}$-algebra consisting of the $\mathbb{F}$-linear maps from $\mathsf{V}$ to $\mathsf{V}$.
Let $\MatF$ denote the $\mathbb{F}$-algebra consisting of the $(d+1)\times(d+1)$ matrices over $\mathbb{F}$ whose rows and columns are indexed by $0,1,\ldots,d$.
Let $\mathbb{F}^{d+1}$ denote the $\mathbb{F}$-vector space consisting of the column vectors of length $d+1$ that have entries in $\mathbb{F}$.
We view $\mathbb{F}^{d+1}$ as a left module for $\MatF$.
An element $\mathsf{A} \in \operatorname{End}(\mathsf{V})$ is said to be \emph{diagonalizable} whenever $\mathsf{V}$ is spanned by the eigenspaces of $\mathsf{A}$.
The element $\mathsf{A}$ is called \emph{multiplicity-free} whenever $\mathsf{A}$ is diagonalizable and each eigenspace of $\mathsf{A}$ has dimension one.
Note that $\mathsf{A}$ is multiplicity-free if and only if $\mathsf{A}$ has $d+1$ mutually distinct eigenvalues in $\mathbb{F}$.
Assume that $\mathsf{A}$ is multiplicity-free.
Let $\{\uptheta_i\}^d_{i=0}$ denote an ordering of eigenvalues of $\mathsf  A$.
For $0 \leq i \leq d$, let $\mathsf  V_i$ denote the eigenspace of $\mathsf A$ corresponding to $\uptheta_i$.
Also, define $\mathsf{E}_i \in \operatorname{End}(\mathsf{V})$ such that  
$$
	(\mathsf{E}_i-\mathsf{I})\mathsf{V}_i=0, \qquad
	\mathsf{E}_i\mathsf{V}_j=0 \quad \text{if} \quad j \ne i  \ (0\leq j \leq d).
$$
Here $\mathsf{I}$ denotes the identity of $\operatorname{End}(\mathsf{V})$.
We observe that (i) $\mathsf{E}_i\mathsf{E}_j = \delta_{ij}\mathsf{E}_i$ $(0 \leq i, j \leq d)$; (ii) $\sum^d_{i=0} \mathsf{E}_i = \mathsf{I}$; (iii) $\mathsf{A}\mathsf{E}_i = \mathsf{E}_i\mathsf{A} = \uptheta_i \mathsf{E}_i$ $(0 \leq i \leq d)$; (iv) $\mathsf{V}_i=\mathsf{E}_i\mathsf{V}$ ($0\leq i \leq d$).
Moreover, 
\begin{equation*}\label{prim idemp}
	\mathsf{E}_i = \prod_{\substack{0 \leq j \leq d \\ j \ne i}}\frac{\mathsf{A}-\uptheta_j \mathsf{I}}{\uptheta_i - \uptheta_j } \qquad (0 \leq i \leq d).
\end{equation*}
We call $\mathsf E_i$ the \emph{primitive idempotent} of $\mathsf A$ for $\uptheta_i$ (or $\mathsf V_i$).

\begin{definition}[cf. {\cite[Definition 1.4]{2001TerLAA}}]\label{Def:LS}
By a \emph{Leonard system} on  $\mathsf V$ we mean a sequence
\begin{equation*}
	\Phi=(\mathsf A; \{\mathsf E_i\}^d_{i=0}; \mathsf A^*; \{\mathsf E^*_i\}^d_{i=0})
\end{equation*}
of elements in $\operatorname{End}(\mathsf V)$ that satisfy the conditions (i)--(v) below.
\begin{itemize}
\setlength{\itemsep}{0pt}
	\item[(i)] Each of $\mathsf A$, $\mathsf A^*$ is multiplicity-free.
	\item[(ii)] $\{\mathsf E_i\}^d_{i=0}$ is an ordering of the primitive idempotents of $\mathsf A$.
	\item[(iii)] $\{\mathsf E^*_i\}^d_{i=0}$ is an ordering of the primitive idempotents of $\mathsf A^*$.
	\item[(iv)] $\mathsf E_i\mathsf A^*\mathsf E_j = \begin{cases}
	0 & \text{if} \quad |i-j|>1; \\
	\ne 0 & \text{if} \quad |i-j|=1
	\end{cases}
	\qquad \qquad (0 \leq i, j \leq d)$.
	\item[(v)] $\mathsf E^*_i\mathsf A\mathsf E^*_j = \begin{cases}
	0 & \text{if} \quad |i-j|>1; \\
	\ne 0 & \text{if} \quad |i-j|=1
	\end{cases}
	\qquad \qquad (0 \leq i, j \leq d)$.
\end{itemize}
The Leonard system $\Phi$ is said to be \emph{over} $\mathbb{F}$ and have \emph{diameter $d$}.
\end{definition}

Let $\Phi = (\mathsf{A}; \{\mathsf{E}_i\}_{i=0}^d; \mathsf{A}^*; \{\mathsf{E}_i^*\}_{i=0}^d)$ denote a Leonard system on $\mathsf V$.
Then the sequence
\begin{equation*}\label{eq:Phi-double-down}
	\Phi^{\Downarrow}=(\mathsf A; \{\mathsf E_{d-i}\}_{i=0}^d; \mathsf A^*; \{\mathsf E^*_i\}_{i=0}^d)
\end{equation*}
is also a Leonard system on $\mathsf V$; see \cite[Section~1]{2001TerLAA}.
For $0 \leq i \leq d$, let $\uptheta_i$ (resp. $\uptheta_i^*$) denote the eigenvalue of $\mathsf A$ (resp. $\mathsf A^*$) associated with $\mathsf E_i$ (resp. $\mathsf E_i^*$).
We call $\{\uptheta_i\}_{i=0}^d$ (resp. $\{\uptheta_i^*\}_{i=0}^d$) the \emph{eigenvalue sequence} (resp. \emph{dual eigenvalue sequence}) of $\Phi$.
By \cite[Theorem~3.2]{2001TerLAA}, there exist nonzero scalars $\{\upvarphi_i\}_{i=1}^d$ and a basis of $\mathsf V$ such that, with respect to this basis, the matrices representing $\mathsf A$ and $\mathsf A^*$ are given as follows:
$$
	\mathsf A:
	\begin{pmatrix}
	\uptheta_0 & & & & {\mathbf0} \\
	1 & \uptheta_1 & & & \\
	& 1 & \uptheta_2 & & \\
	& & \ddots   & \ddots & \\
	{\mathbf 0} & & & 1 & \uptheta_d
	\end{pmatrix},
	\qquad \qquad 
	\mathsf A^*:
	\begin{pmatrix}
	\uptheta_0^* & \upvarphi_1 & & & {\mathbf0} \\
	& \uptheta_1^* & \upvarphi_2 & & \\
	& & \uptheta_2^*  & \ddots & \\
	& & & \ddots & \upvarphi_d \\
	{\mathbf 0} & & & & \uptheta_d^*
	\end{pmatrix}.
$$
We call the sequence $\{\upvarphi_i\}_{i=1}^d$ the \emph{first split sequence} of $\Phi$.
Let $\{\upphi_i\}_{i=1}^d$ denote the first split sequence of $\Phi^\Downarrow$; we call it the \emph{second split sequence} of $\Phi$.
By the \emph{parameter array} of $\Phi$ we mean the sequence 
\begin{equation*}
	(\{\uptheta_i\}^d_{i=0}, \{\uptheta^*_i\}^d_{i=0}, \{\upvarphi_i\}^d_{i=1}, \{\upphi_i\}^d_{i=1}).
\end{equation*}
In \cite{2001TerLAA}, Terwilliger classified Leonard systems up to isomorphism in terms of parameter arrays.

\begin{lemma}[{\cite[Theorem 1.9]{2001TerLAA}}]\label{lem:PA}
Let 
\begin{equation}\label{parameter_array}
	(\{\uptheta_i\}^d_{i=0}, \{\uptheta^*_i\}^d_{i=0}, \{\upvarphi_i\}^d_{i=1}, \{\upphi_i\}^d_{i=1})
\end{equation}
denote a sequence of scalars in $\mathbb{F}$. 
Then there exists a Leonard system $\Phi$ over $\mathbb{F}$ with the parameter array {\rm(\ref{parameter_array})} if and only if the following conditions {\rm(i)--(v)} hold:
\begin{enumerate}
	\item[\rm (i)] $\uptheta_i \ne \uptheta_j,$ \quad $\uptheta^*_i \ne \uptheta^*_j$ \quad if \quad $i \ne j$ \qquad $(0 \leq i,j \leq d)$; 
	\item[\rm(ii)] $\upvarphi_i \ne 0$, \quad $\upphi_i \ne 0$ \qquad $(1 \leq i \leq d)$;
	\item[\rm(iii)] $\upvarphi_i = \upphi_1\sum^{i-1}_{h=0}\frac{\uptheta_h-\uptheta_{d-h}}{\uptheta_0-\uptheta_d}+ (\uptheta^*_i-\uptheta^*_0)(\uptheta_{i-1}-\uptheta_d)$ \qquad $(1 \leq i \leq d)$;
	\item[\rm(iv)] $\upphi_i = \upvarphi_1\sum^{i-1}_{h=0}\frac{\uptheta_h-\uptheta_{d-h}}{\uptheta_0-\uptheta_d}+(\uptheta^*_i-\uptheta^*_0)(\uptheta_{d-i+1}-\uptheta_0)$ \qquad $(1 \leq i \leq d)$;
	\item[\rm(v)] The expressions
	\begin{equation}\label{eq:PA_cond(v)}
		\frac{\uptheta_{i-2}-\uptheta_{i+1}}{\uptheta_{i-1}-\uptheta_i}, \qquad 
		\frac{\uptheta^*_{i-2}-\uptheta^*_{i+1}}{\uptheta^*_{i-1}-\uptheta^*_i}
	\end{equation}
	are equal and independent of $i$ for $2 \leq i \leq d-1$.
\end{enumerate}
Moreover, if {\rm (i)--(v)} hold, then $\Phi$ is unique up to isomorphism
of Leonard systems.
\end{lemma}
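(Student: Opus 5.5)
Since this is Terwilliger's classification theorem, the plan is to follow the split-decomposition approach of \cite{2001TerLAA} and prove the two directions separately.

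\emph{Necessity.} Let $\Phi$ be a Leonard system. Conditions (i) and (ii) are immediate. Multiplicity-freeness of $\mathsf A$ and $\mathsf A^*$ gives (i). The $\upvarphi_i$ are nonzero by \cite[Theorem~3.2]{2001TerLAA}, and applying the same result to $\Phi^{\Downarrow}$ shows the $\upphi_i$ are nonzero.

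For (iii)--(v), I would use the split decomposition
$$\mathsf U_i=(\mathsf E^*_0\mathsf V+\cdots+\mathsf E^*_i\mathsf V)\cap(\mathsf E_i\mathsf V+\cdots+\mathsf E_d\mathsf V).$$
On it, $(\mathsf A-\uptheta_i\mathsf I)\mathsf U_i\subseteq \mathsf U_{i+1}$ and $(\mathsf A^*-\uptheta^*_i\mathsf I)\mathsf U_i\subseteq\mathsf U_{i-1}$. The $\upvarphi_i$ are read off as the scalars by which $(\mathsf A^*-\uptheta^*_i\mathsf I)(\mathsf A-\uptheta_{i-1}\mathsf I)$ acts on $\mathsf U_{i-1}$.

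The same construction applied to $\Phi^{\Downarrow}$ gives $\upphi_i$. Comparing the two split bases through $\mathsf E_0$ and $\mathsf E_d$ produces linear identities linking $\upvarphi_i,\upphi_i$ to $\upvarphi_1,\upphi_1$, and these become (iii) and (iv).

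Condition (v) comes from the tridiagonal relations $[\mathsf A,\mathsf A^2\mathsf A^*-\beta\mathsf A\mathsf A^*\mathsf A+\mathsf A^*\mathsf A^2-\gamma(\mathsf A\mathsf A^*+\mathsf A^*\mathsf A)-\varrho\mathsf A^*]=0$ and its dual. I would derive these first:
\begin{itemize}
\item the space $\mathsf{End}(\mathsf V)$ is spanned by $\mathsf A^r\mathsf A^{*s}$;
\item the condition $\mathsf E_i\mathsf A^*\mathsf E_j=0$ for $|i-j|>1$ forces a three-term recurrence on the $\uptheta_i$ with a common $\beta$;
\item applying the same argument to $\mathsf A^*$ gives the recurrence for the $\uptheta^*_i$.
\end{itemize}
The key point is that the two recurrences share the same $\beta$, which is exactly the common value of the expressions in \eqref{eq:PA_cond(v)}.

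\emph{Sufficiency.} Given scalars satisfying (i)--(v), define $\mathsf A$ (lower bidiagonal) and $\mathsf A^*$ (upper bidiagonal) by the displayed matrices. Each is multiplicity-free by (i), since its eigenvalues are its diagonal entries. We must check axioms (iv) and (v) of Definition~\ref{Def:LS}.

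I would first show that $\mathsf A^*$ acts tridiagonally on an eigenbasis of $\mathsf A$. To do this, write down explicit eigenvectors of the bidiagonal $\mathsf A$, namely $v_j=\prod_{h<j}(\mathsf A-\uptheta_h\mathsf I)$ applied to a suitable vector. Then verify that $(\mathsf A^*-\uptheta^*_j\mathsf I)$ sends the $\mathsf E_j$-eigenline into $\mathsf E_{j-1}\mathsf V+\mathsf E_{j+1}\mathsf V$. This reduces to polynomial identities in the parameters, which should follow from (iii) and (iv) once (v) is used to put $\uptheta_i$ and $\uptheta^*_i$ in the standard closed forms (the $q$-, $q=1$, and $\beta=-2$ cases).

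The nonvanishing of $\mathsf E_i\mathsf A^*\mathsf E_j$ for $|i-j|=1$ would follow from (ii). The dual condition is handled by the symmetric argument with the roles of $\mathsf A$ and $\mathsf A^*$ exchanged, using $\upphi_i$ in place of $\upvarphi_i$, via $\Phi^{\Downarrow}$.

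\emph{Uniqueness.} Two Leonard systems with the same parameter array have the same bidiagonal matrix pair in their split bases. Hence they are isomorphic via the map matching the split bases.

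\emph{Main obstacle.} I expect the hardest step to be the sufficiency computation showing tridiagonality. My first approach would be a case split by the solution type of the recurrence in (v), followed by verification of the resulting identities. A cleaner alternative would be to show directly that (iii)--(v) imply the pair satisfies the tridiagonal relations, and then invoke the fact that the tridiagonal relations plus multiplicity-freeness yield the tridiagonality of the actions.
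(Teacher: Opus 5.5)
The paper gives no proof of this lemma; it imports it from \cite[Theorem~1.9]{2001TerLAA}. As a proof, your outline handles (i), (ii) and uniqueness correctly, and the split-basis setup for $\Phi$ and $\Phi^{\Downarrow}$ is sound. However, the steps that carry the theorem are either incorrect as stated or deferred.

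\emph{Necessity.} The claim that $\mathsf E_i\mathsf A^*\mathsf E_j=0$ for $|i-j|>1$ forces a three-term recurrence on the $\uptheta_i$ is false. Let $\mathsf A$ be diagonal with arbitrary distinct entries and $\mathsf A^*$ any tridiagonal matrix with nonzero sub- and superdiagonal. Then axiom (iv) of Definition~\ref{Def:LS} holds, yet the $\uptheta_i$ satisfy no recurrence. The recurrence, and the equality of the two $\beta$'s that you call the key point, can only come from the interaction of both tridiagonality axioms, and you give no mechanism for that.

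Deriving the tridiagonal relations first does not help. Sandwich $[\mathsf A,\mathsf A^2\mathsf A^*-\beta\mathsf A\mathsf A^*\mathsf A+\mathsf A^*\mathsf A^2-\gamma(\mathsf A\mathsf A^*+\mathsf A^*\mathsf A)-\varrho\mathsf A^*]$ between $\mathsf E_i$ and $\mathsf E_{i+1}$. The result is $(\uptheta_i-\uptheta_{i+1})\bigl(\uptheta_i^2-\beta\uptheta_i\uptheta_{i+1}+\uptheta_{i+1}^2-\gamma(\uptheta_i+\uptheta_{i+1})-\varrho\bigr)\mathsf E_i\mathsf A^*\mathsf E_{i+1}$, and $\mathsf E_i\mathsf A^*\mathsf E_{i+1}\neq0$. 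So under the axioms, the two relations with a common $\beta$ are just a restatement of (v).

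The gap propagates to (iii)--(iv). Here is your comparison through $\mathsf E_0$ made precise. Take $0\neq u\in\mathsf E^*_0\mathsf V$, $\tau_i=\prod_{h<i}(x-\uptheta_h)$ and $\eta_i=\prod_{h<i}(x-\uptheta_{d-h})$. Expand $\eta_i(\mathsf A)u$ in the basis $\{\tau_j(\mathsf A)u\}$ and apply $\mathsf E_0$ to $\mathsf A^*\eta_i(\mathsf A)u=\uptheta^*_i\eta_i(\mathsf A)u+\upphi_i\eta_{i-1}(\mathsf A)u$. This yields $\upphi_i=\upvarphi_1\kappa_i+(\uptheta^*_i-\uptheta^*_0)(\uptheta_{d-i+1}-\uptheta_0)$, where $\kappa_i=\bigl(\eta_i(\uptheta_1)-\eta_i(\uptheta_0)\bigr)/\bigl((\uptheta_1-\uptheta_0)\eta_{i-1}(\uptheta_0)\bigr)$; applying $\mathsf E_d$ gives the analogue for (iii). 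But $\kappa_i=\upvartheta_i$ is not an identity: for $d=4$ and $\uptheta=(0,1,2,3,5)$ one gets $\kappa_3=22/15$ while $\upvartheta_3=7/5$. It needs the $\beta$-recurrence, so (iii) and (iv) also wait on (v).

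\emph{Sufficiency.} This is where the substance lies, and the proposal stops at ``should follow''. The criterion you state is also wrong. The inclusion $(\mathsf A^*-\uptheta^*_j\mathsf I)\mathsf E_j\mathsf V\subseteq\mathsf E_{j-1}\mathsf V+\mathsf E_{j+1}\mathsf V$ would force $\operatorname{tr}(\mathsf E_j\mathsf A^*)=\uptheta^*_j$. For the bidiagonal pair, however, $\operatorname{tr}(\mathsf E_0\mathsf A^*)=\uptheta^*_0+\upvarphi_1/(\uptheta_0-\uptheta_1)\neq\uptheta^*_0$.

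Half of each axiom is automatic from the bidiagonal shapes. Indeed, $\mathsf A^*$ maps $\mathsf E_i\mathsf V+\cdots+\mathsf E_d\mathsf V$ into $\mathsf E_{i-1}\mathsf V+\cdots+\mathsf E_d\mathsf V$, and $\mathsf A$ maps $\mathsf E^*_0\mathsf V+\cdots+\mathsf E^*_i\mathsf V$ into $\mathsf E^*_0\mathsf V+\cdots+\mathsf E^*_{i+1}\mathsf V$. The other half of axiom (iv) amounts to $\mathsf A^*w_i=\uptheta^*_iw_i+\upphi_iw_{i-1}$ for $w_i=\eta_i(\mathsf A)v_0$. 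Writing $\eta_i=\sum_jc_{ij}\tau_j$, this is the overdetermined system $c_{ij}\upvarphi_j=(\uptheta^*_i-\uptheta^*_{j-1})c_{i,j-1}+\upphi_i c_{i-1,j-1}$ for $1\leq j\leq i\leq d$. Checking that system from (iii)--(v) is exactly the computation you omit.

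Nonvanishing for $|i-j|=1$ also goes through this basis and needs $\upphi_i\neq0$, not only $\upvarphi_i\neq0$; for $d=1$, $\mathsf E_1\mathsf A^*\mathsf E_0=0$ exactly when $\upphi_1=0$.

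The dual axiom is not a free symmetry via $\Phi^{\Downarrow}$. Reversing the basis and exchanging $\mathsf A$ and $\mathsf A^*$ gives the array $(\{\uptheta^*_{d-i}\},\{\uptheta_{d-i}\},\{\upvarphi_{d-i+1}\},\{\upphi_i\})$. Its condition (iii) is equivalent to $\upvartheta_i$ being the same for $\{\uptheta^*_h\}$ as for $\{\uptheta_h\}$, which requires the common $\beta$ through Lemma~\ref{lem:theta-sum}.

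Your fallback via the tridiagonal relations also has a hole. The relation kills $\mathsf E_i\mathsf A^*\mathsf E_j$ only where its polynomial factor is nonzero. For $d=3$, $\beta=0$, $\uptheta=(3,0,1,4)$ (so $\gamma=4$, $\varrho=-3$), that factor vanishes at $(\uptheta_3,\uptheta_0)$. Hence $\mathsf E_3\mathsf A^*\mathsf E_0$ is not forced to vanish, and the bidiagonal shape does not supply it either.
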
 

\begin{lemma}[{\cite[Lemma~6.5]{2001TerLAA}}]\label{lem:split-difference-general}
Let $\Phi$ denote a Leonard system with parameter array
$$
	(\{\uptheta_i\}_{i=0}^d, \{\uptheta_i^*\}_{i=0}^d,
	 \{\upvarphi_i\}_{i=1}^d, \{\upphi_i\}_{i=1}^d).
$$
Then, for $1\leq i\leq d$,
\begin{equation}\label{eq:split-difference-general}
	\upvarphi_i - \upphi_i =
	(\uptheta_i^* - \uptheta_{i-1}^*) \sum_{h=0}^{i-1}(\uptheta_h - \uptheta_{d-h}).
\end{equation}
\end{lemma}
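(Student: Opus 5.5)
The plan is to derive the identity directly from the classification in Lemma~\ref{lem:PA}, by subtracting condition (iv) from condition (iii).

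Write $S_i=\sum_{h=0}^{i-1}(\uptheta_h-\uptheta_{d-h})$ for $0\le i\le d$, so that $S_0=0$ and $S_i-S_{i-1}=\uptheta_{i-1}-\uptheta_{d-i+1}$. Subtracting (iv) from (iii) gives
\begin{equation*}
\upvarphi_i-\upphi_i=(\upphi_1-\upvarphi_1)\frac{S_i}{\uptheta_0-\uptheta_d}+(\uptheta^*_i-\uptheta^*_0)\bigl(\uptheta_{i-1}-\uptheta_{d-i+1}+\uptheta_0-\uptheta_d\bigr).
\end{equation*}
At $i=1$ this reads $\upvarphi_1-\upphi_1=(\upphi_1-\upvarphi_1)+2(\uptheta^*_1-\uptheta^*_0)(\uptheta_0-\uptheta_d)$, because $S_1=\uptheta_0-\uptheta_d$. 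Hence $\upvarphi_1-\upphi_1=(\uptheta^*_1-\uptheta^*_0)(\uptheta_0-\uptheta_d)$, which is the case $i=1$ of \eqref{eq:split-difference-general}.

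Substituting this value back, the claim \eqref{eq:split-difference-general} becomes equivalent to the following identity, which involves only the two eigenvalue sequences:
\begin{equation*}
(\uptheta^*_i-\uptheta^*_0)\bigl(S_1+S_i-S_{i-1}\bigr)=\bigl(\uptheta^*_i-\uptheta^*_{i-1}+\uptheta^*_1-\uptheta^*_0\bigr)S_i \qquad (1\le i\le d).
\end{equation*}
Here I have used $\uptheta_0-\uptheta_d=S_1$ and $\uptheta_{i-1}-\uptheta_{d-i+1}=S_i-S_{i-1}$. The identity is trivial for $i=1$. For $d\le 2$ the only other case is $i=2=d$, which I check by hand: $S_2=\uptheta_0-\uptheta_2$, and both sides equal $(\uptheta^*_2-\uptheta^*_0)(\uptheta_0-\uptheta_2)$.

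For $d\ge3$ I would use condition (v). It says there is a common scalar $\beta$ such that both $\{\uptheta_h\}$ and $\{\uptheta^*_h\}$ satisfy a three-term recurrence $x_{h-1}-\beta x_h+x_{h+1}=\text{const}$. I would then split into the standard cases:
\begin{itemize}
\item if $\beta\neq\pm2$, write $\beta=q+q^{-1}$, so that $\uptheta_h=a+bq^h+cq^{-h}$ and $\uptheta^*_h=a^*+b^*q^h+c^*q^{-h}$;
\item if $\beta=2$, both sequences are polynomials in $h$ of degree at most $2$;
\item if $\beta=-2$, both are of the form $a+bh+c(-1)^h$.
\end{itemize}
In each case I would compute $S_i$ in closed form as a geometric or arithmetic sum and verify the identity by direct algebra.

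The main obstacle is this last verification, since the identity mixes the two sequences. I expect it to factor cleanly once both sides are written over a common denominator. For example, when $\beta\neq\pm2$ the quantity $S_i$ should factor as $(q^i-1)(1-q^{d-i+1})$ times a term independent of $i$, matching the factorization of $\uptheta^*_i-\uptheta^*_0$. To avoid the case analysis, I would first try an induction on $i$. I would pass from $i$ to $i+1$ using $S_{i+1}=S_i+(\uptheta_i-\uptheta_{d-i})$ together with the recurrence applied to $\uptheta^*_{i+1}-\uptheta^*_i$ and $\uptheta^*_{i-1}$. If the inductive step does not close uniformly, I would fall back on the explicit case-by-case check described above.
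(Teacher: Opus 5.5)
Your reduction is sound. Subtracting (iv) from (iii) of Lemma~\ref{lem:PA} is correct, as is the value $\upvarphi_1-\upphi_1=(\uptheta^*_1-\uptheta^*_0)(\uptheta_0-\uptheta_d)$, which you could also read off directly from (iii) at $i=1$. The equivalent identity
\[
(\uptheta^*_i-\uptheta^*_0)(S_1+S_i-S_{i-1})=(\uptheta^*_i-\uptheta^*_{i-1}+\uptheta^*_1-\uptheta^*_0)S_i
\]
and your check of the cases $d\le 2$ are also right, and the identity does hold. But all of the content sits in this identity: for $d=3$, $i=3$ it is equivalent to condition (v) itself. You only announce its verification.

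Moreover, one of your three cases is set up wrongly. For $\beta=-2$ the recurrence $x_{h-1}+2x_h+x_{h+1}=\gamma$ has characteristic polynomial $(x+1)^2$. Its solutions are therefore $\kappa+(\lambda+\mu h)(-1)^h$, not $a+bh+c(-1)^h$; the latter gives $x_{h-1}+2x_h+x_{h+1}=4a+4bh$, which is not constant. On your family the identity is false: $d=4$, $i=3$, $\uptheta_h=h$, $\uptheta^*_h=h+(-1)^h$ give $-4$ on the left and $12$ on the right. So the planned case check would not close as written.

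To finish, you need not parametrize $\{\uptheta_h\}$ at all. Divide by $S_1$ and use Lemma~\ref{lem:theta-sum}, so that the identity reads $(\uptheta^*_i-\uptheta^*_0)(1+\upvartheta_i-\upvartheta_{i-1})=(\uptheta^*_i-\uptheta^*_{i-1}+\uptheta^*_1-\uptheta^*_0)\upvartheta_i$.

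For $\beta\neq\pm2$ one gets $1+\upvartheta_i-\upvartheta_{i-1}=(q^{i-1}+1)(q^{d-i+1}-1)/(q^d-1)$. Writing $\uptheta^*_h=\kappa+\lambda q^h+\mu q^{-h}$ gives $\uptheta^*_i-\uptheta^*_0=(q^i-1)(\lambda-\mu q^{-i})$ and $\uptheta^*_i-\uptheta^*_{i-1}+\uptheta^*_1-\uptheta^*_0=(q-1)(q^{i-1}+1)(\lambda-\mu q^{-i})$, and the two sides agree.

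For $\beta=2$ and $\uptheta^*_h=\kappa+\lambda h+\mu h^2$, both sides equal $2i(d-i+1)(\lambda+\mu i)/d$.

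For $\beta=-2$ it suffices, by parity via Lemma~\ref{lem:theta-sum}(iii),(iv), that two things hold. For even $i$ you need $\uptheta^*_i-\uptheta^*_0=\mu i$ and $\uptheta^*_i-\uptheta^*_{i-1}+\uptheta^*_1-\uptheta^*_0=2\mu(i-1)$. For odd $i$ you need $\uptheta^*_i+\uptheta^*_{i-1}=\uptheta^*_1+\uptheta^*_0$. Both hold for $\kappa+(\lambda+\mu h)(-1)^h$.

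For comparison, the paper gives no argument of its own and simply quotes \cite{2001TerLAA}. A shorter, uniform proof needs neither (v) nor any case split. In a $\Phi$-split basis,
\[
\operatorname{tr}(\mathsf E^*_i\mathsf A)=\uptheta_i+\frac{\upvarphi_i}{\uptheta^*_i-\uptheta^*_{i-1}}+\frac{\upvarphi_{i+1}}{\uptheta^*_i-\uptheta^*_{i+1}},
\]
where terms involving $\upvarphi_0$ or $\upvarphi_{d+1}$ are omitted. The same computation for $\Phi^{\Downarrow}$, which has the same $\mathsf A$, $\mathsf E^*_i$ and $\uptheta^*_i$, replaces $\uptheta_i$ and $\upvarphi$ by $\uptheta_{d-i}$ and $\upphi$. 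Subtracting gives $x_1=\uptheta_0-\uptheta_d$ and $x_{i+1}=x_i+\uptheta_i-\uptheta_{d-i}$ for $x_i=(\upvarphi_i-\upphi_i)/(\uptheta^*_i-\uptheta^*_{i-1})$, which telescopes to the claim. Your route has the merit of using only results already quoted in the paper (Lemmas~\ref{lem:PA} and~\ref{lem:theta-sum}). However, it derives an elementary consequence of the split decompositions from the full classification, and it needs (v) in an essential way.
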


\subsection{Recurrent sequences}

In this subsection, we recall some formulas involving recurrent sequences from \cite{2001TerLAA, 2020TerGComb}.
Let $\overline{\mathbb F}$ denote an algebraic closure of $\mathbb F$.
Throughout this subsection, let $\beta$ denote a scalar in $\mathbb{F}$ and let $\{\uptheta_i\}_{i=0}^d$ denote an arbitrary sequence of scalars in $\mathbb{F}$.

\begin{definition}[{\cite[Definition~8.2]{2001TerLAA}}]\label{def:beta-rec}
The sequence $\{\uptheta_i\}_{i=0}^d$ is said to be \emph{$\beta$-recurrent} whenever
\begin{equation*}
	\uptheta_{i-2} - (\beta+1)\uptheta_{i-1} + (\beta+1)\uptheta_i - \uptheta_{i+1} = 0 \qquad (2 \leq i \leq d-1).
\end{equation*}
\end{definition}

By Definition \ref{def:beta-rec}, if $d \leq 2$, then $\{\uptheta_i\}_{i=0}^d$ is $\beta$-recurrent for every $\beta \in \mathbb{F}$.
To avoid this triviality, we assume that $d \geq 3$.

\smallskip
Let $\Phi$ be a Leonard system as in Definition~\ref{Def:LS}.
By Lemma~\ref{lem:PA}(v), there exists a scalar $\beta$ such that both the eigenvalue sequence and the dual eigenvalue sequence of $\Phi$ are $\beta$-recurrent.
More precisely, $\beta+1$ is the common value of \eqref{eq:PA_cond(v)}.
We call $\beta$ the \emph{fundamental parameter of $\Phi$}; see \cite[Definition~1.8]{2015Nomura}.

\smallskip
Assume that $\{\uptheta_i\}_{i=0}^d$ are mutually distinct, and consider the sums
\begin{equation}\label{sum:vartheta}
	\sum^{i-1}_{h=0} \frac{\uptheta_h-\uptheta_{d-h}}{\uptheta_0-\uptheta_d} 
	\qquad
	(0 \leq i \leq d+1).
\end{equation}
We denote the sum in \eqref{sum:vartheta} by $\upvartheta_i$.
We remark that 
$$
	\upvartheta_0 = 0, \qquad 
	\upvartheta_1 = 1, \qquad 
	\upvartheta_d = 1, \qquad 
	\upvartheta_{d+1} = 0.
$$

\begin{lemma}[{\cite[Lemma 10.2]{2001TerLAA}}]\label{lem:theta-sum}
Assume that $\{\uptheta_i\}_{i=0}^d$ are mutually distinct and $\beta$-recurrent.
Then for $0\leq i \leq d+1$ we have the following.
\begin{itemize}
	\item[\rm(i)] Suppose $\beta\ne \pm 2$. Then
	\begin{equation}\label{sum case(i)}
		\sum^{i-1}_{h=0} \frac{\uptheta_h - \uptheta_{d-h}}{\uptheta_0-\uptheta_d} = \frac{(q^i-1)(q^{d-i+1}-1)}{(q-1)(q^d-1)},
	\end{equation}
	where $q\in\overline{\mathbb F}$ satisfies $q+q^{-1}=\beta$.
	
	\item[\rm(ii)] Suppose $\beta=2$. Then
	\begin{equation}\label{sum case(ii)}
		\sum^{i-1}_{h=0} \frac{\uptheta_h-\uptheta_{d-h}}{\uptheta_0-\uptheta_d} = \frac{i(d-i+1)}{d}.
	\end{equation}
	\item[\rm(iii)] Suppose $\beta=-2$ and $d$ odd. Then
	\begin{equation}\label{sum case(iii)}
		\sum^{i-1}_{h=0} \frac{\uptheta_h-\uptheta_{d-h}}{\uptheta_0-\uptheta_d} = 
		\begin{cases}
		0, & \quad \text{if $i$ is even}; \\
		1, & \quad \text{if $i$ is odd}.
		\end{cases}
	\end{equation}
	
	\item[\rm(iv)] Suppose $\beta=-2$ and $d$ even. Then
	\begin{equation}\label{sum case(iv)}
		\sum^{i-1}_{h=0} \frac{\uptheta_h-\uptheta_{d-h}}{\uptheta_0-\uptheta_d} = 
		\begin{cases}
		i/d, & \quad \text{if $i$ is even}; \\
		(d-i+1)/d, & \quad \text{if $i$ is odd}.
		\end{cases}
	\end{equation}
\end{itemize}
\end{lemma}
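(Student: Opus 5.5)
The plan is to solve the linear recurrence of Definition~\ref{def:beta-rec} explicitly, to pass to the antisymmetric difference sequence $x_h=\uptheta_h-\uptheta_{d-h}$, and then to sum a geometric or arithmetic progression. We assume $d\geq 3$, as stated before Definition~\ref{def:beta-rec}.

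\textbf{Step 1 (general solution of the recurrence).} The relation $\uptheta_{i-2}-(\beta+1)\uptheta_{i-1}+(\beta+1)\uptheta_i-\uptheta_{i+1}=0$ is a third-order linear recurrence. Its characteristic polynomial is $-(t^3-(\beta+1)t^2+(\beta+1)t-1)=-(t-1)(t^2-\beta t+1)$, with roots $1,q,q^{-1}$, where $q+q^{-1}=\beta$ and $q\in\overline{\mathbb F}$. Since $d\geq 3$, the values $\uptheta_0,\uptheta_1,\uptheta_2$ determine the sequence, and a three-parameter family of solutions exhausts it. We therefore take:
\begin{itemize}
\item for $\beta\neq\pm2$: $\uptheta_i=a+bq^i+cq^{-i}$;
\item for $\beta=2$: $\uptheta_i=a+bi+ci^2$;
\item for $\beta=-2$: $\uptheta_i=a+b(-1)^i+ci(-1)^i$.
\end{itemize}
Here $a,b,c\in\overline{\mathbb F}$. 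In each case one checks directly that these are solutions and that the three parameters can match any initial triple.

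\textbf{Step 2 (the differences $x_h$).} Put $x_h=\uptheta_h-\uptheta_{d-h}$. Then $x_{d-h}=-x_h$, the constant $a$ cancels, and the sum in the statement equals $\sum_{h=0}^{i-1}x_h/x_0$. Note that $x_0\neq0$ by the distinctness of the $\uptheta_i$. Substituting the forms from Step 1 gives:
\begin{itemize}
\item for $\beta\neq\pm2$: $x_h=b'(q^h-q^{d-h})$ with $b'=b-cq^{-d}$;
\item for $\beta=2$: $x_h=(b+cd)(2h-d)$, since $i^2-(d-i)^2=d(2i-d)$;
\item for $\beta=-2$ and $d$ odd: $x_h=(-1)^h(2b+cd)$;
\item for $\beta=-2$ and $d$ even: $x_h=c(-1)^h(2h-d)$.
\end{itemize}
In every case $x_0\neq0$ forces the leading coefficient to be nonzero. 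In case (i) it also forces $q^d\neq1$, and hence $q\neq1$, so the denominators in \eqref{sum case(i)} are legitimate.

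\textbf{Step 3 (summing).}
\begin{itemize}
\item In case (i), the geometric sums give
$$\sum_{h<i}(q^h-q^{d-h})=\frac{q^i-1}{q-1}-q^{d-i+1}\,\frac{q^i-1}{q-1}=-\frac{(q^i-1)(q^{d-i+1}-1)}{q-1}.$$
Dividing by $x_0/b'=1-q^d$ yields \eqref{sum case(i)}.
\item In case (ii), $\sum_{h<i}(2h-d)=i(i-1)-di=-i(d-i+1)$. Dividing by $-d$ gives \eqref{sum case(ii)}.
\item In case (iii), the partial sums of $(-1)^h$ alternate between $1$ and $0$, which is \eqref{sum case(iii)}.
\item In case (iv), group consecutive terms in pairs: $(2h-d)-(2h+2-d)=-2$. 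For $i$ even this gives the total $-i$. For $i$ odd it gives $-(i-1)+(2(i-1)-d)=i-1-d$. Dividing by $x_0/c=-d$ yields \eqref{sum case(iv)}.
\end{itemize}

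\textbf{Main obstacle.} The step needing the most care is Step 1. One must justify that the displayed families give all $\beta$-recurrent sequences. This requires checking that the map $(a,b,c)\mapsto(\uptheta_0,\uptheta_1,\uptheta_2)$ is bijective in each case, which amounts to the characteristic roots $1,q,q^{-1}$ being distinct when $\beta\neq\pm2$. It also requires working over $\overline{\mathbb F}$ so that $q$ exists. The final formulas are rational in $q$ and symmetric under $q\leftrightarrow q^{-1}$, so the choice of root does not matter.
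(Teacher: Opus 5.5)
Your proposal is correct and takes essentially the same route as the source the paper cites; the paper gives no proof of its own. In that source the sums are computed directly from the closed-form solutions $a+bq^i+cq^{-i}$, $a+bi+ci^2$, $a+b(-1)^i+ci(-1)^i$ of the $\beta$-recurrence, exactly as in your Steps 1--3, and your computations in every case check out (incidentally, they only use $\uptheta_0\neq\uptheta_d$, not full distinctness).
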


Assume that $\{\uptheta_i\}_{i=0}^d$ are mutually distinct, and consider the products
\begin{equation}\label{eq:def-upxi}
	\prod_{h=0}^{i-2}
	\frac{\uptheta_i - \uptheta_{h+1}}{\uptheta_{i+1} - \uptheta_h}
	\qquad (1\leq i\leq d-1).
\end{equation}
We denote the product in \eqref{eq:def-upxi} by $\upxi_i$.
Note that $\upxi_1=1$.

\begin{lemma}[{\cite[Lemma~19.5]{2020TerGComb}}]\label{lem:upxi-formulas}
Assume that $\{\uptheta_i\}_{i=0}^d$ are mutually distinct and $\beta$-recurrent.
For $1 \leq i \leq d-1$, the following hold.
\begin{itemize}
	\item[\rm(i)] Suppose $\beta \neq \pm2$. Then
	$$
		\upxi_i = q^{i - 1} \frac{(q - 1)(q^2 - 1)}{(q^i - 1)(q^{i+1} - 1)},
	$$
	where $q\in\overline{\mathbb F}$ satisfies $q+q^{-1}=\beta$.
	\item[\rm(ii)] Suppose $\beta = 2$. Then
	$$
		\upxi_i = \frac{2}{i(i+1)}.
	$$
	\item[\rm(iii)] Suppose $\beta = -2$. Then
	$$
		\upxi_i=
		\begin{cases}
			-2/i, & \text{if $i$ is even},\\
			2/(i+1), & \text{if $i$ is odd}.
		\end{cases}
	$$
\end{itemize}
\end{lemma}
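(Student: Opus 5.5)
The plan is to replace the $\beta$-recurrence by an explicit closed form for $\uptheta_h$, compute each factor $(\uptheta_i-\uptheta_{h+1})/(\uptheta_{i+1}-\uptheta_h)$ of $\upxi_i$ directly, and then let the product over $h$ telescope.

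\emph{Closed form.} The recurrence in Definition~\ref{def:beta-rec} is a linear recurrence of order three. Its characteristic polynomial is
$$
x^3-(\beta+1)x^2+(\beta+1)x-1=(x-1)(x^2-\beta x+1).
$$
Because the recurrence holds for $2\leq i\leq d-1$, the whole sequence is determined by $\uptheta_0,\uptheta_1,\uptheta_2$. Since a closed form with three free constants can match any three initial values, we can write, with constants $a,b,c\in\overline{\mathbb F}$:
\begin{itemize}
\item[(i)] if $\beta\neq\pm2$, then $\uptheta_h=a+bq^h+cq^{-h}$, where $q+q^{-1}=\beta$ and $q\neq\pm1$;
\item[(ii)] if $\beta=2$, then $\uptheta_h=a+bh+ch^2$;
\item[(iii)] if $\beta=-2$, then $\uptheta_h=a+b(-1)^h+ch(-1)^h$.
\end{itemize}

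\emph{The factors.} Put $m=i-h$, so $m$ runs over $2,\dots,i$ as $h$ runs over $0,\dots,i-2$.

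In case (i) we have $\uptheta_s-\uptheta_t=(q^s-q^t)(b-cq^{-s-t})$. In both $\uptheta_i-\uptheta_{h+1}$ and $\uptheta_{i+1}-\uptheta_h$ the index sum is $i+h+1$, so the second factors are equal. They are nonzero because the $\uptheta$'s are mutually distinct, so they cancel. The factor becomes
$$
\frac{q^i-q^{h+1}}{q^{i+1}-q^h}=q\,\frac{q^{m-1}-1}{q^{m+1}-1}.
$$
Taking the product over $m=2,\dots,i$ telescopes to $q^{i-1}(q-1)(q^2-1)/\bigl((q^i-1)(q^{i+1}-1)\bigr)$. No denominator vanishes, again by distinctness.

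In case (ii), $\uptheta_s-\uptheta_t=(s-t)(b+c(s+t))$. The same cancellation of the $b+c(i+h+1)$ factors leaves $(m-1)/(m+1)$. The product over $m=2,\dots,i$ is $2/(i(i+1))$.

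\emph{The case $\beta=-2$.} This is the only step needing care, since the factor depends on parity. The indices $i,h+1$ and $i+1,h$ have differences $m-1$ and $m+1$, which have the same parity.
\begin{itemize}
\item If $m$ is odd, both pairs have equal parity. Then $\uptheta_i-\uptheta_{h+1}=c(-1)^i(m-1)$ and $\uptheta_{i+1}-\uptheta_h=c(-1)^{i+1}(m+1)$, so the factor is $-(m-1)/(m+1)$.
\item If $m$ is even, both pairs have opposite parity. The differences are $(-1)^i\bigl(2b+c(i+h+1)\bigr)$ and $(-1)^{i+1}\bigl(2b+c(i+h+1)\bigr)$, so the factor is $-1$.
\end{itemize}
The product over $m=2,\dots,i$ therefore carries the sign $(-1)^{i-1}$. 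Its absolute value is $\prod_{m\text{ odd},\,3\le m\le i}(m-1)/(m+1)$, which telescopes to $2/(M+1)$, where $M$ is the largest odd integer at most $i$. For $i$ odd this is $M=i$, giving $2/(i+1)$. For $i$ even it is $M=i-1$, giving $-2/i$ once the sign is included.

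The nonvanishing of the cancelled factors in this case also follows from the distinctness of the $\uptheta$'s. I expect the parity bookkeeping in this last case to be the main (though minor) obstacle; the rest is routine.
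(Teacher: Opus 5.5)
Your proof is correct. The paper gives no argument of its own for this lemma; it quotes it from Terwilliger \cite[Lemma~19.5]{2020TerGComb}. So your computation is a self-contained substitute for the citation, not a variant of an in-paper proof.

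Passing to the closed forms of a $\beta$-recurrent sequence is the natural route. These are the same closed forms that underlie Lemma~\ref{lem:theta-sum} and the paper's appeal to \cite[Lemma~9.3]{2001TerLAA}. Your version also shows that each factor in \eqref{eq:def-upxi} depends only on $m=i-h$ and $\beta$, once the common factors are cancelled using distinctness. This makes the remark after the lemma transparent: $\upxi_i$ does not depend on which mutually distinct $\beta$-recurrent sequence is used.

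The one step you assert without justification is that the three constants can be chosen to match $\uptheta_0,\uptheta_1,\uptheta_2$. In case (i) this is a Vandermonde system with distinct nodes $1,q,q^{-1}$. In cases (ii) and (iii) the coefficient matrices at $h=0,1,2$ have determinants $2$ and $-4$, which are nonzero because $\mathbb F$ has characteristic zero. It is worth stating this in a line.

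The cancellations are justified exactly as you say. The factors $b-cq^{-(i+h+1)}$, $b+c(i+h+1)$, $2b+c(i+h+1)$ and $c$ each divide $\uptheta_{i+1}-\uptheta_h\neq 0$, so none of them vanishes. The parity bookkeeping for $\beta=-2$ also checks out. As a sanity check, the recurrence at $i=2$ gives $\uptheta_3-\uptheta_0=\uptheta_1-\uptheta_2$, hence $\upxi_2=-1$, as your formula predicts.
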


\noindent
The formulas in Lemma~\ref{lem:upxi-formulas} show that $\upxi_i$ depends only on $i$ and $\beta$.
Consequently, $\upxi_i$ is unchanged if $\{\uptheta_i\}_{i=0}^d$ is replaced by another mutually distinct $\beta$-recurrent sequence.

\section{Distance-regular graphs and the primary Leonard system}
\label{Sec:DRG}

In this section, we review some background concerning distance-regular graphs, the Terwilliger algebra, and the primary Leonard system.
For more details, see \cite{BCN, 1992TerJAC, 2022TerDRG}.

\subsection{Distance-regular graphs}
Let $\mathbb{R}$ denote the real number field.
Let $X$ denote a nonempty finite set.
Let $\operatorname{Mat}_X(\mathbb{R})$ denote the $\mathbb{R}$-algebra consisting of matrices with rows and columns indexed by $X$ and all entries in $\mathbb{R}$.
Let $V=\mathbb{R}^X$ denote the vector space over $\mathbb{R}$ consisting of column vectors with coordinates indexed by $X$ and all entries in $\mathbb{R}$.
We view $V$ as a left module for $\MatXR$.
We endow $V$ with the Euclidean inner product $\langle \cdot, \cdot \rangle$ that satisfies $\langle u, v \rangle = u^\top v$ for $u,v \in V$.
For $x \in X$, let $\hat{x}$ denote the vector with $1$ in the $x$-coordinate and $0$ in all other coordinates.
We call $\hat x$ the \emph{characteristic vector of $x$}.
Observe that $\{\hat{x} \mid x \in X\}$ is an orthonormal basis for $V$.
Set $\mathbf 1 = \sum_{x\in X} \hat{x}$, and let $J\in \MatXR$ denote the matrix with all entries equal to $1$.

\smallskip
Next, let $\Gamma$ denote a finite, connected, undirected graph with vertex set $X$ and no loops or multiple edges.
Let $\partial$ denote the path-length distance in $\Gamma$.
The diameter of $\Gamma$ is $D = \max\{ \partial(x,y) \mid x, y \in X\}$.
For $x \in X$ and an integer $i \geq 0$ define $\Gamma_i(x) = \{ y \in X \mid \partial(x,y) = i \}$.
We abbreviate $\Gamma(x) = \Gamma_1(x)$.
For an integer $k \geq 0$ the graph $\Gamma$ is said to be \emph{regular of valency $k$} whenever $|\Gamma(x)|=k$ for every $x \in X$.
The graph $\Gamma$ is said to be \emph{distance-regular} whenever for all integers $0 \leq h, i, j  \leq D$ and all $x, y \in X$ at distance $\partial(x,y)=h$, the number
$$
	p^h_{i,j} = | \Gamma_i(x) \cap \Gamma_j(y) | 
$$
is independent of $x$, $y$ and depends only on $h, i, j$.
The $p^h_{i,j}$ are called the \emph{intersection numbers} of $\Gamma$.
Examples of distance-regular graphs include Johnson graphs, Hamming graphs, and Grassmann graphs \cite[Chapter~9]{BCN}.
Suppose that $\Gamma$ is distance-regular.
By construction $p^h_{i,j} = p^h_{j,i}$ for $0 \leq h,i,j \leq D$.
We abbreviate
\begin{equation*}
	c_i = p^i_{1, i-1} \ (1 \leq i \leq D),\qquad
	a_i = p^i_{1, i} \ (0 \leq i \leq D),\qquad
	b_i = p^i_{1, i+1} \ (0 \leq i \leq D-1).
\end{equation*}
We use the conventions $c_0=0$ and $b_D=0$.
By construction, $c_1 = 1$, $a_0 = 0$, and $c_i > 0$ $(1 \leq i \leq D)$, $b_i>0$ $(0 \leq i \leq D-1)$.
Moreover, $\Gamma$ is regular with valency $k=b_0$ and 
$$
	a_i+b_i+c_i=k \qquad (0 \leq i \leq D).
$$

For the rest of this paper, we assume that $\Gamma$ is distance-regular with $D \geq 3$.
For $0 \leq i \leq D$ define the matrix $A_i \in \operatorname{Mat}_{X}(\mathbb{R})$ by
\begin{equation*}
	(A_i)_{xy} =
	\begin{cases}
	1, & \text{if } \quad \partial(x,y) = i, \\
	0, & \text{if } \quad \partial(x,y) \ne i
	\end{cases}
	\qquad (x,y \in X).
\end{equation*}
We call $A_i$ the \emph{$i^{\rm th}$ distance matrix of $\Gamma$}.
We abbreviate $A=A_1$ and call $A$ the \emph{adjacency matrix} of $\Gamma$.
Observe that $A_0=I$,  $A_i^\top=A_i$ $(0\leq i\leq D)$, and $\sum_{i=0}^D A_i=J$.
By distance-regularity, 
$$
	A_iA_j=\sum_{h=0}^D p_{i,j}^h A_h \qquad (0 \leq i, j \leq D).
$$
Consequently, $A_0,A_1,\ldots,A_D$ form a basis for a commutative subalgebra $M$ of $\MatXR$.
We call $M$ the \emph{Bose-Mesner algebra} of $\Gamma$.
Moreover, $AA_i=b_{i-1}A_{i-1}+a_iA_i+c_{i+1}A_{i+1}$ $(0\leq i\leq D)$, where the terms involving $A_{-1}$ and $A_{D+1}$ are omitted.
It follows that $A_i$ is a polynomial in $A$ for $0\leq i\leq D$.
Note that $A$ is real symmetric, so it is diagonalizable.
By the \emph{eigenvalues of $\Gamma$}, we mean the eigenvalues of $A$.
By \cite[p.~128]{BCN}, the graph $\Gamma$ has exactly $D+1$ distinct eigenvalues, which we denote by $k=\theta_0,\theta_1,\ldots,\theta_D$.
For an eigenvalue $\theta$ of $\Gamma$, define the sequence $\{u_i(\theta)\}_{i=0}^D$ by
$u_0(\theta)=1$, $u_1(\theta)=\theta/k$, and
\begin{equation}\label{eq:standard-sequence}
	c_i u_{i-1}(\theta)+a_i u_i(\theta)+b_i u_{i+1}(\theta)  =\theta u_i(\theta) \qquad (1\leq i\leq D),
\end{equation}
where the term $b_Du_{D+1}(\theta)$ is omitted when $i = D$.
We call $\{u_i(\theta)\}_{i=0}^D$ the \emph{standard sequence} corresponding to $\theta$; see \cite[p.~128]{BCN}.
By \cite[p.~45]{BCN}, the algebra $M$ has a second basis $E_0,E_1,\ldots,E_D$ such that $E_i^\top=E_i$ $(0 \leq i \leq D)$ and 
\begin{equation*}
	\sum_{i=0}^D E_i=I, \qquad E_iE_j=\delta_{ij}E_i \quad (0 \leq i,j \leq D).
\end{equation*}
We order this basis so that $E_i$ is associated with the eigenvalue $\theta_i$.
Thus
$$
	A=\sum_{i=0}^D \theta_iE_i.
$$
We call $E_0,E_1,\ldots,E_D$ the \emph{primitive idempotents of $\Gamma$}.
Note that $E_0=|X|^{-1}J$.

\smallskip
For $B,C \in \operatorname{Mat}_X(\mathbb{R})$, let $B \circ C$ denote the entrywise product of $B$ and $C$. 
Then $M$ is closed under $\circ$ since $A_i \circ A_j = \delta_{i,j} A_i$ for $0 \leq i, j \leq D$.
Therefore, there exist real scalars $q^h_{i,j}$ $(0 \leq h,i,j \leq D)$ such that
$$
	E_i \circ E_j = |X|^{-1} \sum_{h=0}^D q_{i,j}^h E_h \qquad (0 \leq i,j \leq D).
$$
The scalars $q_{i,j}^h$ are called the \emph{Krein parameters} of $\Gamma$.
The ordering $E_0, E_1, \ldots, E_D$ (or $\theta_0, \theta_1, \ldots, \theta_D$) is called \emph{$Q$-polynomial} whenever the following hold for $0 \leq h, i, j \leq D$:
\begin{itemize}
\setlength{\itemsep}{0pt}
\item[(i)] $q_{i,j}^h = 0$ if one of $h,i,j$ is greater than the sum of the other two; 
\item[(ii)] $q^h_{i,j} \ne 0$ if one of $h,i,j$ is equal to the sum of the other two.
\end{itemize}

We say that $\Gamma$ is \emph{$Q$-polynomial} whenever there exists at least one $Q$-polynomial ordering of the primitive idempotents.
In what follows, whenever $\Gamma$ is $Q$-polynomial, we fix such an ordering, denoted by
$$
	E_0,E_1,\ldots,E_D
	\quad (\text{or equivalently, } \theta_0,\theta_1,\ldots,\theta_D).
$$

\subsection{The Terwilliger algebra and the primary Leonard system}

We continue to discuss the graph $\Gamma$.
Fix a vertex $x \in X$.
For $0 \leq i \leq D$, let $E_i^* = E_i^*(x)$ denote the diagonal matrix in $\MatXR$ defined by
\begin{equation*}
	(E_i^*)_{yy}=
	\begin{cases}
	1, & \text{if } \quad \partial(x,y) = i,\\
	0, & \text{if } \quad \partial(x,y) \ne i
	\end{cases}
	\qquad (y \in X).
\end{equation*}
We call $E_i^*$ the $i^{\rm th}$ \emph{dual primitive idempotent of $\Gamma$ with respect to $x$}.
Observe that $(E_i^*)^\top = E_i^*$ $(0 \leq i \leq D)$ and 
\begin{equation}\label{eq:dpidem}
	\sum_{i=0}^D E_i^* = I, \quad
	E_i^*E_j^* = \delta_{i,j} E_i^* \qquad
	(0 \leq i, j \leq D).
\end{equation}
By \eqref{eq:dpidem}, the matrices $\{E_i^*\}_{i=0}^D$ form a basis for a commutative subalgebra $M^*=M^*(x)$ of $\MatXR$.
We call $M^*$ the \emph{dual Bose-Mesner algebra of $\Gamma$ with respect to $x$}.
By construction, $E_i^*V$ is the subspace spanned by $\{\hat{y}\mid y\in \Gamma_i(x)\}$ for $0 \leq i \leq D$.
We call $E_i^*V$ the $i^{\mathrm{th}}$ \emph{subconstituent} of $\Gamma$ with respect to $x$.
Let $T=T(x)$ denote the subalgebra of $\MatXR$ generated by $M$ and $M^*$.
The algebra $T$ is called the \emph{Terwilliger algebra} (or \emph{subconstituent algebra}) of $\Gamma$ with respect to $x$ \cite[Definition~3.3]{1992TerJAC}.

\smallskip
For the rest of this subsection, assume that $\Gamma$ is $Q$-polynomial.
Since $E_1 \in M$, there exist scalars $\{\theta_i^*\}_{i=0}^D$ such that
$$
	E_1 = |X|^{-1} \sum_{i=0}^D \theta_i^* A_i.
$$
We call $\theta_0^*, \theta_1^*, \ldots, \theta_D^*$ the \emph{dual eigenvalues of $\Gamma$}.
By \cite[Lemma~3.11]{1992TerJAC}, the scalars $\{\theta_i^*\}_{i=0}^D$ are mutually distinct.
Define the diagonal matrix $A^*=A^*(x) \in \MatXR$ by
$$
	A^* = \sum_{i=0}^D \theta_i^* E_i^*.
$$
We call $A^*$ the \emph{dual adjacency matrix of $\Gamma$ with respect to $x$}.
Note that $A^*E_i^* = E_i^*A^* = \theta_i^*E_i^*$ for $0 \leq i \leq D$.
By \cite[Lemma~3.11]{1992TerJAC}, $M^*$ is generated by $A^*$.
Therefore, $T$ is generated by $A$ and $A^*$.
By \cite[Lemma~3.2]{1992TerJAC}, the following relations hold in $T$: for $0 \leq i , j \leq D$,
\begin{align*}
	E_i^* A E_j^* &= 0 \qquad \text{if} \quad |i - j| >1, \\
	E_i A^* E_j &= 0 \qquad \text{if} \quad |i - j| >1.
\end{align*}

By a \emph{$T$-module} we mean a subspace $W$ of $V$ such that $TW \subseteq W$.
Note that $V$ is a $T$-module.
A nonzero $T$-module $W$ is said to be \emph{irreducible} whenever it contains no proper nonzero $T$-submodules.
By \cite[Lemma~3.4]{1992TerJAC}, $V$ is an orthogonal direct sum of irreducible $T$-modules.
Let $W$ denote an irreducible $T$-module.
By construction, we have the following orthogonal decompositions:
$$
	W=\sum_{i=0}^D E_i^*W, \qquad
	W=\sum_{i=0}^D E_iW.
$$
The \emph{endpoint} of $W$ is the integer $\min\{i\mid 0\leq i\leq D,\ E_i^*W\neq 0\}$.
The \emph{dual endpoint} of $W$ is the integer $\min\{i\mid 0\leq i\leq D,\ E_iW\neq 0\}$.
The \emph{diameter} of $W$ is $|\{i\mid 0\leq i\leq D,\ E_i^*W\neq 0\}| - 1$.
The \emph{dual diameter} of $W$ is $|\{i\mid 0\leq i\leq D,\ E_iW\neq 0\}| - 1$.
By \cite[Corollary~13.7]{2022TerDRG}, the dual diameter of $W$ is equal to the diameter of $W$.
Let $r$ denote the endpoint of $W$ and let $d$ denote the diameter of $W$.
By \cite[Lemma~3.9]{1992TerJAC}, we have $r+d \leq D$ and $E_i^*W\neq 0$ if and only if $r \leq i \leq r+d$ $(0 \leq i \leq D)$.
The module $W$ is called \emph{thin} whenever $\dim(E_i^*W)\leq1$ for $0\leq i\leq D$, and \emph{dual thin} whenever $\dim(E_iW)\leq1$ for $0\leq i\leq D$.

\begin{example}[cf. {\cite[Lemma~3.6]{1992TerJAC}}]
Consider the vector spaces $M\hat{x}$ and $M^*\mathbf{1}$. 
Then $M\hat{x} = M^*\mathbf{1}$, so this common space is a $T$-module.
Moreover, this $T$-module is thin, dual thin, and irreducible, with endpoint $0$, dual endpoint $0$, and diameter $D$.
This $T$-module is called \emph{primary}.
\end{example}

We now discuss how the primary $T$-module is related to a Leonard system.

\begin{lemma}[cf. {\cite[Lemmas~16.1,~16.2]{2022TerDRG}}]
Let $W$ denote the primary $T$-module. 
Then the sequence
\begin{equation}\label{eq:primaryLS}
	\Phi = (A; \{E_i\}_{i=0}^D; A^*; \{E_i^*\}_{i=0}^D )
\end{equation}
acts on $W$ as a Leonard system.
\end{lemma}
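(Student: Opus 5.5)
We verify conditions (i)--(v) of Definition~\ref{Def:LS} for the restrictions of $A$, $A^*$, $E_i$, $E_i^*$ to the primary $T$-module $W=M\hat{x}=M^*\mathbf{1}$. Since $W$ is a $T$-module, each of these maps sends $W$ into $W$, so their restrictions lie in $\operatorname{End}(W)$. We first note that $\dim W=D+1$. Indeed, the vectors $A_i\hat{x}$ for $0\le i\le D$ are nonzero and have disjoint supports, since $A_i\hat{x}$ is the characteristic vector of $\Gamma_i(x)$. Hence they form a basis of $M\hat{x}$.

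\textbf{Multiplicity-freeness (condition (i)).} We have $E_i^*\mathbf{1}=A_i\hat{x}\neq 0$, so $E_i^*W\neq 0$ for every $0\le i\le D$. By thinness, each $E_i^*W$ is one-dimensional and spanned by $A_i\hat{x}$. Consequently $A^*$ acts on $W$ diagonally with eigenvalues $\theta_i^*$, and these are mutually distinct by \cite[Lemma~3.11]{1992TerJAC}. Dually, $E_i\hat{x}\neq 0$, because $\langle E_i\hat{x},E_i\hat{x}\rangle=(E_i)_{xx}=m_i/|X|>0$, where $m_i$ is the rank of $E_i$. Since $W=M\hat{x}$ and $\{E_i\}$ is a basis of $M$, the vectors $E_i\hat{x}$ span $W$. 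There are $D+1$ of them, each nonzero, so each $E_iW$ is one-dimensional and $A$ acts on $W$ with the distinct eigenvalues $\theta_i$. Therefore both $A$ and $A^*$ are multiplicity-free on $W$.

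\textbf{Idempotent orderings (conditions (ii) and (iii)).} The restriction of $E_i$ to $W$ is the primitive idempotent of $A|_W$ for $\theta_i$, and the restriction of $E_i^*$ is the primitive idempotent of $A^*|_W$ for $\theta_i^*$. This follows from $E_iE_j=\delta_{ij}E_i$, $\sum_i E_i=I$, and $AE_i=\theta_iE_i$, together with the dual relations for $E_i^*$.

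\textbf{Tridiagonality (conditions (iv) and (v)).} The vanishing parts of (iv) and (v) are the relations from \cite[Lemma~3.2]{1992TerJAC} recalled above. For the nonvanishing part of (v), take $|i-j|=1$. Then $E_{i}^*AE_{j}^*$ applied to $A_j\hat{x}$ gives the vector indexed by $\Gamma_i(x)$ whose $y$-entry counts the neighbors of $y$ in $\Gamma_j(x)$. This entry is $c_i$ when $j=i-1$ and $b_i$ when $j=i+1$, and both are positive. For (iv), the $Q$-polynomial condition enters. We compute $E_iA^*E_j\hat{x}$ using the standard identity $A^*E_j\hat{x}=|X|\,(E_1\circ E_j)\hat{x}$, which holds up to the normalization of $A^*(x)$. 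Since $E_1\circ E_j=|X|^{-1}\sum_h q^h_{1,j}E_h$, this gives $E_iA^*E_j\hat{x}=q^i_{1,j}E_i\hat{x}$. The coefficient $q^i_{1,j}$ vanishes for $|i-j|>1$ and is nonzero for $|i-j|=1$ by the $Q$-polynomial condition, and $E_i\hat{x}\neq 0$. Since $E_j\hat{x}$ spans $E_jW$, this settles (iv).

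\textbf{Main obstacle.} The hardest step is justifying $A^*(x)E_j\hat{x}=|X|(E_1\circ E_j)\hat{x}$. To verify it, compare the $y$-coordinates of both sides. The $y$-entry of $A^*E_j\hat{x}$ is $\theta^*_{\partial(x,y)}(E_j)_{yx}$. On the other side, $(E_1)_{yx}=|X|^{-1}\theta^*_{\partial(x,y)}$ by the definition of the dual eigenvalues, so the $y$-entry of $|X|(E_1\circ E_j)\hat{x}$ is $|X|\cdot|X|^{-1}\theta^*_{\partial(x,y)}(E_j)_{yx}$. The two entries agree, which proves the identity and completes the verification.
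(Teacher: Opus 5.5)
Your proof is correct and complete. One small correction: with the paper's normalizations $E_1=|X|^{-1}\sum_i\theta_i^*A_i$ and $A^*=\sum_i\theta_i^*E_i^*$, the identity $A^*E_j\hat{x}=|X|(E_1\circ E_j)\hat{x}$ holds exactly, as your own coordinate check shows. The hedge ``up to normalization'' is therefore unnecessary.

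The paper gives no argument of its own; it defers to Terwilliger's notes. There the statement is the primary case of the general fact that $A,\{E_i\},A^*,\{E_i^*\}$ act as a Leonard system on any thin irreducible $T$-module.
\begin{itemize}
\item In that standard argument, multiplicity-freeness comes from thinness together with diameter $=$ dual diameter.
\item The nonvanishing parts of (iv) and (v) come from irreducibility. If, say, $E_{i+1}A^*E_iW=0$, then $\sum_{h\le i}E_hW$ is invariant under both $A$ and $A^*$. It would then be a proper nonzero $T$-submodule, and similarly for the other cases.
\end{itemize}

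Your route is instead a direct computation specific to the primary module.
\begin{itemize}
\item On the basis $\{A_i\hat{x}\}$, the off-diagonal action of $A$ is given by $b_i,c_i>0$.
\item On the basis $\{E_i\hat{x}\}$, the action of $A^*$ is given by the Krein parameters $q^i_{1,j}$. So condition (iv) becomes literally the $Q$-polynomial condition.
\end{itemize}

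What each approach buys:
\begin{itemize}
\item Yours is more elementary and self-contained. It uses neither irreducibility of $W$ (which in fact follows from the Leonard system property) nor the equality of diameter and dual diameter. It also exhibits the intersection and dual intersection numbers as the actual matrix entries.
\item The cited approach avoids computation and applies uniformly to every thin irreducible $T$-module, not just the primary one.
\end{itemize}
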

\noindent
The Leonard system $\Phi$ in \eqref{eq:primaryLS} is called \emph{primary}.
Recall the eigenvalues $\{\theta_i\}_{i=0}^D$ and the dual eigenvalues $\{\theta_i^*\}_{i=0}^D$ of $\Gamma$.
Let $\{\varphi_i\}_{i=1}^D$ (resp. $\{\phi_i\}_{i=1}^D$) denote the first split sequence (resp. second split sequence) of $\Phi$.
The sequence
\begin{equation}\label{eq:primaryPA}
	(\{\theta_i\}_{i=0}^D, \{\theta_i^*\}_{i=0}^D, \{\varphi_i\}_{i=1}^D, \{\phi_i\}_{i=1}^D)
\end{equation}
is called the \emph{primary parameter array}.
In what follows, whenever we refer to the \emph{parameter array of $\Gamma$}, we mean the primary parameter array with respect to the fixed $Q$-polynomial ordering.

\begin{lemma}[{\cite[Proposition~19.8]{2020TerGComb}}]\label{lem:intersection-formulas}
For $1\leq i\leq D-1$, let $\xi_i$ denote the scalar defined in \eqref{eq:def-upxi} with $\uptheta_j=\theta_j$ and $d=D$.
The intersection numbers of $\Gamma$ are given by
\begin{alignat}{2}
	b_0&=\frac{\varphi_1}{\theta_1^*-\theta_0^*},\label{eq:b0-PA}\\
	b_i&=
	\frac{\varphi_{i+1}\xi_i(\theta_i^*-\theta_0^*)}
	{(\theta_{i+1}^*-\theta_i^*)
	 (\theta_{i+1}^*-\theta_{i-1}^*)}
	& \qquad & (1\leq i\leq D-1),\label{eq:bi-PA}\\
	c_i&=
	\frac{\phi_i\xi_{D-i}(\theta_i^*-\theta_D^*)}
	{(\theta_{i-1}^*-\theta_i^*)
	 (\theta_{i-1}^*-\theta_{i+1}^*)}
	& \qquad & (1\leq i\leq D-1),\label{eq:ci-PA}\\
	c_D&=\frac{\phi_D}{\theta_{D-1}^*-\theta_D^*}.\label{eq:cD-PA}
\end{alignat}
\end{lemma}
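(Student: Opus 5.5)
The plan is to realize $b_i$, $c_i$, $a_i$ as the entries of the tridiagonal matrix representing $A$ on the primary module $W=M\hat x=M^*\mathbf 1$, and then to read these entries off from the parameter array of the primary Leonard system $\Phi$ in \eqref{eq:primaryLS}.

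\textbf{Step 1 (a normalized basis).} The vectors $v_i=E_i^*\mathbf 1$ ($0\le i\le D$) form a basis of $W$, and $v_0=\hat x$. Distance-regularity gives
$$Av_i=b_{i-1}v_{i-1}+a_iv_i+c_{i+1}v_{i+1}.$$
Since $\mathbf 1\in E_0W$, this basis is exactly what Leonard system theory calls the $\Phi$-standard basis normalized by a vector of $E_0W$.

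\textbf{Step 2 (the general Leonard system formulas).} For such a basis the off-diagonal entries of the matrix representing $\mathsf A$ are known in terms of the parameter array:
$$b_i=\upvarphi_{i+1}\frac{\tau_i^*(\uptheta_i^*)}{\tau_{i+1}^*(\uptheta_{i+1}^*)},\qquad c_i=\upphi_i\frac{\eta_{d-i}^*(\uptheta_i^*)}{\eta_{d-i+1}^*(\uptheta_{i-1}^*)},$$
where $\tau_i^*(z)=\prod_{h<i}(z-\uptheta_h^*)$ and $\eta_i^*(z)=\prod_{h<i}(z-\uptheta_{d-h}^*)$. To justify these, I would pass between the split basis of Section~2 and the standard basis as follows.
\begin{itemize}
\item Write $\mathbf 1$, as an element of $E_0W$, in the split basis in which $\mathsf A^*$ is upper bidiagonal with superdiagonal $\upvarphi_i$.
\item Compute $E_i^*\mathbf 1$ from this expansion using $E_i^*=\prod_{j\ne i}(\mathsf A^*-\uptheta_j^*)/(\uptheta_i^*-\uptheta_j^*)$.
\item Compare coefficients of $\mathsf A v_i$ in the top and bottom split components. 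This isolates $b_i$ against $\upvarphi_{i+1}$ and, for $\Phi^{\Downarrow}$, $c_i$ against $\upphi_i$.
\end{itemize}
This is the step I expect to be the main obstacle. The bookkeeping of the transition matrix between the split basis and the $E_i^*$-basis is where the products $\tau^*$ and $\eta^*$ appear. If the direct computation becomes unwieldy, I would instead quote the formula from Terwilliger's treatment of Leonard systems (the ``$b_i,c_i$ in terms of $\upvarphi,\upphi$'' theorem) rather than rederive it.

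\textbf{Step 3 (simplifying to the stated form).} For \eqref{eq:b0-PA}, take $i=0$: then $\tau_0^*=1$ and $\tau_1^*(\theta_1^*)=\theta_1^*-\theta_0^*$. For $1\le i\le D-1$, pull out the factors $h=0$ from the numerator and $h=i,i-1$ from the denominator, and reindex the remaining numerator factors by $h\mapsto h+1$:
$$\frac{\tau_i^*(\theta_i^*)}{\tau_{i+1}^*(\theta_{i+1}^*)}=\frac{\theta_i^*-\theta_0^*}{(\theta_{i+1}^*-\theta_i^*)(\theta_{i+1}^*-\theta_{i-1}^*)}\prod_{h=0}^{i-2}\frac{\theta_i^*-\theta_{h+1}^*}{\theta_{i+1}^*-\theta_h^*}.$$
The last product is $\upxi_i$ from \eqref{eq:def-upxi}, computed for the sequence $\{\theta_j^*\}$.

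\textbf{Step 4 (replacing $\upxi_i$ for $\{\theta^*_j\}$ by $\xi_i$).} By Lemma~\ref{lem:PA}(v), $\{\theta_j\}$ and $\{\theta_j^*\}$ are both $\beta$-recurrent with the same $\beta$, and both are mutually distinct. By Lemma~\ref{lem:upxi-formulas}, $\upxi_i$ depends only on $i$ and $\beta$, so this product equals $\xi_i$. This gives \eqref{eq:bi-PA}.

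\textbf{Step 5 (the $c_i$).} The identical manipulation applied to the sequence $\theta_D^*,\theta_{D-1}^*,\dots,\theta_0^*$ (again $\beta$-recurrent) yields the factor $\xi_{D-i}$, and hence \eqref{eq:ci-PA}. For $i=D$ the product is empty, which gives \eqref{eq:cD-PA}.
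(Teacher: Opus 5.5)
Your argument is correct, but it takes a different route from the paper. The paper gives no derivation: it imports the formulas, already written with $\xi_i$, from \cite[Proposition~19.8]{2020TerGComb}.

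You proceed in three steps:
\begin{itemize}
\item You identify the graph's $b_i,c_i$ with the intersection numbers of the primary Leonard system through the standard basis $E_i^*\mathbf 1$. This is legitimate, since $\mathbf 1$ spans $E_0W$ and $AE_i^*\mathbf 1=b_{i-1}E_{i-1}^*\mathbf 1+a_iE_i^*\mathbf 1+c_{i+1}E_{i+1}^*\mathbf 1$.
\item You quote the classical formulas $b_i=\varphi_{i+1}\tau_i^*(\theta_i^*)/\tau_{i+1}^*(\theta_{i+1}^*)$ and $c_i=\phi_i\eta_{D-i}^*(\theta_i^*)/\eta_{D-i+1}^*(\theta_{i-1}^*)$.
\item You convert them to the $\xi_i$ form.
\end{itemize}
The conversion is sound. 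Your factorization in Step~3 is right, including $i=1$, where the product is empty. The reversed sequence $\theta_D^*,\ldots,\theta_0^*$ is again mutually distinct and $\beta$-recurrent. Replacing the $\theta^*$-product by $\xi_i$ is exactly the remark following Lemma~\ref{lem:upxi-formulas}. Your route shows why a product built from the eigenvalues $\theta_j$ can appear in formulas naturally expressed in the dual eigenvalues, and it rests on the more widely quoted $\tau^*,\eta^*$ form. The paper's citation is shorter but hides this step.

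The derivation you sketch in Step~2 is off as written, though it does not matter since you fall back on quoting Terwilliger. In the $\Phi$-split basis, $E_i^*\mathbf 1$ lies in the span of the first $i+1$ basis vectors and $A$ raises the index. So comparing top coefficients produces $c_{i+1}$, not $b_i$, and it produces it as a sum coming from the expansion of $\mathbf 1$.

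A cleaner self-contained route uses split bases whose last vector spans $E_0W$. In the split basis of $\Phi^{\Downarrow}$, $\mathbf 1$ is a multiple of the last vector, and the matrix of $E_i^*$ has $(i,D)$ entry $\prod_{j=i+1}^{D}\phi_j/(\theta_i^*-\theta_j^*)$. The top-coefficient comparison then gives $c_{i+1}$ directly in the $\eta^*$ form. The same computation for $(A;\{E_{D-i}\}_{i=0}^D;A^*;\{E^*_{D-i}\}_{i=0}^D)$, whose first split sequence is $\varphi_D,\ldots,\varphi_1$, gives $b_i$ in the $\tau^*$ form.
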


\begin{lemma}\label{lem:end-split}
With reference to the primary parameter array,
\begin{equation}\label{eq:end-split}
	\varphi_1=\theta_0(\theta_1^*-\theta_0^*),
	\qquad
	\phi_1=\theta_D(\theta_1^*-\theta_0^*).
\end{equation}
In particular, $\theta_D\neq 0$.
\end{lemma}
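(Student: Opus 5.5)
The plan is to combine Lemma~\ref{lem:PA}(iii),(iv) with the formulas for $b_0$ and $c_1$ in Lemma~\ref{lem:intersection-formulas}, evaluated at $i=1$.

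For $\varphi_1$, we invoke \eqref{eq:b0-PA}. Since $b_0=k=\theta_0$, it gives $\varphi_1=b_0(\theta_1^*-\theta_0^*)=\theta_0(\theta_1^*-\theta_0^*)$ immediately.

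For $\phi_1$, we take Lemma~\ref{lem:PA}(iv) at $i=1$. Since $\upvartheta_1=1$, it reads
$$
\phi_1=\varphi_1+(\theta_1^*-\theta_0^*)(\theta_D-\theta_0).
$$
Substituting the expression for $\varphi_1$ just obtained, the $\theta_0(\theta_1^*-\theta_0^*)$ terms cancel, leaving $\phi_1=\theta_D(\theta_1^*-\theta_0^*)$. The same conclusion follows from Lemma~\ref{lem:split-difference-general} at $i=1$, which gives $\varphi_1-\phi_1=(\theta_1^*-\theta_0^*)(\theta_0-\theta_D)$; this can serve as a consistency check.

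For the final claim, Lemma~\ref{lem:PA}(ii) gives $\phi_1\neq 0$, because $\Phi$ is a Leonard system on the primary module. The dual eigenvalues are mutually distinct, so $\theta_1^*\neq\theta_0^*$. Hence $\theta_D\neq0$.

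None of these steps is a real obstacle. The one point to state explicitly is that the primary Leonard system has $d=D$, with eigenvalue sequence $\theta_0=k,\theta_1,\dots,\theta_D$ taken in the fixed $Q$-polynomial order. This justifies using $b_0=k=\theta_0$ and reading $\theta_D$ in Lemma~\ref{lem:PA}(iv) as the last eigenvalue in that ordering.
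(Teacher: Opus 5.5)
Your proof is correct and follows the paper's argument: you obtain $\varphi_1$ from \eqref{eq:b0-PA} with $b_0=\theta_0$, then $\phi_1$ from the $i=1$ case of Lemma~\ref{lem:PA}. The paper uses part (iii) and you use part (iv), but at $i=1$ both reduce to $\varphi_1-\phi_1=(\theta_1^*-\theta_0^*)(\theta_0-\theta_D)$. You then deduce $\theta_D\neq 0$ from $\phi_1\neq 0$ and $\theta_1^*\neq\theta_0^*$, exactly as the paper does; the only loose end is that the $c_1$ formula mentioned in your opening plan is never used.
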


\begin{proof}
By \eqref{eq:b0-PA} and $b_0 = \theta_0$, the first equation in \eqref{eq:end-split} follows.
Applying Lemma~\ref{lem:PA}(iii) with $i=1$ gives
$$
	\varphi_1 - \phi_1 = (\theta_1^* - \theta_0^*)(\theta_0 - \theta_D).
$$
This gives the second equation in \eqref{eq:end-split}. Since $\phi_1\neq 0$ and $\theta_1^* \neq \theta_0^*$, we have $\theta_D \neq 0$.
\end{proof}

\section{Geometric distance-regular graphs}\label{Sec:geomDRGs}
In this section, we recall some background concerning geometric distance-regular graphs and their geometric parameter sequences.
Recall that $\Gamma$ is a distance-regular graph with diameter $D\geq 3$ and valency $k$.
Let $\theta_{\min}$ denote the smallest eigenvalue of $\Gamma$.
Then $\theta_{\min} \leq -1$ by \cite[Corollary~3.5.4]{BCN}.
By \cite[Proposition~4.4.6]{BCN}, every clique in $\Gamma$ has size at most $1 - {k}/{\theta_{\min}}$.
A clique attaining this bound is called a \emph{Delsarte clique}.
The graph $\Gamma$ is called \emph{geometric} whenever there exists a collection $\mathcal{C}$ of Delsarte cliques such that every edge of $\Gamma$ is contained in a unique clique from $\mathcal{C}$ \cite{1993Godsil}. 
In this situation, the collection $\mathcal{C}$ is called a \emph{geometric structure} on $\Gamma$.

\smallskip
Assume that $\Gamma$ is geometric with respect to a fixed geometric structure $\mathcal{C}$. 
For $0 \leq i \leq D-1$, choose $x \in X$ and $C \in \mathcal{C}$ such that $\partial(x,C)=i$, where $\partial(x,C) = \min\{\partial(x,z)\mid z\in C\}$.
The number
\begin{equation*}\label{def:psi_i}
	\psi_i = | C \cap \Gamma_i(x) |
\end{equation*}
is independent of the choice of the pair $x$, $C$ and depends only on $i$ \cite[Section~4]{2007BHK}.
Note that $\psi_0=1$.
We next recall the parameters $\tau_i$ associated with $\mathcal{C}$. 
For $1 \leq i \leq D$, choose $x,y \in X$ such that $\partial(x,y)=i$. 
Consider the number of cliques $C \in \mathcal{C}$ such that $y\in C$ and $\partial(x,C)=i-1$.
By \cite[Lemma~4.1]{2007BHK}, this number is independent of the choice of $x$ and $y$ with $\partial(x,y)=i$, and depends only on $i$. 
We denote this common value by $\tau_i$.
For notational convenience, we set $\tau_0 = 0$.
We call the sequences $\{\psi_i\}_{i=0}^{D-1}$ and $\{\tau_i\}_{i=0}^{D}$ the \emph{geometric parameter sequences} of $\Gamma$ with respect to $\mathcal{C}$.

\begin{lemma}[{\cite[Section~9.3]{vDKT2016}}]\label{lem:vertex-C}
With the above notation, each vertex of $\Gamma$ is contained in exactly $-\theta_{\min}$ cliques from $\mathcal{C}$. 
In particular, $-\theta_{\min}$ is a positive integer.
\end{lemma}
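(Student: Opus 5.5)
We prove Lemma~\ref{lem:vertex-C} by double counting, using Delsarte clique properties. Let $\omega = 1-k/\theta_{\min}$ be the common size of the cliques in $\mathcal{C}$. Fix a vertex $x\in X$ and let $m_x$ be the number of cliques $C\in\mathcal{C}$ with $x\in C$. Each such clique contributes $\omega-1$ neighbours of $x$. Every edge lies in exactly one member of $\mathcal{C}$, so every neighbour $y$ of $x$ is counted exactly once, via the unique clique through the edge $xy$. Hence
\begin{equation*}
	m_x(\omega-1)=k .
\end{equation*}

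Next we substitute the clique size. We have $\omega-1=-k/\theta_{\min}$, which is nonzero because $k>0$ and $\theta_{\min}\leq -1$. The displayed identity then gives $m_x=k/(\omega-1)=-\theta_{\min}$, independently of $x$.

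Finally, $m_x$ counts cliques, so it is a nonnegative integer. Since $\Gamma$ is connected with $k\geq 1$, every vertex lies on an edge and therefore in at least one clique, so $m_x\geq 1$. It follows that $-\theta_{\min}$ is a positive integer.

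The only point needing care is that every neighbour is counted exactly once. This comes directly from the defining property of a geometric structure: two distinct cliques through $x$ cannot share another vertex $y$, since otherwise the edge $xy$ would lie in two members of $\mathcal{C}$. No further obstacle is expected.
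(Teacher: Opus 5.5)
Your double-counting proof is correct. The sets $C\setminus\{x\}$ for $x\in C\in\mathcal{C}$ partition $\Gamma(x)$, so $m_x(\omega-1)=k$ and hence $m_x=-\theta_{\min}$; this is the standard argument behind the result. The paper itself gives no proof here and only cites the survey of van Dam, Koolen, and Tanaka (Section~9.3). Positivity also follows directly from $\theta_{\min}\le -1$, so your separate connectivity argument for $m_x\ge 1$ is redundant, though harmless.
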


\begin{lemma}[{\cite[Proposition~4.2]{2007BHK}}]\label{lem:Prop4.2BHK}
With the above notation,
\begin{alignat}{2}
	b_i & = - (\theta_{\min} + \tau_i) \left(1 - \frac{k}{\theta_{\min}} - \psi_i \right) &\qquad \quad& (0 \leq i \leq D-1), \label{eq:b_i(1)} \\ 
	c_i & = \tau_i \psi_{i-1} &\qquad \quad & (1 \leq i \leq D). \label{eq:c_i(1)}
\end{alignat}
\end{lemma}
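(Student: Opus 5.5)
The plan is to prove both identities by double counting edges incident with a fixed vertex. I will use three facts: every edge lies in a unique clique of $\mathcal{C}$; every clique of $\mathcal{C}$ has size $1-k/\theta_{\min}$; and, by Lemma~\ref{lem:vertex-C}, each vertex lies in exactly $-\theta_{\min}$ cliques of $\mathcal{C}$. A basic observation used throughout is that two vertices of a clique are adjacent. Hence, for a vertex $x$, the distances from $x$ to the vertices of a clique $C$ differ by at most $1$, so they take the values $\partial(x,C)$ and possibly $\partial(x,C)+1$. I also take as given, from \cite[Section~4]{2007BHK}, that $\psi_i$ and $\tau_i$ are well defined.

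For \eqref{eq:c_i(1)}, fix $x,y$ with $\partial(x,y)=i$ and $1\le i\le D$. I will count the set $\Gamma(y)\cap\Gamma_{i-1}(x)$, which has $c_i$ elements, according to the clique of $\mathcal{C}$ containing the edge $yz$.
\begin{itemize}
\item Take $z$ in this set, and let $C$ be the unique clique of $\mathcal{C}$ containing the edge $yz$. Then $C$ contains $y$, and $\partial(x,C)=i-1$: the value $i-1$ is attained at $z$, and no vertex of $C$ can be nearer to $x$, because it is adjacent to $y$, which is at distance $i$.
\item Conversely, let $C$ be one of the $\tau_i$ cliques containing $y$ with $\partial(x,C)=i-1$. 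Then $C$ has exactly $\psi_{i-1}$ vertices at distance $i-1$ from $x$, and each of them is adjacent to $y$.
\item Distinct cliques through $y$ share only the vertex $y$, since each edge lies in a unique clique. So no vertex $z$ is counted twice.
\end{itemize}
Summing over the $\tau_i$ cliques gives $c_i=\tau_i\psi_{i-1}$.

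For \eqref{eq:b_i(1)}, fix $x,y$ with $\partial(x,y)=i$ and $0\le i\le D-1$. The neighbours of $y$ are partitioned according to which of the $-\theta_{\min}$ cliques through $y$ contains the corresponding edge. Each clique $C$ through $y$ satisfies $\partial(x,C)\in\{i-1,i\}$.
\begin{itemize}
\item There are $\tau_i$ cliques with $\partial(x,C)=i-1$ (when $i=0$ there are none, consistent with $\tau_0=0$). The vertices of such a clique are at distance $i-1$ or $i$ from $x$, so they contribute no vertices of $\Gamma_{i+1}(x)$.
\item The remaining $-\theta_{\min}-\tau_i$ cliques have $\partial(x,C)=i$. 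Each contains exactly $\psi_i$ vertices at distance $i$ from $x$; its other $1-k/\theta_{\min}-\psi_i$ vertices are at distance $i+1$.
\end{itemize}
Summing over these cliques gives $b_i=-(\theta_{\min}+\tau_i)(1-k/\theta_{\min}-\psi_i)$. As a check, for $i=0$ this reduces to $b_0=k$.

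No step is a genuine obstacle. The only points that need care are the claim that cliques through $y$ meet only in $y$, which makes the partitions disjoint, and the restriction that a clique at distance $i-1$ contributes nothing to $b_i$. Both follow from the fact that adjacent vertices have distances to $x$ differing by at most one.
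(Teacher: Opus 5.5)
Your proof is correct: partitioning the neighbours of $y$ according to the unique clique of $\mathcal{C}$ through each edge at $y$, and using that distances to $x$ within a clique vary by at most one, is the standard double-counting argument behind \cite[Proposition~4.2]{2007BHK}, which the paper cites rather than reproves. The two points you flag are both handled correctly: cliques through $y$ meet only in $y$, and a clique at distance $i-1$ from $x$ has no vertex in $\Gamma_{i+1}(x)$.
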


\begin{corollary}\label{cor:psi-tau}
With reference to Lemma~\ref{lem:Prop4.2BHK}, we have
\begin{alignat*}{2}
	\psi_i & = 1 - \frac{k}{\theta_{\min}} + \frac{b_i}{\theta_{\min} + \tau_i} & \qquad \quad & (0 \leq i \leq D-1), \\
	\tau_i & = \frac{c_i}{\psi_{i-1}} & \qquad \quad & (1 \leq i \leq D).
\end{alignat*}
\end{corollary}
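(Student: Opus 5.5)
The corollary is obtained by solving the two identities of Lemma~\ref{lem:Prop4.2BHK} for $\psi_i$ and $\tau_i$. The only point needing care is that the divisions involved are legitimate, i.e.\ that $\psi_{i-1}\neq 0$ and $\theta_{\min}+\tau_i\neq 0$.

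The formula for $\tau_i$ comes from \eqref{eq:c_i(1)}, $c_i=\tau_i\psi_{i-1}$, once we know $\psi_{i-1}\neq 0$. This holds because, for $1\le i\le D$, $\psi_{i-1}=|C\cap\Gamma_{i-1}(x)|$ for a pair with $\partial(x,C)=i-1$, and by definition of $\partial(x,C)$ some vertex of $C$ is at distance exactly $i-1$ from $x$. Hence $\psi_{i-1}\geq 1$, and dividing gives $\tau_i=c_i/\psi_{i-1}$.

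For the formula for $\psi_i$, rewrite \eqref{eq:b_i(1)} as
\[
1-\frac{k}{\theta_{\min}}-\psi_i=-\frac{b_i}{\theta_{\min}+\tau_i},
\]
which requires $\theta_{\min}+\tau_i\neq 0$ for $0\le i\le D-1$. I would argue directly from \eqref{eq:b_i(1)}: since $b_i>0$ for $0\le i\le D-1$, the product $-(\theta_{\min}+\tau_i)(1-k/\theta_{\min}-\psi_i)$ is nonzero, so neither factor vanishes. Rearranging then gives the stated expression for $\psi_i$. The case $i=0$ is consistent: there $\tau_0=0$, and we get $\psi_0=1-k/\theta_{\min}+k/\theta_{\min}=1$.

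No step is a real obstacle. The only things to check are the two nonvanishing conditions, and both follow immediately from the positivity of $b_i$ and from the definition of $\partial(x,C)$.
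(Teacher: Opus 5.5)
Your proof is correct and follows the paper's argument: $\psi_{i-1}>0$ comes from the definition of $\partial(x,C)$, $\theta_{\min}+\tau_i\neq 0$ comes from $b_i>0$ in \eqref{eq:b_i(1)}, and the formulas then follow by rearranging \eqref{eq:b_i(1)} and \eqref{eq:c_i(1)}. Your consistency check at $i=0$ is an extra step the paper omits.
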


\begin{proof}
For $1\leq i\leq D$, the definition of $\psi_{i-1}$ gives $\psi_{i-1}>0$. 
For $0\leq i\leq D-1$, we have $b_i>0$, so \eqref{eq:b_i(1)} implies $\theta_{\min}+\tau_i\neq0$.
The result now follows from \eqref{eq:b_i(1)} and \eqref{eq:c_i(1)}.
\end{proof}

It follows from Corollary~\ref{cor:psi-tau} that the geometric parameter sequences are recursively determined by the intersection numbers $b_i$ and $c_i$ together with $\theta_{\min}$.
In particular, the geometric parameter sequences of $\Gamma$ are independent of the choice of the geometric structure $\mathcal{C}$.

\section{Geometric parameter sequences associated with an eigenvalue}\label{Sec:geom-parameters}
In this section, motivated by the recursive description in Corollary~\ref{cor:psi-tau}, we extend the defining relations for the geometric parameter sequences to a distance-regular graph and a chosen eigenvalue. 
We continue to discuss the distance-regular graph $\Gamma$.
No geometric structure is assumed.
Let $\theta$ be an eigenvalue of $\Gamma$ such that $\theta \neq 0$ and $\theta \neq k$.
Set
\begin{equation*}
	\omega_\theta=1-\frac{k}{\theta}.
\end{equation*}
Define the two sequences $\{\widehat{\psi}_i(\theta)\}_{i=0}^{D-1}$ and $\{\widehat{\tau}_i(\theta)\}_{i=0}^{D}$ recursively as follows.
Set
\begin{equation}\label{def:initial_up}
	\widehat{\psi}_0(\theta)=1,\qquad \widehat{\tau}_0(\theta)=0.
\end{equation}
For $1\leq i\leq D$, define
\begin{equation}\label{def:uptau}
	\widehat{\tau}_i(\theta)=\frac{c_i}{\widehat{\psi}_{i-1}(\theta)},
\end{equation}
provided that $\widehat{\psi}_{i-1}(\theta) \neq 0$.
For $1\leq i\leq D-1$, define
\begin{equation}\label{def:uppsi}
	 \widehat{\psi}_i(\theta)=
	 \omega_\theta+\frac{b_i}{\theta+\widehat{\tau}_i(\theta)},
\end{equation}
provided that $\theta+\widehat{\tau}_i(\theta) \neq 0$.
If the recursive construction \eqref{def:initial_up}--\eqref{def:uppsi} is well defined, then we call $\{\widehat{\psi}_i(\theta)\}_{i=0}^{D-1}$ and $\{\widehat{\tau}_i(\theta)\}_{i=0}^{D}$ the \emph{geometric parameter sequences associated with $\theta$}.

\begin{remark}\label{rem:geometric-parameters}
The above terminology is motivated by the geometric case.
Suppose that $\Gamma$ is geometric and $\theta=\theta_{\min}$.
Then Corollary~\ref{cor:psi-tau} shows that the recursive construction is well defined.
Moreover, $\omega_{\theta}$ is the size of a Delsarte clique, and
$$
	\widehat{\psi}_i(\theta)=\psi_i \qquad (0\leq i\leq D-1),
	\qquad
	\widehat{\tau}_i(\theta)=\tau_i \qquad (0\leq i\leq D).
$$
That is, $\{\widehat{\psi}_i(\theta)\}_{i=0}^{D-1}$ and $\{\widehat{\tau}_i(\theta)\}_{i=0}^{D}$ are the geometric parameter sequences of $\Gamma$.
\end{remark}

We now characterize when the recursive construction is well defined in terms of the standard sequence corresponding to $\theta$.

\begin{lemma}\label{lem:spectral-nondegeneracy}
Let $\{u_i(\theta)\}_{i=0}^D$ denote the standard sequence corresponding to $\theta$.
The recursive construction \eqref{def:initial_up}--\eqref{def:uppsi} is well defined if and only if
\begin{equation}\label{eq:spectral-nondegeneracy}
	 u_i(\theta) \neq 0, \qquad u_i(\theta)-u_{i-1}(\theta) \neq 0 \qquad (1\leq i\leq D).
\end{equation}
When these conditions hold,
\begin{alignat}{2}
	 \widehat{\psi}_i(\theta) & = \omega_\theta \frac{u_{i+1}(\theta)}{u_{i+1}(\theta)-u_i(\theta)}
	 & \qquad &(0\leq i\leq D-1),\label{eq:psi-standard}\\
	 \widehat{\tau}_i(\theta) & = \frac{c_i\bigl(u_i(\theta)-u_{i-1}(\theta)\bigr)} {\omega_\theta u_i(\theta)}
	 & \qquad &(1\leq i\leq D).\label{eq:tau-standard}
\end{alignat}
\end{lemma}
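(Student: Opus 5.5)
The plan is an induction on $i$ that carries the closed forms \eqref{eq:psi-standard} and \eqref{eq:tau-standard} along with the well-definedness, using the three-term recurrence \eqref{eq:standard-sequence} at each step. Throughout write $u_i=u_i(\theta)$ and $\omega=\omega_\theta$. Since $u_0=1$, $u_1=\theta/k$ and $\omega=1-k/\theta$, we have
\[
u_1-u_0=\frac{\theta-k}{k},\qquad \frac{u_1}{u_1-u_0}=\frac{\theta}{\theta-k}=\frac{1}{\omega}.
\]
Hence $\omega u_1/(u_1-u_0)=1=\widehat\psi_0$. The case $i=1$ of \eqref{eq:spectral-nondegeneracy} holds automatically, because $\theta\neq0$ and $\theta\neq k$.

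\textbf{Inductive step.} Suppose $\widehat\psi_{i-1}$ is defined and equals $\omega u_i/(u_i-u_{i-1})$, with $u_i\neq0$ and $u_i-u_{i-1}\neq0$.

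First, $\widehat\psi_{i-1}\neq0$ holds exactly because $u_i\neq0$ and $\omega\neq0$. So $\widehat\tau_i$ is defined and equals the right-hand side of \eqref{eq:tau-standard}.

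Next, for $i\leq D-1$ I compute $\theta+\widehat\tau_i$. Multiply by $\omega u_i$ and use $\omega\theta=\theta-k$ and $k=a_i+b_i+c_i$:
\[
\omega u_i(\theta+\widehat\tau_i)=(\theta-k)u_i+c_iu_i-c_iu_{i-1}=(\theta-a_i-b_i)u_i-c_iu_{i-1}.
\]
By \eqref{eq:standard-sequence} this equals $b_iu_{i+1}-b_iu_i=b_i(u_{i+1}-u_i)$. Thus
\[
\theta+\widehat\tau_i=\frac{b_i(u_{i+1}-u_i)}{\omega u_i}.
\]
Since $b_i>0$, this is nonzero exactly when $u_{i+1}\neq u_i$. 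In that case
\[
\widehat\psi_i=\omega+\frac{\omega u_i}{u_{i+1}-u_i}=\frac{\omega u_{i+1}}{u_{i+1}-u_i},
\]
which is \eqref{eq:psi-standard}. Then $\widehat\psi_i\neq0$ exactly when $u_{i+1}\neq0$, which lets the induction continue.

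\textbf{Both directions.} The induction shows that the recursion proceeds past step $i$ exactly when the next pair of conditions in \eqref{eq:spectral-nondegeneracy} holds. Concretely:
\begin{itemize}
\item the denominator $\theta+\widehat\tau_i$ in \eqref{def:uppsi} vanishes iff $u_{i+1}=u_i$, for $1\leq i\leq D-1$;
\item the denominator $\widehat\psi_{i-1}$ in \eqref{def:uptau} vanishes iff $u_i=0$, for $2\leq i\leq D$.
\end{itemize}
So well-definedness is equivalent to $u_i\neq u_{i-1}$ for $2\leq i\leq D$ together with $u_i\neq0$ for $2\leq i\leq D$. Adding the automatic case $i=1$ gives exactly \eqref{eq:spectral-nondegeneracy}.

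For the "only if" direction, take the first index at which a condition fails. All earlier quantities are then defined and given by the formulas, so the corresponding denominator is zero and the recursion breaks.

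The only delicate point is the index bookkeeping at the ends. At $i=D$ one defines $\widehat\tau_D$ but not $\widehat\psi_D$, so the condition $u_D\neq0$ is still needed, via $\widehat\psi_{D-1}$. The condition $u_D\neq u_{D-1}$ enters through $\theta+\widehat\tau_{D-1}$. The recurrence \eqref{eq:standard-sequence} is used only for $i\leq D-1$, where the $b_iu_{i+1}$ term is present.
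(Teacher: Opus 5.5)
Your proof is correct and follows essentially the same route as the paper: an induction that carries the closed forms along, with the key identity $\theta+\widehat{\tau}_i(\theta)=b_i(u_{i+1}-u_i)/(\omega_\theta u_i)$ derived from \eqref{eq:standard-sequence} and $a_i+b_i+c_i=k$. This identity gives $\widehat{\psi}_{i-1}(\theta)\neq 0 \iff u_i\neq 0$ and $\theta+\widehat{\tau}_i(\theta)\neq 0 \iff u_{i+1}\neq u_i$. Your explicit first-failing-condition argument for the ``only if'' direction is a slightly more careful version of the paper's ``proceeding successively from $i=1$.''
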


\begin{proof}
Abbreviate $u_i=u_i(\theta)$.
Since $u_0=1$, $u_1=\theta/k$, and $\omega_\theta=1-k/\theta$, we have
$$
	\omega_\theta\frac{u_1}{u_1-u_0}=1=\widehat{\psi}_0(\theta).
$$
For $1\leq i\leq D$, suppose that the recursive construction has reached the step at which $\widehat{\tau}_i(\theta)$ is to be defined, and that \eqref{eq:psi-standard} holds with $i$ replaced by $i-1$.
Then $\widehat{\tau}_i(\theta)$ is defined if and only if $u_i\neq0$.
In this case,
$$
	\widehat{\tau}_i(\theta)
	=\frac{c_i}{\widehat{\psi}_{i-1}(\theta)}
	=\frac{c_i(u_i-u_{i-1})}{\omega_\theta u_i}.
$$
For $1\leq i\leq D-1$, the recurrence \eqref{eq:standard-sequence} and the identity $a_i+b_i+c_i=k$ give
$$
	\theta+\widehat{\tau}_i(\theta)
	=\frac{\theta\omega_\theta u_i+c_i(u_i-u_{i-1})}{\omega_\theta u_i}
	=\frac{b_i(u_{i+1}-u_i)}{\omega_\theta u_i}.
$$
Consequently, $\widehat{\psi}_i(\theta)$ is defined if and only if $u_{i+1}-u_i\neq0$.
In this case,
$$
	\widehat{\psi}_i(\theta)
	=\omega_\theta+\frac{\omega_\theta u_i}{u_{i+1}-u_i}
	=\omega_\theta\frac{u_{i+1}}{u_{i+1}-u_i}.
$$
Since $u_1\neq0$ and $u_1-u_0\neq0$ by the assumptions on $\theta$, proceeding successively from $i=1$ shows that the recursive construction is well defined if and only if \eqref{eq:spectral-nondegeneracy} holds.
In this case, \eqref{eq:psi-standard} and \eqref{eq:tau-standard} hold.
The result follows.
\end{proof}

Consider the tridiagonal matrix of intersection numbers:
\begin{equation}\label{eq:intersection-matrix}
	\mathcal{B}=
	\begin{pmatrix}
	a_0 & b_0 & & & \mathbf{0}\\
	c_1 & a_1 & b_1 & & \\
	& c_2 & a_2 & \ddots & \\
	& & \ddots & \ddots & b_{D-1}\\
	\mathbf{0} & & & c_D & a_D
\end{pmatrix}.
\end{equation}
By \eqref{eq:standard-sequence}, the vector
$(u_0(\theta),u_1(\theta),\ldots,u_D(\theta))^{\top}$ is an eigenvector of $\mathcal{B}$ corresponding to $\theta$.
We next give a bidiagonal factorization of $\mathcal{B} - \theta I$ in terms of the geometric parameter sequences associated with $\theta$.

\begin{lemma}\label{lem:bidiagonal-factorization}
Assume that the recursive construction \eqref{def:initial_up}--\eqref{def:uppsi} is well defined.
Abbreviate $\widehat{\psi}_i=\widehat{\psi}_i(\theta)$ and $\widehat{\tau}_i=\widehat{\tau}_i(\theta)$, and define
$$
	 L_\theta=
	 \begin{pmatrix}
	  -\theta & & & & \mathbf{0}\\
	  \widehat{\tau}_1 & -(\theta+\widehat{\tau}_1) & & & \\
	  & \widehat{\tau}_2 & -(\theta+\widehat{\tau}_2) & & \\
	  & & \ddots & \ddots & \\
	  \mathbf{0} & & & \widehat{\tau}_D & -(\theta+\widehat{\tau}_D)
	 \end{pmatrix}
$$
and
$$
	 U_\theta=
	 \begin{pmatrix}
	  \widehat{\psi}_0 & \omega_\theta-\widehat{\psi}_0 & & & \mathbf{0}\\
	  & \widehat{\psi}_1 & \omega_\theta-\widehat{\psi}_1 & & \\
	  & & \ddots & \ddots & \\
	  & & & \widehat{\psi}_{D-1} & \omega_\theta-\widehat{\psi}_{D-1}\\
	  \mathbf{0} & & & & \omega_\theta
	 \end{pmatrix}.
$$
Then
\begin{equation}\label{eq:bidiagonal-factorization}
	 \mathcal{B}-\theta I=L_\theta U_\theta.
\end{equation}
\end{lemma}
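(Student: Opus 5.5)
The plan is to compute the product $L_\theta U_\theta$ entrywise and compare it with $\mathcal{B}-\theta I$. Since $L_\theta$ is lower bidiagonal and $U_\theta$ is upper bidiagonal, the product is tridiagonal, so only the diagonal, superdiagonal and subdiagonal entries need checking. Index rows and columns by $0,1,\ldots,D$, and adopt the conventions $\widehat{\tau}_0=0$ and $\widehat{\psi}_D:=\omega_\theta$. With these conventions row $0$ of $L_\theta$ and row $D$ of $U_\theta$ fit the general pattern: row $i$ of $L_\theta$ has entries $\widehat{\tau}_i$ in column $i-1$ and $-(\theta+\widehat{\tau}_i)$ in column $i$, while row $i$ of $U_\theta$ has entries $\widehat{\psi}_i$ in column $i$ and $\omega_\theta-\widehat{\psi}_i$ in column $i+1$.

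\textbf{Subdiagonal.} For $1\leq i\leq D$, the $(i,i-1)$ entry is $\widehat{\tau}_i\widehat{\psi}_{i-1}$. By \eqref{def:uptau} this equals $c_i$, which is the correct entry of $\mathcal{B}-\theta I$.

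\textbf{Superdiagonal.} For $0\leq i\leq D-1$, the $(i,i+1)$ entry is $-(\theta+\widehat{\tau}_i)(\omega_\theta-\widehat{\psi}_i)$.
\begin{itemize}
\item For $1\leq i\leq D-1$, \eqref{def:uppsi} gives $\omega_\theta-\widehat{\psi}_i=-b_i/(\theta+\widehat{\tau}_i)$, so the entry is $b_i$.
\item For $i=0$, the entry is $-\theta(\omega_\theta-1)=-\theta\cdot(-k/\theta)=k=b_0$.
\end{itemize}

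\textbf{Diagonal.} The $(i,i)$ entry is $\widehat{\tau}_i(\omega_\theta-\widehat{\psi}_{i-1})-(\theta+\widehat{\tau}_i)\widehat{\psi}_i$, where the first term is absent for $i=0$. Expanding and using $\widehat{\tau}_i\widehat{\psi}_{i-1}=c_i$, this becomes
$$
\widehat{\tau}_i\omega_\theta-c_i-(\theta+\widehat{\tau}_i)\widehat{\psi}_i .
$$
Next write $(\theta+\widehat{\tau}_i)\widehat{\psi}_i=(\theta+\widehat{\tau}_i)\omega_\theta-(\theta+\widehat{\tau}_i)(\omega_\theta-\widehat{\psi}_i)$. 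By the superdiagonal computation the last product is $-b_i$ for $0\leq i\leq D-1$. For $i=D$ it is $0=b_D$, because $\widehat{\psi}_D=\omega_\theta$. Hence the diagonal entry equals
$$
\widehat{\tau}_i\omega_\theta-c_i-\theta\omega_\theta-\widehat{\tau}_i\omega_\theta-b_i=-\theta\omega_\theta-b_i-c_i=-(\theta-k)-b_i-c_i .
$$
Using $a_i+b_i+c_i=k$, this is $a_i-\theta$, as required. The case $i=0$ is covered by the same formula with $c_0=0$ and $\widehat{\tau}_0=0$: the entry is $-\theta\cdot 1=-\theta=a_0-\theta$.

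The main obstacle is only bookkeeping: making sure the boundary rows ($i=0$ and $i=D$) are handled consistently. The conventions above do exactly that, so the argument reduces to the three short identities.
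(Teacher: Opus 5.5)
Your proof is correct and follows essentially the same route as the paper: an entrywise comparison of the tridiagonal product. Both use $\widehat{\tau}_i\widehat{\psi}_{i-1}=c_i$ and $(\theta+\widehat{\tau}_i)(\widehat{\psi}_i-\omega_\theta)=b_i$ (with the $i=0$ case via $\theta(1-\omega_\theta)=k$), together with $\theta\omega_\theta=\theta-k$ and $a_i+b_i+c_i=k$. Your convention $\widehat{\psi}_D:=\omega_\theta$ is a tidy way to absorb the $D$-th diagonal entry into the general computation; the paper instead checks that entry separately using $a_D+c_D=k$.
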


\begin{proof}
By the defining recursion,
$$
	 \widehat{\tau}_i\widehat{\psi}_{i-1} = c_i \ (1\leq i\leq D)
	\qquad
	\text{and}
	\qquad
	(\theta+\widehat{\tau}_i)(\widehat{\psi}_i-\omega_\theta)=b_i \ (1\leq i\leq D-1).
$$
For $i=0$, the second identity follows from $\widehat{\tau}_0=0$, $\widehat{\psi}_0=1$, and $\theta(1-\omega_\theta)=k=b_0$.
Therefore, the subdiagonal and superdiagonal entries of $L_\theta U_\theta$ are $c_i$ and $b_i$, respectively.
For $1\leq i\leq D-1$, the $i$-th diagonal entry of $L_\theta U_\theta$ is
$$
	\widehat{\tau}_i(\omega_\theta-\widehat{\psi}_{i-1}) - (\theta+\widehat{\tau}_i)\widehat{\psi}_i
	= -c_i-b_i-\theta\omega_\theta
	= a_i-\theta.
$$
The same conclusion holds for the $0$-th and $D$-th diagonal entries of $L_\theta U_\theta$ by $a_0=0$, $a_D+c_D=k$, and $\theta\omega_\theta=\theta-k$.
The result follows.
\end{proof}

\begin{corollary}\label{cor:general-end-parameters}
Assume that the recursive construction \eqref{def:initial_up}--\eqref{def:uppsi} is well defined.
Then
\begin{equation}\label{eq:general-end-parameters}
 \widehat{\tau}_D(\theta)=-\theta,
 \qquad
 \widehat{\psi}_{D-1}(\theta)=-\frac{c_D}{\theta}.
\end{equation}
\end{corollary}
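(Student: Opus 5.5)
The plan is to read both identities off the last row of the factorization in Lemma~\ref{lem:bidiagonal-factorization}, using the fact that $\theta$ is an eigenvalue of $\mathcal{B}$. Since $(u_0(\theta),\ldots,u_D(\theta))^{\top}$ is an eigenvector of $\mathcal{B}$ for $\theta$, the matrix $\mathcal{B}-\theta I$ is singular. By \eqref{eq:bidiagonal-factorization}, $\det L_\theta\det U_\theta=0$.

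The determinants are products of diagonal entries:
$$
\det U_\theta=\omega_\theta\prod_{i=0}^{D-1}\widehat{\psi}_i(\theta),
\qquad
\det L_\theta=(-\theta)\prod_{i=1}^{D}\bigl(-(\theta+\widehat{\tau}_i(\theta))\bigr).
$$
We now check which factors can vanish.
\begin{itemize}
\item By Lemma~\ref{lem:spectral-nondegeneracy}, each $\widehat{\psi}_i(\theta)$ with $0\le i\le D-1$ is nonzero, since $u_{i+1}(\theta)\neq0$.
\item $\omega_\theta\neq0$ because $\theta\neq k$, and $\theta\neq0$ by hypothesis.
\item For $1\le i\le D-1$, the proof of Lemma~\ref{lem:spectral-nondegeneracy} gives $\theta+\widehat{\tau}_i(\theta)=b_i(u_{i+1}-u_i)/(\omega_\theta u_i)$. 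This is nonzero because $b_i>0$ and the nondegeneracy conditions hold.
\end{itemize}
The only factor left is $\theta+\widehat{\tau}_D(\theta)$, so it must vanish. This gives $\widehat{\tau}_D(\theta)=-\theta$.

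For the second identity, use \eqref{def:uptau} with $i=D$: $\widehat{\psi}_{D-1}(\theta)=c_D/\widehat{\tau}_D(\theta)=-c_D/\theta$.

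As a cross-check, one could instead use \eqref{eq:tau-standard}. The last row of \eqref{eq:standard-sequence} is $c_Du_{D-1}+a_Du_D=\theta u_D$, and $a_D=k-c_D$. These give $c_D(u_D-u_{D-1})=(k-\theta)u_D=-\theta\omega_\theta u_D$, hence again $\widehat{\tau}_D=-\theta$.

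The only delicate step is confirming that every other diagonal factor is nonzero, and Lemma~\ref{lem:spectral-nondegeneracy} already supplies that.
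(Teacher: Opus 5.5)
Your proposal is correct and follows the paper's proof. The singularity of $\mathcal{B}-\theta I$ and the factorization $\mathcal{B}-\theta I=L_\theta U_\theta$ force the only possibly vanishing diagonal factor, $\theta+\widehat{\tau}_D(\theta)$, to be zero, and then $c_D=\widehat{\tau}_D(\theta)\widehat{\psi}_{D-1}(\theta)$ gives the second identity. The paper reads the nonvanishing of the other diagonal factors directly from the well-definedness of the recursion rather than through Lemma~\ref{lem:spectral-nondegeneracy}, and your cross-check via the last row of \eqref{eq:standard-sequence} is a valid, shorter independent argument.
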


\begin{proof}
Since $\theta$ is an eigenvalue of $\mathcal{B}$, $\det(\mathcal{B}-\theta I)=0$.
Every diagonal entry of $U_\theta$ is nonzero, since the recursive construction is well defined and $\omega_\theta\neq0$.
Moreover, every diagonal entry of $L_\theta$ except possibly its $(D,D)$ entry is nonzero.
Therefore, \eqref{eq:bidiagonal-factorization} gives $\theta+\widehat{\tau}_D(\theta)=0$.
The second equation in \eqref{eq:general-end-parameters} follows from $c_D=\widehat{\tau}_D(\theta)\widehat{\psi}_{D-1}(\theta)$.
The result follows.
\end{proof}

\begin{corollary}\label{cor:smallest-eigenvalue-recursion}
Suppose that $\theta=\theta_{\min}$, where $\theta_{\min}$ is the smallest eigenvalue of $\Gamma$.
Then the recursive construction \eqref{def:initial_up}--\eqref{def:uppsi} is well defined.
Moreover, we have
\begin{alignat*}{2}
	0 < & \ \widehat{\psi}_i(\theta) \, < \omega_\theta & \qquad & (0\leq i\leq D-1), \\
	0 < & \ \widehat{\tau}_i(\theta) \ < -\theta & \qquad & (1\leq i\leq D-1), \\
	 & \ \widehat{\tau}_D(\theta) = -\theta.
\end{alignat*}
\end{corollary}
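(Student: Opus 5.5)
We will reduce everything to Lemma~\ref{lem:spectral-nondegeneracy} by establishing a sign pattern for the standard sequence $u_i=u_i(\theta_{\min})$. The classical fact we use is that for the smallest eigenvalue the standard sequence alternates in sign: $(-1)^i u_i>0$ for $0\leq i\leq D$ (cf.\ \cite[Section~4.1]{BCN}, where the eigenvalue $\theta_j$ in decreasing order has standard sequence with exactly $j$ sign changes). To keep the argument self-contained, we plan to derive it from the bidiagonal structure instead. Set $\theta=\theta_{\min}$. Then $\theta<0$, $\theta\neq k$, and $\omega_\theta=1-k/\theta>1$.

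We will run an induction directly on the recursion, proving at step $i$ that $0<\widehat{\psi}_{i-1}<\omega_\theta$ and $0<\widehat{\tau}_i$. The base case is $\widehat{\psi}_0=1\in(0,\omega_\theta)$. Given $\widehat{\psi}_{i-1}>0$, the quantity $\widehat{\tau}_i=c_i/\widehat{\psi}_{i-1}$ is defined and positive. The crux is to show $\theta+\widehat{\tau}_i<0$ for $i\leq D-1$. Then $\widehat{\psi}_i=\omega_\theta+b_i/(\theta+\widehat{\tau}_i)$ is defined and less than $\omega_\theta$. Since $(\theta+\widehat{\tau}_i)(\widehat{\psi}_i-\omega_\theta)=b_i$, it is also positive exactly when $\widehat{\tau}_i+\theta<-b_i/\omega_\theta$, which is a stronger requirement.

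To get these inequalities, we will use the interpretation through the standard sequence. By the computation in the proof of Lemma~\ref{lem:spectral-nondegeneracy}, $\widehat{\psi}_i=\omega_\theta u_{i+1}/(u_{i+1}-u_i)$. Under the alternating sign pattern, $u_{i+1}$ and $u_{i+1}-u_i$ have the same sign and $|u_{i+1}-u_i|>|u_{i+1}|$. This gives $0<\widehat{\psi}_i<\omega_\theta$ immediately, and it also ensures the nondegeneracy conditions \eqref{eq:spectral-nondegeneracy}. Similarly, $\widehat{\tau}_i=c_i(u_i-u_{i-1})/(\omega_\theta u_i)$, and here $(u_i-u_{i-1})/u_i>1$ gives positivity. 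Moreover,
$$\theta+\widehat{\tau}_i=\frac{b_i(u_{i+1}-u_i)}{\omega_\theta u_i}<0$$
for $i\leq D-1$, because $u_{i+1}-u_i$ and $u_i$ have opposite signs. Since $\widehat{\tau}_i>0$, this yields $0<\widehat{\tau}_i<-\theta$. Finally, $\widehat{\tau}_D=-\theta$ follows from Corollary~\ref{cor:general-end-parameters}.

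The main obstacle is justifying the alternation $(-1)^iu_i>0$. We plan to prove it by Sturm-sequence reasoning. Let $p_i$ be the characteristic polynomials of the leading $i\times i$ principal submatrices of the tridiagonal matrix $\mathcal{B}$; equivalently, these are the normalized polynomials satisfying $u_i(\lambda)\,b_0\cdots b_{i-1}=\pm p_i(\lambda)$ up to the standard three-term recurrence. The zeros of successive $p_i$ interlace, and $\theta_{\min}$ is the smallest zero of $p_{D+1}$, so it lies strictly below all zeros of every $p_i$ with $i\leq D$. Hence $p_i(\theta_{\min})$ has sign $(-1)^i$. Translating this sign through the positive normalization $b_0\cdots b_{i-1}$ gives the alternation of $u_i$. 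Alternatively, we could simply cite \cite[Proposition~4.1.1]{BCN} or the corresponding statement on sign changes of standard sequences.
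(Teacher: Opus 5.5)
Your proof is correct and is essentially the paper's argument. The alternation $u_{i-1}(\theta)u_i(\theta)<0$, which the paper simply cites from \cite[Corollary~4.1.2]{BCN}, gives the nondegeneracy conditions and the bounds on $\widehat{\psi}_i$ and $\widehat{\tau}_i$ via the formulas in Lemma~\ref{lem:spectral-nondegeneracy}, and $\widehat{\tau}_D=-\theta$ comes from Corollary~\ref{cor:general-end-parameters}. Your use of $\theta+\widehat{\tau}_i=b_i(u_{i+1}-u_i)/(\omega_\theta u_i)$ in place of the paper's $b_i=(\theta+\widehat{\tau}_i)(\widehat{\psi}_i-\omega_\theta)$ is an equivalent step.

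If you include the Sturm-sequence derivation, you need \emph{strict} interlacing. It holds because $b_{i-1}c_i>0$, so consecutive $p_i$ share no zero. You should also fix $p_i(\lambda)=\det(\lambda I-\mathcal{B}_i)$, with $\mathcal{B}_i$ the leading $i\times i$ principal submatrix, so that $b_0\cdots b_{i-1}u_i(\lambda)=p_i(\lambda)$ holds with a plus sign.
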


\begin{proof}
The standard sequence corresponding to $\theta$ has exactly
$D$ sign changes; see \cite[Corollary~4.1.2]{BCN}.
Since this sequence has $D+1$ entries, it follows that
$$
	u_{i-1}(\theta)u_i(\theta)<0
	\qquad (1\leq i\leq D).
$$
In particular, the conditions in \eqref{eq:spectral-nondegeneracy} hold, so the recursive construction is well defined by Lemma~\ref{lem:spectral-nondegeneracy}.
The formulas \eqref{eq:psi-standard} and \eqref{eq:tau-standard} give $0 < \widehat{\psi}_i(\theta) < \omega_\theta$ and $\widehat{\tau}_i(\theta) > 0$.
For $1 \leq i \leq D-1$, the defining relation
$$
	b_i =
	(\theta + \widehat{\tau}_i(\theta))
	(\widehat{\psi}_i(\theta) - \omega_\theta)
$$
then gives $\widehat{\tau}_i(\theta)<-\theta$.
The last assertion follows from Corollary~\ref{cor:general-end-parameters}.
\end{proof}

\begin{remark}
When $\Gamma$ is $Q$-polynomial, we will consider the geometric parameter sequences associated with $\theta_D$ for a fixed $Q$-polynomial ordering.
If $\Gamma$ admits more than one $Q$-polynomial ordering, then the resulting sequences may depend on the chosen ordering.
\end{remark}

\section{Recurrence and split sequence relations}\label{Sec:Rec-split}
In this section, we give some recurrence and split sequence relations that will be used in the next section.
We assume that $\Gamma$ is $Q$-polynomial.
Recall that we fixed a $Q$-polynomial ordering $E_0,E_1,\ldots,E_D$.
Recall the parameter array of $\Gamma$:
\begin{equation*}
	(\{\theta_i\}_{i=0}^D,
	 \{\theta_i^*\}_{i=0}^D,
	 \{\varphi_i\}_{i=1}^D,
	 \{\phi_i\}_{i=1}^D).
\end{equation*}
Recall that $\beta$ denotes the fundamental parameter of the primary Leonard system.
For $0 \leq i \leq D+1$, let $\vartheta_i$ denote the scalar defined in \eqref{sum:vartheta}, with $\uptheta_j=\theta_j$ and $d=D$.
By \eqref{eq:split-difference-general} and the definition of $\vartheta_i$,
\begin{equation}\label{eq:split-difference}
	\varphi_i-\phi_i
	=
	(\theta_i^*-\theta_{i-1}^*)
	(\theta_0-\theta_D)\vartheta_i
	\qquad (1\leq i\leq D).
\end{equation}

\begin{lemma}\label{lem:split-nonzero}
The following are equivalent.
\begin{itemize}
\item[\rm(i)] $\varphi_i\neq\phi_i$ for every $1\leq i\leq D$;
\item[\rm(ii)] $\beta\neq-2$ or $D$ is even.
\end{itemize}
Moreover, if $\beta=-2$ and $D$ is odd, then, for $1\leq i\leq D$,
we have $\varphi_i=\phi_i$ if and only if $i$ is even.
\end{lemma}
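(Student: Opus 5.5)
By \eqref{eq:split-difference}, for $1\leq i\leq D$ we have $\varphi_i-\phi_i=(\theta_i^*-\theta_{i-1}^*)(\theta_0-\theta_D)\vartheta_i$. The dual eigenvalues are mutually distinct, and so are the eigenvalues. Hence the first two factors are nonzero, and $\varphi_i=\phi_i$ if and only if $\vartheta_i=0$. The whole statement therefore reduces to deciding for which $1\leq i\leq D$ the sum $\vartheta_i$ vanishes. For this I will use the closed forms in Lemma~\ref{lem:theta-sum}, applied with $\uptheta_j=\theta_j$ and $d=D\geq 3$. This is legitimate because the $\theta_j$ are mutually distinct and $\beta$-recurrent by Lemma~\ref{lem:PA}(v).

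I split into the cases of Lemma~\ref{lem:theta-sum}.

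If $\beta=2$, then $\vartheta_i=i(D-i+1)/D$, which is nonzero for $1\leq i\leq D$ since the field has characteristic zero.

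If $\beta=-2$ and $D$ is even, then $\vartheta_i$ equals $i/D$ or $(D-i+1)/D$, and both are nonzero for $1\leq i\leq D$.

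If $\beta=-2$ and $D$ is odd, then $\vartheta_i$ is $0$ for even $i$ and $1$ for odd $i$. Since $D\geq 3$, the index $i=2$ lies in range, so (i) fails. This also gives the final assertion: $\varphi_i=\phi_i$ exactly when $i$ is even.

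The remaining case, $\beta\neq\pm2$, is the main point. Here
$$
\vartheta_i=\frac{(q^i-1)(q^{D-i+1}-1)}{(q-1)(q^D-1)} .
$$
The formula itself presupposes that the denominator is nonzero, i.e.\ $q^D\neq 1$ and $q\neq 1$. I would make this explicit by noting that distinctness of the $\theta_j$ forces $q^j\neq1$ for $1\leq j\leq D$. In the $q$-case, $\theta_j=\alpha+\mu q^j+\nu q^{-j}$ with $\mu\nu$ not both zero, and $\theta_0-\theta_D=0$ whenever $q^D=1$; one could alternatively quote this standard fact from \cite{2001TerLAA}. For $1\leq i\leq D$, both $i$ and $D-i+1$ lie in $\{1,\dots,D\}$, so neither numerator factor vanishes and $\vartheta_i\neq0$.

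Combining the cases: if $\beta\neq-2$ or $D$ is even, every $\vartheta_i$ is nonzero, so (i) holds. If $\beta=-2$ and $D$ is odd, (i) fails at $i=2$. This proves (i)$\Leftrightarrow$(ii) together with the parity description. The only delicate step is justifying $q^j\neq1$ for $1\leq j\leq D$ in the case $\beta\neq\pm2$, which I expect to handle by citing the nondegeneracy implicit in Lemma~\ref{lem:theta-sum}(i).
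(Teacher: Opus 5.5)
Your proposal is correct and follows the paper's proof: you reduce via \eqref{eq:split-difference} to the vanishing of $\vartheta_i$, then go through the cases of Lemma~\ref{lem:theta-sum}, with $q^j\neq 1$ $(1\leq j\leq D)$ in the case $\beta\neq\pm2$ supplied by \cite[Lemma~9.3(i)]{2001TerLAA}, exactly as the paper does. If you argue that fact directly instead of citing it, use that $q^j=1$ forces $\theta_j=\theta_0$ for each $1\leq j\leq D$, not just for $j=D$.
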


\begin{proof}
Since the scalars in each of $\{\theta_i\}_{i=0}^D$ and $\{\theta_i^*\}_{i=0}^D$ are mutually distinct, \eqref{eq:split-difference} gives
$$
	\varphi_i \neq \phi_i
	\quad \Longleftrightarrow \quad
	\vartheta_i \neq 0
	\qquad (1\leq i\leq D).
$$
Thus it remains to determine when $\vartheta_i$ is nonzero.
Recall that $\{\theta_i\}_{i=0}^D$ is $\beta$-recurrent.
We use Lemma~\ref{lem:theta-sum} and consider the following four cases.
Suppose first that $\beta \neq \pm 2$, and choose $q$ such that $q + q^{-1} = \beta$.
Since $\{\theta_i\}_{i=0}^D$ is mutually distinct, by \cite[Lemma~9.3(i)]{2001TerLAA} we have $q^j \neq 1$ for $1 \leq j \leq D$.
Therefore, \eqref{sum case(i)} gives $\vartheta_i \neq 0$ for $1 \leq i \leq D$.
Suppose next that $\beta=2$.
By \eqref{sum case(ii)},
$$
	\vartheta_i = \frac{i(D-i+1)}{D} \neq 0 \qquad (1\leq i\leq D).
$$
Suppose next that $\beta=-2$ and $D$ is even.
By \eqref{sum case(iv)}, $\vartheta_i$ is equal to $i/D$ if $i$ is even and $(D-i+1)/D$ if $i$ is odd.
Hence $\vartheta_i\neq0$ for $1\leq i\leq D$.
Finally, suppose that $\beta=-2$ and $D$ is odd.
By \eqref{sum case(iii)},
$$
	\vartheta_i=
	\begin{cases}
		0, & \text{if $i$ is even},\\
		1, & \text{if $i$ is odd}
	\end{cases}
	\qquad (1\leq i\leq D).
$$
The result follows.
\end{proof}

We next relate the standard sequence corresponding to $\theta_D$ to the split sequences of the primary Leonard system.

\begin{lemma}\label{lem:standard-split-product}
Let $\{u_i\}_{i=0}^D$ denote the standard sequence corresponding to $\theta_D$.
Then
\begin{equation}\label{eq:standard-split-product}
	u_i =
	\frac{\phi_1\phi_2\cdots\phi_i}
	     {\varphi_1\varphi_2\cdots\varphi_i}
	\qquad (0 \leq i \leq D).
\end{equation}
In particular, $u_i \neq 0$ for $0 \leq i \leq D$, and
\begin{equation}\label{eq:standard-split-ratio}
	\frac{u_{i-1}}{u_i}
	=
	\frac{\varphi_i}{\phi_i}
	\qquad (1 \leq i \leq D).
\end{equation}
\end{lemma}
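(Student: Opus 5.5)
We prove \eqref{eq:standard-split-product} by showing that the sequence on its right-hand side satisfies the defining recurrence of the standard sequence. Set $v_0=1$ and $v_i=\prod_{h=1}^{i}\phi_h/\varphi_h$ for $1\leq i\leq D$. This is well defined and nonzero, since all split parameters are nonzero by Lemma~\ref{lem:PA}(ii). Since $b_i>0$ for $0\le i\le D-1$, the recurrence \eqref{eq:standard-sequence} with $u_0=1$ and $u_1=\theta_D/k$ determines $\{u_i\}_{i=0}^D$ uniquely. So it suffices to check three things: $v_1=\theta_D/k$, the recurrence \eqref{eq:standard-sequence} at $1\le i\le D-1$ (which determines $u_2,\ldots,u_D$), and the equation at $i=D$. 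The base case follows from Lemma~\ref{lem:end-split}, since $v_1=\phi_1/\varphi_1=\theta_D/\theta_0=\theta_D/k$. Once $u_i=v_i$ is established, $u_i\neq0$ and \eqref{eq:standard-split-ratio} are immediate.

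Write $\rho_i=\varphi_i/\phi_i$, so that $v_{i-1}/v_i=\rho_i$. Using $a_i=k-b_i-c_i$ and $k=\theta_0$, the recurrence at index $i$ after dividing by $v_i$ becomes
\[
c_i(\rho_i-1)+b_i\bigl(\rho_{i+1}^{-1}-1\bigr)=\theta_D-\theta_0 \qquad (1\le i\le D-1),
\]
and at $i=D$ it becomes $c_D(\rho_D-1)=\theta_D-\theta_0$. We substitute \eqref{eq:split-difference}, which gives $\rho_i-1=(\theta_i^*-\theta_{i-1}^*)(\theta_0-\theta_D)\vartheta_i/\phi_i$ and $\rho_{i+1}^{-1}-1=-(\theta_{i+1}^*-\theta_i^*)(\theta_0-\theta_D)\vartheta_{i+1}/\varphi_{i+1}$. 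We also substitute the formulas of Lemma~\ref{lem:intersection-formulas} for $b_i$ and $c_i$. The split parameters $\phi_i$ and $\varphi_{i+1}$ then cancel.

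At $i=D$, \eqref{eq:cD-PA} together with $\vartheta_D=1$ gives $c_D(\rho_D-1)=-(\theta_0-\theta_D)$ directly. For $1\le i\le D-1$, after dividing by $\theta_D-\theta_0$ the condition reduces to the following identity in the $\theta_j^*$, $\vartheta_j$ and $\xi_j$ alone:
\[
\frac{\xi_{D-i}(\theta_i^*-\theta_D^*)\vartheta_i}{\theta^*_{i+1}-\theta^*_{i-1}}
+\frac{\xi_i(\theta_i^*-\theta_0^*)\vartheta_{i+1}}{\theta^*_{i+1}-\theta^*_{i-1}}=1 .
\]

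The main obstacle is verifying this identity. The quantities $\xi_j$ and $\vartheta_j$ depend only on $\beta$, $j$ and $D$, by Lemmas~\ref{lem:theta-sum} and \ref{lem:upxi-formulas}. The dual eigenvalues are $\beta$-recurrent, so they have the standard parametrization in each case: $\theta_j^*=\alpha+\mu q^j+\nu q^{-j}$ when $\beta\ne\pm2$, a quadratic in $j$ when $\beta=2$, and $\alpha+\mu j+\nu(-1)^j$-type forms when $\beta=-2$. I would substitute these and the closed forms of $\xi_j$ and $\vartheta_j$, and check the identity case by case.

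In the generic case $\beta\neq\pm2$, the identity is linear in $(\mu,\nu)$ after multiplying by $\theta^*_{i+1}-\theta^*_{i-1}$, with the additive constant $\alpha$ cancelling. So it suffices to check the coefficients of $\mu$ and of $\nu$ separately, which is a $q$-identity. The cases $\beta=2$ and $\beta=-2$ (with $D$ even or odd) should follow either by the same direct check or as limits of the generic case.

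If this computation becomes unwieldy, my fallback is the Leonard system $\Phi^{\Downarrow}$. One would identify $u_i(\theta_D)$ with $u_i(\theta_0)=1$ computed for $\Phi^{\Downarrow}$, rescaled by the ratio of the normalizations of $E_0^*$-components in the two split bases. Since the first split sequence of $\Phi$ is $\{\varphi_i\}$ and that of $\Phi^{\Downarrow}$ is $\{\phi_i\}$, comparing the two split bases should give the product $\prod_h\phi_h/\varphi_h$ directly.
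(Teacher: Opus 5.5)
Your plan is a genuinely different route from the paper, which computes nothing and simply quotes the Leonard-system identity $u_i(\theta_D)=\phi_1\cdots\phi_i/(\varphi_1\cdots\varphi_i)$ from Terwilliger. Your reduction to checking \eqref{eq:standard-sequence} for $v_i$, the base case via Lemma~\ref{lem:end-split}, and the check at $i=D$ are all correct. The gap is that the identity you reduce to for $1\le i\le D-1$ is false as written. In $c_i(\rho_i-1)$, the factor $\theta_i^*-\theta_{i-1}^*$ from \eqref{eq:split-difference} meets $\theta_{i-1}^*-\theta_i^*$ in the denominator of \eqref{eq:ci-PA}, and this produces a sign. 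So
\[
c_i(\rho_i-1)=\frac{\xi_{D-i}(\theta_i^*-\theta_D^*)(\theta_0-\theta_D)\vartheta_i}{\theta_{i+1}^*-\theta_{i-1}^*},
\qquad
b_i\bigl(\rho_{i+1}^{-1}-1\bigr)=-\frac{\xi_i(\theta_i^*-\theta_0^*)(\theta_0-\theta_D)\vartheta_{i+1}}{\theta_{i+1}^*-\theta_{i-1}^*}.
\]
After dividing by $\theta_D-\theta_0$, the correct target is
\[
\xi_{D-i}(\theta_D^*-\theta_i^*)\vartheta_i+\xi_i(\theta_i^*-\theta_0^*)\vartheta_{i+1}=\theta_{i+1}^*-\theta_{i-1}^*,
\]
with $\theta_D^*-\theta_i^*$, not $\theta_i^*-\theta_D^*$, in the first term. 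Your version already fails for the Hamming graphs. There $\beta=2$ and $\theta_j^*$ is affine in $j$, and the identity is invariant under affine changes of $\theta^*$, so you may take $\theta_j^*=j$, $\xi_i=2/(i(i+1))$, $\vartheta_i=i(D-i+1)/D$. Your left-hand side is then $(D-2i)/D$ rather than $1$, whereas the corrected one gives $i/D+(D-i)/D=1$. So the case-by-case check would not go through as you set it up.

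Apart from the sign, the decisive computation is only announced, and two of its announced ingredients need repair. First, for $\beta=-2$ the characteristic polynomial of the recurrence is $(x-1)(x+1)^2$. The general solution is therefore $\theta_j^*=\alpha+(\mu+\nu j)(-1)^j$, not $\alpha+\mu j+\nu(-1)^j$; with your form, the $j$-component would violate even the corrected identity. Second, ``limits of the generic case'' is not yet an argument, because both the parametrization of $\theta^*$ and the formulas for $\xi_j,\vartheta_j$ change at $\beta=\pm2$. Check those cases directly, including $\beta=-2$ with $D$ odd, which is exactly where Lemma~\ref{lem:exceptional-case} uses this lemma.

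With these repairs the computation does close. For $\beta\neq\pm2$, the coefficient of $\mu$ on the left is $(q^2-1)\bigl[q^{i-1}(q^{D-i}-1)+q^{D-1}(q^i-1)\bigr]/(q^D-1)=q^{i-1}(q^2-1)$, as required. The coefficient of $\nu$ and the cases $\beta=\pm2$ work out similarly.

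Your fallback is the cleaner route and is essentially what lies behind the citation. Let $p_i$ be the monic polynomials attached to $A$ and $\{E_i^*\}_{i=0}^D$; these are the same for $\Phi$ and $\Phi^{\Downarrow}$. Leonard-system theory gives $p_i(\theta_0)=\varphi_1\cdots\varphi_i/\prod_{\ell=0}^{i-1}(\theta_i^*-\theta_\ell^*)$. Applying this to $\Phi^{\Downarrow}$ gives the same formula for $p_i(\theta_D)$ with $\phi$ in place of $\varphi$. Then $u_i(\theta_D)=p_i(\theta_D)/p_i(\theta_0)$ yields the lemma at once.
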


\begin{proof}
Equation~\eqref{eq:standard-split-product} follows from \cite[Section~10]{2005TerDCC}.
Since $\varphi_j$ and $\phi_j$ are nonzero, each $u_i$ is nonzero.
Equation~\eqref{eq:standard-split-ratio} now follows from \eqref{eq:standard-split-product}.
\end{proof}

We conclude this section by treating the exceptional case $\beta=-2$ with $D$ odd.

\begin{lemma}\label{lem:exceptional-case}
Suppose that $\beta = -2$ and $D$ is odd.
Then $\theta_D = -1$, and the recursive construction \eqref{def:initial_up}--\eqref{def:uppsi} with $\theta = \theta_D$ fails at the step where $\widehat{\psi}_1(\theta_D)$ is to be defined.
\end{lemma}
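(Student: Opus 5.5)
Two things have to be shown: $\theta_D=-1$, and the recursion breaks down exactly when $\widehat{\psi}_1(\theta_D)$ is to be defined. We handle both through the standard sequence $\{u_i\}_{i=0}^D$ of $\theta_D$ and Lemma~\ref{lem:spectral-nondegeneracy}.

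\emph{Step 1: $u_2=u_1$.} By Lemma~\ref{lem:split-nonzero}, the hypotheses $\beta=-2$ and $D$ odd give $\varphi_2=\phi_2$. Then \eqref{eq:standard-split-ratio} with $i=2$ gives $u_1/u_2=1$, so $u_2=u_1$. Here $u_1\neq0$ by Lemma~\ref{lem:standard-split-product}.

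\emph{Step 2: $\theta_D=-1$.} We use the recurrence \eqref{eq:standard-sequence} at $i=1$, namely $c_1u_0+a_1u_1+b_1u_2=\theta_Du_1$. Substitute $c_1=1$, $u_0=1$, $u_2=u_1$, and $a_1+b_1=k-1$ to get
\[
1+(k-1)u_1=\theta_D u_1 .
\]
Since $u_1=\theta_D/k$, this becomes $k+(k-1)\theta_D=\theta_D^2$. Equivalently, $\theta_D^2-(k-1)\theta_D-k=0$, which factors as $(\theta_D-k)(\theta_D+1)=0$. Because $\theta_D\neq\theta_0=k$, we conclude $\theta_D=-1$.

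A possible alternative is to use Lemma~\ref{lem:end-split} together with Lemma~\ref{lem:PA}(iii),(iv) at $i=2$ and the value $\vartheta_2=0$. The route above through the standard sequence is shorter and self-contained.

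\emph{Step 3: where the recursion fails.} We have $\theta_D\neq0$ and $\theta_D\neq k$, so the construction applies. It starts with $\widehat{\psi}_0=1$ and $\widehat{\tau}_0=0$. Next, $\widehat{\tau}_1=c_1/\widehat{\psi}_0=1$ is defined, which is consistent with $u_1\neq0$. The proof of Lemma~\ref{lem:spectral-nondegeneracy} shows
\[
\theta+\widehat{\tau}_1=\frac{b_1(u_2-u_1)}{\omega_\theta u_1}.
\]
This is zero by Step~1. One can also check it directly: $\theta_D+\widehat{\tau}_1=-1+1=0$. So \eqref{def:uppsi} cannot be applied at $i=1$, and the construction fails exactly at the step defining $\widehat{\psi}_1(\theta_D)$.

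We expect no real obstacle. The only point needing care is Step~2, where the root $\theta_D=k$ must be discarded using $\theta_D\neq\theta_0$.
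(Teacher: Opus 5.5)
Your proposal is correct and follows essentially the same route as the paper. Lemma~\ref{lem:split-nonzero} gives $\varphi_2=\phi_2$, hence $u_2=u_1$; the recurrence at $i=1$ then yields $(\theta_D-k)(\theta_D+1)=0$, and $\widehat{\tau}_1(\theta_D)=1$ forces $\theta_D+\widehat{\tau}_1(\theta_D)=0$, so $\widehat{\psi}_1(\theta_D)$ cannot be defined.
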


\begin{proof}
By Lemma~\ref{lem:split-nonzero}, we have $\varphi_2 = \phi_2$.
Thus \eqref{eq:standard-split-product} gives $u_2 = u_1$, where $\{u_i\}_{i=0}^D$ is the standard sequence corresponding to $\theta_D$.
Since $u_0 = 1$, $u_1 = \theta_D/k$, $c_1 = 1$, and $a_1+b_1 = k-1$, evaluate \eqref{eq:standard-sequence} at $i = 1$ to obtain
$$
	(\theta_D-k)(\theta_D+1) = 0.
$$
Since $\theta_D \neq k$, it follows that $\theta_D = -1$.

\smallskip
The recursion starts with $\widehat{\psi}_0(\theta_D) = 1$, so $\widehat{\tau}_1(\theta_D) = c_1/\widehat{\psi}_0(\theta_D) = 1$.
Consequently,
$$
	\theta_D+\widehat{\tau}_1(\theta_D) = 0,
$$
and \eqref{def:uppsi} does not define $\widehat{\psi}_1(\theta_D)$.
\end{proof}

\noindent
The following example illustrates the exceptional case in Lemma~\ref{lem:exceptional-case}.

\begin{example}\label{ex:odd-graph}
Assume that $D \geq 3$ is odd, and let $\Gamma$ denote the Odd graph $O_{D+1}$; see \cite[Section~9.1D]{BCN}.
The vertices of $\Gamma$ are the $D$-subsets of a fixed $(2D+1)$-set, and two vertices are adjacent whenever they are disjoint.
Since $O_{D+1}$ is the Kneser graph $K(2D+1,D)$, its eigenvalues are
$$
	\theta_i = (-1)^i(D-i+1) \qquad (0 \leq i \leq D);
$$
see \cite[Theorem~9.4.3]{2001GodsilRoyle}.
By \cite[Example~6.1(2)]{1993TerJAC}, $\Gamma$ is $Q$-polynomial with respect to the ordering $\{\theta_i\}_{i=0}^D$.
Thus $\beta = -2$ and $\theta_D = -1$.
By Lemma~\ref{lem:exceptional-case}, the recursive construction associated with $\theta_D$ fails at the step where $\widehat{\psi}_1(\theta_D)$ is to be defined.
\end{example}

\section{Existence and closed formulas}\label{Sec:closed-form}

We continue to discuss the $Q$-polynomial distance-regular graph $\Gamma$.
Recall that we fixed a $Q$-polynomial ordering $\theta_0, \theta_1, \ldots, \theta_D$.
In this section, we characterize when the geometric parameter sequences associated with $\theta_D$ exist and give closed formulas for these sequences.

\smallskip
Set
\begin{equation}\label{eq:omega}
	\omega = \omega_{\theta_D} = 1 - \frac{\theta_0}{\theta_D}.
\end{equation}
By Lemma~\ref{lem:end-split}, $\theta_D \neq 0$, so $\omega$ is well defined.
Since $\theta_D \neq \theta_0 = k$, we have $\omega \neq 0$.
For the rest of the paper, whenever the geometric parameter sequences associated with $\theta_D$ exist, we write
$$
	\widehat{\psi}_i = \widehat{\psi}_i(\theta_D) \quad (0\leq i\leq D-1),
	\qquad
	\widehat{\tau}_i = \widehat{\tau}_i(\theta_D) \quad (0\leq i\leq D).
$$
Let $\{u_i\}_{i=0}^D$ denote the standard sequence corresponding to $\theta_D$, where $u_i=u_i(\theta_D)$.
Recall the parameter array of $\Gamma$ in \eqref{eq:primaryPA}.
Recall that $\beta$ denotes the fundamental parameter of the primary Leonard system.
Recall also that, for $1\leq i\leq D-1$, $\xi_i$ denotes the scalar defined in \eqref{eq:def-upxi} with $\uptheta_j=\theta_j$ and $d=D$.
We now give our main result.

\begin{theorem}\label{thm:closed-form}
With the above notation, the following are equivalent.
\begin{itemize}
	\item[\rm(i)] The geometric parameter sequences associated with $\theta_D$ exist.
	\item[\rm(ii)] $u_i - u_{i-1} \neq 0$ for every $1 \leq i \leq D$.
	\item[\rm(iii)] $\varphi_i \neq \phi_i$ for every $1 \leq i \leq D$.
	\item[\rm(iv)] $\beta \neq -2$ or $D$ is even.
\end{itemize}
When these conditions hold, we have
\begin{alignat}{2}
	\widehat{\psi}_i 
	& = \frac{\theta_0 - \theta_D}{\theta_D}
		\frac{\phi_{i+1}}{\varphi_{i+1} - \phi_{i+1}}
	& \qquad & (0 \leq i \leq D-1),\label{eq:psi-closed}\\
	\widehat{\tau}_0 
	& = 0, \label{eq:tau0-closed}\\
	\widehat{\tau}_i
	& = \frac{\theta_D}{\theta_0 - \theta_D}
		\frac{\xi_{D-i}(\theta_i^* - \theta_D^*)(\varphi_i - \phi_i)}
		{(\theta_{i-1}^* - \theta_i^*)(\theta_{i-1}^* - \theta_{i+1}^*)}
	& \qquad & (1 \leq i \leq D-1),\label{eq:tau-closed}\\
	\widehat{\tau}_D
	&= \frac{\theta_D}{\theta_0 - \theta_D}
		\frac{\varphi_D - \phi_D}{\theta_{D-1}^* - \theta_D^*}.\label{eq:tauD-closed}
\end{alignat}
\end{theorem}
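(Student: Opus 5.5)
The equivalences reduce almost entirely to results already established; the closed formulas then follow by substituting the split-sequence expression for the standard sequence into Lemma~\ref{lem:spectral-nondegeneracy}.

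\textbf{The equivalences.} By Lemma~\ref{lem:standard-split-product}, every $u_i=u_i(\theta_D)$ is nonzero. Hence condition \eqref{eq:spectral-nondegeneracy} for $\theta=\theta_D$ reduces to $u_i-u_{i-1}\neq 0$ for $1\leq i\leq D$. Lemma~\ref{lem:spectral-nondegeneracy} then gives (i)$\Leftrightarrow$(ii); note that $\theta_D\neq 0$ by Lemma~\ref{lem:end-split} and $\theta_D\neq k$, so the construction makes sense.

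For (ii)$\Leftrightarrow$(iii), use \eqref{eq:standard-split-ratio}:
$$
u_{i-1}-u_i=u_i\Bigl(\frac{\varphi_i}{\phi_i}-1\Bigr)=\frac{u_i(\varphi_i-\phi_i)}{\phi_i}.
$$
Since $u_i\neq 0$ and $\phi_i\neq 0$, we get $u_i=u_{i-1}$ if and only if $\varphi_i=\phi_i$.

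Finally, (iii)$\Leftrightarrow$(iv) is Lemma~\ref{lem:split-nonzero}.

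\textbf{Closed formula for $\widehat\psi_i$.} Apply \eqref{eq:psi-standard}, dividing numerator and denominator by $u_{i+1}$ and using \eqref{eq:standard-split-ratio} with index $i+1$:
$$
\widehat\psi_i=\omega\,\frac{1}{1-\varphi_{i+1}/\phi_{i+1}}=-\omega\,\frac{\phi_{i+1}}{\varphi_{i+1}-\phi_{i+1}}.
$$
Since $-\omega=(\theta_0-\theta_D)/\theta_D$, this is \eqref{eq:psi-closed}.

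\textbf{Closed formula for $\widehat\tau_i$.} Here $\widehat\tau_0=0$ by definition. For $1\leq i\leq D$, by \eqref{eq:tau-standard} and \eqref{eq:standard-split-ratio},
$$
\widehat\tau_i=\frac{c_i}{\omega}\Bigl(1-\frac{\varphi_i}{\phi_i}\Bigr)=\frac{\theta_D}{\theta_0-\theta_D}\,\frac{c_i(\varphi_i-\phi_i)}{\phi_i},
$$
where $-1/\omega=\theta_D/(\theta_0-\theta_D)$. Now substitute $c_i$ from Lemma~\ref{lem:intersection-formulas}. In \eqref{eq:ci-PA} for $1\leq i\leq D-1$, and in \eqref{eq:cD-PA} for $i=D$, the factor $\phi_i$ cancels and leaves exactly \eqref{eq:tau-closed} and \eqref{eq:tauD-closed}.

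As a sanity check, Lemma~\ref{lem:split-difference-general} with $i=D$ gives $\varphi_D-\phi_D=(\theta_D^*-\theta_{D-1}^*)(\theta_0-\theta_D)$, so \eqref{eq:tauD-closed} yields $\widehat\tau_D=-\theta_D$, consistent with Corollary~\ref{cor:general-end-parameters}.

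The only real work is bookkeeping with signs and with the factor $\omega$. The one point needing care is that $\phi_i\neq 0$ is used to cancel; this holds by Lemma~\ref{lem:PA}(ii).
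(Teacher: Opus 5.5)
Your proposal is correct and follows the paper's proof essentially step for step. You use nonvanishing of the $u_i$ from Lemma~\ref{lem:standard-split-product} with Lemma~\ref{lem:spectral-nondegeneracy} for (i)$\Leftrightarrow$(ii), the ratio \eqref{eq:standard-split-ratio} for (ii)$\Leftrightarrow$(iii), Lemma~\ref{lem:split-nonzero} for (iii)$\Leftrightarrow$(iv), and substitution of \eqref{eq:ci-PA} and \eqref{eq:cD-PA} into the standard-sequence formulas for the closed forms, exactly as the paper does; your check that \eqref{eq:tauD-closed} reduces to $-\theta_D$, matching Corollary~\ref{cor:general-end-parameters}, is a nice extra not in the paper's proof.
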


\begin{proof}
By Lemma~\ref{lem:standard-split-product}, $u_i \neq 0$ for $0 \leq i \leq D$.
Therefore, Lemma~\ref{lem:spectral-nondegeneracy} shows that (i) and (ii) are equivalent.
By \eqref{eq:standard-split-ratio},
$$
	u_i - u_{i-1}
	= u_i\left(1 - \frac{\varphi_i}{\phi_i}\right)
	\qquad (1 \leq i \leq D).
$$
Since $u_i$ and $\phi_i$ are nonzero, (ii) and (iii) are equivalent.
The equivalence of (iii) and (iv) follows from Lemma~\ref{lem:split-nonzero}.

\smallskip
Assume that these equivalent conditions hold.
By \eqref{eq:psi-standard}, \eqref{eq:standard-split-ratio}, and \eqref{eq:omega},
$$
	\widehat{\psi}_i
	= \omega \frac{1}{1 - \varphi_{i+1}/\phi_{i+1}}
	= \frac{\theta_0 - \theta_D}{\theta_D}
	\frac{\phi_{i+1}}{\varphi_{i+1} - \phi_{i+1}}
	\qquad (0 \leq i \leq D-1).
$$
This gives \eqref{eq:psi-closed}.
By \eqref{def:initial_up}, $\widehat{\tau}_0 = 0$.
For $1 \leq i \leq D$, equations \eqref{eq:tau-standard}, \eqref{eq:standard-split-ratio}, and \eqref{eq:omega} give
$$
	\widehat{\tau}_i
	= \frac{c_i}{\omega}
	\left(1 - \frac{\varphi_i}{\phi_i}\right)
	= \frac{\theta_D}{\theta_0 - \theta_D}
	\frac{c_i(\varphi_i - \phi_i)}{\phi_i}.
$$
For $1 \leq i \leq D-1$, evaluate this expression using \eqref{eq:ci-PA} to obtain \eqref{eq:tau-closed}.
For $i = D$, evaluate it using \eqref{eq:cD-PA} to obtain \eqref{eq:tauD-closed}.
\end{proof}

We next give a necessary integrality condition for $\Gamma$ to be geometric with $\theta_D=\theta_{\min}$, in terms of $\omega$ and the values of $\widehat{\psi}_i$ and $\widehat{\tau}_i$ given in \eqref{eq:psi-closed}--\eqref{eq:tauD-closed}.

\begin{remark}\label{rem:integrality}
Suppose that $\Gamma$ is geometric and that $\theta_D=\theta_{\min}$.
By Remark~\ref{rem:geometric-parameters}, the geometric parameter sequences associated with $\theta_D$ coincide with the geometric parameter sequences of $\Gamma$.
Hence
$$
	\omega \in \mathbb{Z}_{>0}, \qquad
	\widehat{\psi}_i \in \mathbb{Z} \quad (0\leq i\leq D-1), \qquad
	\widehat{\tau}_i \in \mathbb{Z} \quad (0\leq i\leq D).
$$
Consequently, if $\omega$ is not a positive integer, or if one of the
values in \eqref{eq:psi-closed}--\eqref{eq:tauD-closed} is not an
integer, then $\Gamma$ cannot be geometric with
$\theta_D=\theta_{\min}$.
\end{remark}

The following example shows that the integrality conditions in Remark~\ref{rem:integrality} are not sufficient for geometricity.

\begin{example}\label{ex:doob}
Let $m\geq 1$ and $n\geq 0$, and assume that $2m+n\geq 3$.
The Doob graph $D(m,n)$ is the Cartesian product of $m$ copies of the Shrikhande graph and $n$ copies of $K_4$, and has diameter $D=2m+n$.
It has the same intersection array as the Hamming graph $H(2m+n,4)$; see \cite[Section~9.2B]{BCN}.
The intersection numbers of $D(m,n)$ are $b_i = 3(D - i)$ and $c_i = i$.
Moreover, $D(m,n)$ is $Q$-polynomial with respect to the eigenvalue ordering
$$
	\theta_i = 3D - 4i \qquad (0\leq i\leq D);
$$
see \cite[p.~175]{1997TanJAC}.
In particular, $\theta_D = -D = \theta_{\min}$.
Therefore, the recursive construction gives
$$
	\omega = 4, \qquad
	\widehat{\psi}_i = 1 \quad (0 \leq i\leq D-1),\qquad
	\widehat{\tau}_i = i \quad (0 \leq i\leq D).
$$
Thus all the integrality conditions in Remark~\ref{rem:integrality} are satisfied.

\smallskip
The graph $D(m,n)$ is not geometric; see \cite[Section~4]{2010KoolenBang}.
Indeed, the Delsarte bound is $4$.
An edge arising from a Shrikhande factor is contained in no clique of size $4$, since any clique containing such an edge is contained in the corresponding copy of the Shrikhande graph, whose clique number is $3$.
Thus the integrality conditions do not guarantee the incidence of Delsarte cliques required for geometricity.
\end{example}

\section{Split ratio sequence}\label{Sec:split-ratios}

We continue to discuss the $Q$-polynomial distance-regular graph $\Gamma$.
Recall that we fixed a $Q$-polynomial ordering $\theta_0, \theta_1, \ldots, \theta_D$.
In this section, we introduce the split ratio sequence associated with the parameter array of $\Gamma$.
When the geometric parameter sequences associated with $\theta_D$ exist, we express them in terms of the split ratio sequence.
We also relate the split ratio sequence to the standard sequence corresponding to $\theta_D$ and use this relation to obtain a bidiagonal factorization of $\mathcal B - \theta_D I$.

\begin{definition}\label{def:split-ratio}
Recall the primary Leonard system $\Phi$ and the parameter array of $\Gamma$ in \eqref{eq:primaryPA}.
For $1 \leq i \leq D$, define
\begin{equation}\label{eq:split-ratio}
	\rho_i = \frac{\varphi_i}{\phi_i}.
\end{equation}
We call $\{\rho_i\}_{i=1}^D$ the \emph{split ratio sequence} of $\Phi$.
Note that each $\rho_i$ is well defined and nonzero since $\varphi_i$ and $\phi_i$ are nonzero.
\end{definition}

Let $\{u_i\}_{i=0}^D$ denote the standard sequence corresponding to $\theta_D$.
By \eqref{eq:standard-split-ratio} and \eqref{eq:split-ratio},
\begin{equation}\label{eq:rho-standard}
	\rho_i = \frac{u_{i-1}}{u_i} \qquad (1 \leq i \leq D).
\end{equation}
Recall the scalar $\omega$ from \eqref{eq:omega}.
By \eqref{eq:end-split} and \eqref{eq:split-ratio}, we have
$\rho_1 = \theta_0/\theta_D$.
Hence
\begin{equation}\label{eq:omega-rho}
	\omega = 1 - \rho_1.
\end{equation}
We next express the geometric parameter sequences associated with $\theta_D$ in terms of the split ratio sequence.

\begin{lemma}\label{lem:split-ratios}
Assume that the geometric parameter sequences associated with $\theta_D$ exist.
Then
\begin{align}
	\widehat{\psi}_i &= \frac{1 - \rho_1}{1 - \rho_{i+1}} \qquad (0 \leq i \leq D - 1), \label{eq:psi-rho}\\
	\widehat{\tau}_i &= c_i\frac{1 - \rho_i}{1 - \rho_1} \qquad (1 \leq i \leq D). \label{eq:tau-rho}
\end{align}
\end{lemma}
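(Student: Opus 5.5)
The plan is to substitute the split ratio relation \eqref{eq:rho-standard}, $\rho_i = u_{i-1}/u_i$, into the standard-sequence formulas of Lemma~\ref{lem:spectral-nondegeneracy}, and to use \eqref{eq:omega-rho}, $\omega = 1-\rho_1$, to rewrite $\omega$. By Theorem~\ref{thm:closed-form}, existence of the sequences means $u_i - u_{i-1}\neq 0$ for $1\leq i\leq D$. Equivalently, $\rho_i \neq 1$ for $1\leq i\leq D$, so every denominator $1-\rho_j$ below is nonzero. By Lemma~\ref{lem:standard-split-product}, each $u_i$ is nonzero, so dividing by $u_i$ or $u_{i+1}$ is legitimate.

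For \eqref{eq:psi-rho}, take $0\leq i\leq D-1$ and start from \eqref{eq:psi-standard} with $\theta=\theta_D$. Dividing numerator and denominator by $u_{i+1}$ gives
$$
	\widehat{\psi}_i=\omega\frac{u_{i+1}}{u_{i+1}-u_i}=\frac{\omega}{1-u_i/u_{i+1}}=\frac{\omega}{1-\rho_{i+1}}.
$$
Substituting $\omega=1-\rho_1$ then yields the claim.

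For \eqref{eq:tau-rho}, take $1\leq i\leq D$ and start from \eqref{eq:tau-standard}. Dividing by $u_i$ gives
$$
	\widehat{\tau}_i=\frac{c_i}{\omega}\Bigl(1-\frac{u_{i-1}}{u_i}\Bigr)=c_i\frac{1-\rho_i}{1-\rho_1}.
$$
The intermediate expression already appears in the proof of Theorem~\ref{thm:closed-form}, so this step can simply be cited. Alternatively, \eqref{eq:tau-rho} follows from \eqref{def:uptau} together with \eqref{eq:psi-rho} at index $i-1$.

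There is no real obstacle here. The only point requiring care is confirming that the nonvanishing of $1-\rho_{i+1}$ and of $\omega$ is guaranteed by the existence hypothesis and by Lemma~\ref{lem:end-split}, and this is immediate from the facts above.
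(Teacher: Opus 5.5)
Your proposal is correct and follows the paper's proof: you substitute $\rho_i = u_{i-1}/u_i$ and $\omega = 1-\rho_1$ into \eqref{eq:psi-standard} and \eqref{eq:tau-standard}, which is exactly the argument the paper gives. Your alternative derivation of \eqref{eq:tau-rho} from \eqref{def:uptau} and \eqref{eq:psi-rho} at index $i-1$ is also valid.
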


\begin{proof}
By \eqref{eq:psi-standard}, \eqref{eq:rho-standard}, and \eqref{eq:omega-rho},
$$
	\widehat{\psi}_i
	= \frac{\omega}{1 - \rho_{i+1}}
	= \frac{1 - \rho_1}{1 - \rho_{i+1}}
	\qquad (0 \leq i \leq D - 1).
$$
This gives \eqref{eq:psi-rho}.
Next, by \eqref{eq:tau-standard}, \eqref{eq:rho-standard}, and \eqref{eq:omega-rho},
$$
	\widehat{\tau}_i
	= \frac{c_i}{\omega}(1 - \rho_i)
	= c_i\frac{1 - \rho_i}{1 - \rho_1}
	\qquad (1 \leq i \leq D).
$$
This gives \eqref{eq:tau-rho}.
\end{proof}

Recall the tridiagonal matrix $\mathcal B$ in \eqref{eq:intersection-matrix}.
By Lemma~\ref{lem:bidiagonal-factorization}, if the geometric parameter sequences associated with $\theta_D$ exist, then $\mathcal B - \theta_D I$ has a bidiagonal factorization in terms of these sequences.
We now give a bidiagonal factorization of $\mathcal B - \theta_D I$ in terms of the split ratio sequence.
An important point is that this factorization does not require the geometric parameter sequences associated with $\theta_D$ to exist.
When these sequences do exist, the two factorizations are related by a diagonal rescaling.

\begin{proposition}\label{prop:split-ratio-factorization}
Let $\mathcal B$ denote the intersection matrix in \eqref{eq:intersection-matrix}.
Recall the split ratio sequence $\{\rho_i\}_{i=1}^D$ from \eqref{eq:split-ratio}.
Define
$$
	L_\rho=
	\begin{pmatrix}
	-\dfrac{b_0}{\rho_1} & & & & \mathbf{0} \\[2mm]
	c_1 & -\dfrac{b_1}{\rho_2} & & & \\[2mm]
	& c_2 & \ddots & & \\
	& & \ddots & -\dfrac{b_{D-1}}{\rho_D} & \\[2mm]
	\mathbf{0} & & & c_D & 0
	\end{pmatrix}
$$
and
$$
	R_\rho=
	\begin{pmatrix}
	1 & -\rho_1 & & & \mathbf{0} \\
	& 1 & -\rho_2 & & \\
	& & \ddots & \ddots & \\
	& & & 1 & -\rho_D \\
	\mathbf{0} & & & &1
	\end{pmatrix}.
$$
Then
\begin{equation}\label{eq:split-ratio-factorization}
	\mathcal B - \theta_D I = L_\rho R_\rho.
\end{equation}
\end{proposition}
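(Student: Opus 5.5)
We prove the factorization by multiplying $L_\rho R_\rho$ entrywise and comparing it with $\mathcal B-\theta_D I$. The tool is the relation \eqref{eq:rho-standard}, $\rho_i=u_{i-1}/u_i$, where $\{u_i\}_{i=0}^D$ is the standard sequence for $\theta_D$. Every $u_i$ is nonzero by Lemma~\ref{lem:standard-split-product}, so each $\rho_i$ is a well-defined nonzero scalar. Nothing in the argument divides by $1-\rho_i$, which is why the factorization holds even when the geometric parameter sequences do not exist.

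We first compute the product. $L_\rho$ is lower bidiagonal, with diagonal entries $-b_j/\rho_{j+1}$ for $0\le j\le D-1$, last diagonal entry $0$, and subdiagonal entries $c_j$. $R_\rho$ is upper bidiagonal, with unit diagonal and superdiagonal entries $-\rho_{j+1}$. Hence $L_\rho R_\rho$ is tridiagonal, and we read off its entries.
\begin{itemize}
\item The $(j,j+1)$ entry is $(-b_j/\rho_{j+1})(-\rho_{j+1})=b_j$.
\item The $(j,j-1)$ entry is $c_j\cdot 1=c_j$.
\item For $0\le j\le D-1$, the $(j,j)$ entry is $-c_j\rho_j-b_j/\rho_{j+1}$, with the convention $c_0\rho_0=0$.
\item The $(D,D)$ entry is $-c_D\rho_D$.
\end{itemize}
So the off-diagonal entries agree with those of $\mathcal B$, and only the diagonal remains to be checked.

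For the diagonal we need
\[
a_j-\theta_D=-c_j\frac{u_{j-1}}{u_j}-b_j\frac{u_{j+1}}{u_j}.
\]
Multiplying through by $u_j$, this is exactly the standard-sequence recurrence \eqref{eq:standard-sequence} at $\theta=\theta_D$ for $1\le j\le D$, with the $b_D$ term absent when $j=D$; this matches the last diagonal entry $-c_D\rho_D$, since the $(D,D)$ entry of $L_\rho$ is $0$. For $j=0$ we need $-\theta_D=-b_0/\rho_1$. This holds because $\rho_1=\theta_0/\theta_D$ by \eqref{eq:end-split}, or equivalently because $u_1=\theta_D/k$.

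The only delicate point is the bookkeeping at the two boundary rows. Row $0$ uses $a_0=0$ and $c_0=0$. Row $D$ uses the zero entry of $L_\rho$ together with the omitted $b_Du_{D+1}$ term in \eqref{eq:standard-sequence}. Both have been checked above, which completes the proof.
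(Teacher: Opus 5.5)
Your proposal is correct and follows essentially the same route as the paper. Both arguments multiply out the bidiagonal factors, read off $c_i$ and $b_i$ on the off-diagonals, and verify the diagonal entries via $\rho_i=u_{i-1}/u_i$ and the standard-sequence recurrence at $\theta_D$, with row $0$ handled by $u_1=\theta_D/k$ and row $D$ by the omitted $b_Du_{D+1}$ term.
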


\begin{proof}
The subdiagonal and superdiagonal entries of $L_\rho R_\rho$ are $c_i$ and $b_i$, respectively.
We now compare the diagonal entries.
Let $\{u_i\}_{i=0}^D$ denote the standard sequence corresponding to $\theta_D$.
For the $0$-th diagonal entry, using \eqref{eq:rho-standard}, $u_0 = 1$, $u_1 = \theta_D/k$,
and $b_0 = k$, we obtain
$$
	-\frac{b_0}{\rho_1} = -b_0\frac{u_1}{u_0} = -\theta_D = a_0 - \theta_D.
$$
For $1 \leq i \leq D - 1$, the $i$-th diagonal entry of $L_\rho R_\rho$ is
\begin{equation}\label{eq:ci-rho-bi}
	-c_i\rho_i - \frac{b_i}{\rho_{i+1}}
	= -c_i\frac{u_{i-1}}{u_i} - b_i\frac{u_{i+1}}{u_i}.
\end{equation}
Evaluate the right-hand side of \eqref{eq:ci-rho-bi} using \eqref{eq:standard-sequence} with $\theta = \theta_D$ and simplify the result to obtain
$$
	-c_i\rho_i - \frac{b_i}{\rho_{i+1}} = a_i - \theta_D.
$$
Finally, the $D$-th diagonal entry is
$$
	-c_D\rho_D = -c_D\frac{u_{D-1}}{u_D} = a_D - \theta_D,
$$
where the last equality again follows from \eqref{eq:standard-sequence} at $i = D$.
Therefore, every entry of $L_\rho R_\rho$ agrees with the corresponding entry of $\mathcal B - \theta_D I$, and \eqref{eq:split-ratio-factorization} follows.
\end{proof}

The next lemma describes the relation between the two bidiagonal factorizations in Proposition~\ref{prop:split-ratio-factorization} and Lemma~\ref{lem:bidiagonal-factorization} when the geometric parameter sequences associated with $\theta_D$ exist.

\begin{lemma}\label{lem:factorization-rescaling}
Suppose that the geometric parameter sequences associated with $\theta_D$ exist.
Recall the matrices $U_{\theta_D}$ and $L_{\theta_D}$ from Lemma~\ref{lem:bidiagonal-factorization} and the matrices $R_\rho$ and $L_\rho$ from Proposition~\ref{prop:split-ratio-factorization}.
Let
$$
	S = \operatorname{diag} (\widehat{\psi}_0,\widehat{\psi}_1,\ldots,\widehat{\psi}_{D-1},\omega).
$$
Then $S$ is invertible, and
\begin{equation}\label{eq:UL-normalization}
	U_{\theta_D} = S R_\rho,
	\qquad
	L_\rho = L_{\theta_D}S.
\end{equation}
\end{lemma}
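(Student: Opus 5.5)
The plan is a direct entrywise comparison using Lemma~\ref{lem:split-ratios} and the defining relations of the recursion. Throughout, write $\widehat{\psi}_i=\widehat{\psi}_i(\theta_D)$, $\widehat{\tau}_i=\widehat{\tau}_i(\theta_D)$, and $\omega=\omega_{\theta_D}$.

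\textbf{Invertibility of $S$.} The recursion is well defined, so every $\widehat{\psi}_i$ with $0\leq i\leq D-1$ is nonzero: each of them appears in a denominator in \eqref{def:uptau}. Moreover $\omega\neq0$ because $\theta_D\neq k$. Hence $S$ is invertible.

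\textbf{The identity $U_{\theta_D}=SR_\rho$.} Multiplying $R_\rho$ on the left by $S$ scales row $i$ by the $i$-th diagonal entry of $S$. The diagonal of $SR_\rho$ is therefore $(\widehat{\psi}_0,\ldots,\widehat{\psi}_{D-1},\omega)$, which matches $U_{\theta_D}$. For $0\leq i\leq D-1$, the $(i,i+1)$ entry of $SR_\rho$ is $-\widehat{\psi}_i\rho_{i+1}$. We need this to equal $\omega-\widehat{\psi}_i$, that is, $\widehat{\psi}_i(1-\rho_{i+1})=\omega$. This is exactly \eqref{eq:psi-rho} combined with \eqref{eq:omega-rho}.

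\textbf{The identity $L_\rho=L_{\theta_D}S$.} Multiplying $L_{\theta_D}$ on the right by $S$ scales column $j$ by the $j$-th diagonal entry of $S$.

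For the subdiagonal, the $(i,i-1)$ entry of $L_{\theta_D}S$ is $\widehat{\tau}_i\widehat{\psi}_{i-1}$ for $1\leq i\leq D$. This equals $c_i$ by \eqref{def:uptau}.

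For the diagonal entries with $0\leq i\leq D-1$, the $(i,i)$ entry is $-(\theta_D+\widehat{\tau}_i)\widehat{\psi}_i$, and it must equal $-b_i/\rho_{i+1}$. The proof of Lemma~\ref{lem:bidiagonal-factorization} gives $(\theta_D+\widehat{\tau}_i)(\widehat{\psi}_i-\omega)=b_i$, including the case $i=0$. By the previous step, $\widehat{\psi}_i-\omega=\widehat{\psi}_i\rho_{i+1}$. Hence
$$
	(\theta_D+\widehat{\tau}_i)\widehat{\psi}_i\rho_{i+1}=b_i,
$$
and dividing by $\rho_{i+1}\neq0$ gives the required equality.

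For the last diagonal entry, the $(D,D)$ entry is $-(\theta_D+\widehat{\tau}_D)\omega$. This vanishes because $\widehat{\tau}_D=-\theta_D$ by Corollary~\ref{cor:general-end-parameters}, and it matches the zero in the $(D,D)$ position of $L_\rho$.

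All the remaining entries of both products are zero by bidiagonality. The only point requiring care is the diagonal comparison, which relies on reusing the $b_i$ relation from the proof of Lemma~\ref{lem:bidiagonal-factorization} together with the first identity $U_{\theta_D}=SR_\rho$.
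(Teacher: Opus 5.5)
Your proof is correct. You handle the invertibility of $S$ and the identity $U_{\theta_D}=SR_\rho$ exactly as the paper does. For $L_\rho=L_{\theta_D}S$, however, your route differs.

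The paper does not compare entries for the second identity. It writes $L_\rho R_\rho=\mathcal B-\theta_D I=L_{\theta_D}U_{\theta_D}=L_{\theta_D}SR_\rho$, using Proposition~\ref{prop:split-ratio-factorization} and Lemma~\ref{lem:bidiagonal-factorization}. It then cancels $R_\rho$, which is unit upper triangular and hence invertible. That argument is shorter and gets the $(D,D)$ corner for free; it even implicitly recovers $\widehat{\tau}_D=-\theta_D$, since $\omega\neq 0$.

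Your entrywise check instead needs Corollary~\ref{cor:general-end-parameters} for that corner. It also needs the relation $(\theta_D+\widehat{\tau}_i)(\widehat{\psi}_i-\omega)=b_i$, including the case $i=0$, for the remaining diagonal entries. In exchange, your argument never uses Proposition~\ref{prop:split-ratio-factorization}. So, together with Lemma~\ref{lem:bidiagonal-factorization}, it gives an independent derivation of \eqref{eq:split-ratio-factorization} in the case where the geometric parameter sequences associated with $\theta_D$ exist.
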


\begin{proof}
Since each $\widehat{\psi}_i$ is nonzero and $\omega \neq 0$, $S$ is invertible.
By Lemma~\ref{lem:split-ratios},
$$
	\omega - \widehat{\psi}_i
	=
	-\rho_{i+1}\widehat{\psi}_i
	\qquad (0 \leq i \leq D - 1).
$$
Comparing the diagonal and superdiagonal entries of $U_{\theta_D}$ and $S R_\rho$ gives the first equation in \eqref{eq:UL-normalization}.
Using this equation, Lemma~\ref{lem:bidiagonal-factorization}, and Proposition~\ref{prop:split-ratio-factorization}, we have
$$
	L_\rho R_\rho
	= \mathcal B - \theta_D I
	= L_{\theta_D}U_{\theta_D}
	= L_{\theta_D}S R_\rho.
$$
Since $R_\rho$ is invertible, it follows that $L_\rho = L_{\theta_D}S$.
This proves \eqref{eq:UL-normalization}.
\end{proof}

We finish this section with a sign property of the split ratio sequence.

\begin{lemma}\label{lem:split-ratio-sign}
Suppose that $\theta_D = \theta_{\min}$.
Then
\begin{equation*}
	\rho_i < 0
	\qquad (1 \leq i \leq D).
\end{equation*}
\end{lemma}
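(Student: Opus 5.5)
The plan is to use the identification \eqref{eq:rho-standard}, namely $\rho_i = u_{i-1}/u_i$ for $1\leq i\leq D$, where $\{u_i\}_{i=0}^D$ is the standard sequence corresponding to $\theta_D$. With this, the statement reduces to a sign property of the standard sequence of the smallest eigenvalue, which was already used in the proof of Corollary~\ref{cor:smallest-eigenvalue-recursion}.

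Assume $\theta_D=\theta_{\min}$. By \cite[Corollary~4.1.2]{BCN}, the standard sequence corresponding to $\theta_{\min}$ has exactly $D$ sign changes. Since it has $D+1$ entries, consecutive entries must have opposite signs. Hence
$$
	u_{i-1}u_i<0 \qquad (1\leq i\leq D),
$$
exactly as in the proof of Corollary~\ref{cor:smallest-eigenvalue-recursion}. In particular every $u_i$ is nonzero, which is also guaranteed independently by Lemma~\ref{lem:standard-split-product}.

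Now write
$$
	\rho_i=\frac{u_{i-1}}{u_i}=\frac{u_{i-1}u_i}{u_i^2}.
$$
This quantity is negative for every $1\leq i\leq D$, which proves the claim. As a consistency check at $i=1$, we have $\rho_1=\theta_0/\theta_D=k/\theta_{\min}<0$, since $\theta_{\min}\leq -1$.

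There is no real obstacle. The only subtle point is to make sure the sign-change result applies when $\theta_D$ is the smallest eigenvalue, and the hypothesis $\theta_D=\theta_{\min}$ provides exactly this. The $Q$-polynomial structure enters only through the identification \eqref{eq:rho-standard}.
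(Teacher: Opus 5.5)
Your proposal is correct and follows essentially the same route as the paper: the sign-change property of the standard sequence for $\theta_{\min}$ from \cite[Corollary~4.1.2]{BCN} gives $u_{i-1}u_i<0$, and the identification $\rho_i=u_{i-1}/u_i$ from \eqref{eq:rho-standard} then yields $\rho_i<0$. You also correctly note that the nonvanishing of the $u_i$ is already forced by the $D$ sign changes, so the appeal to Lemma~\ref{lem:standard-split-product} for this point is only a redundant safeguard.
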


\begin{proof}
The standard sequence corresponding to $\theta_{\min}$ has exactly $D$ sign changes; see \cite[Corollary~4.1.2]{BCN}.
By Lemma~\ref{lem:standard-split-product}, each $u_i$ is nonzero.
Therefore,
$$
	u_{i-1}u_i < 0 \qquad (1 \leq i \leq D).
$$
The result follows from \eqref{eq:rho-standard}.
\end{proof}

\section{Examples: Classical families of distance-regular graphs}\label{Sec:classical-families}

In this section, we consider distance-regular graphs with classical parameters and determine the geometric parameter sequences associated with an eigenvalue.
We begin by recalling some preliminaries on classical parameters.
For an integer $i \geq 1$, write
$$
	\gauss{i}{1} = 1 + b + b^2 + \cdots + b^{i-1}, \qquad \gauss{0}{1} = 0.
$$
The distance-regular graph $\Gamma$ is said to have \emph{classical parameters} $(D,b,\alpha,\sigma)$ whenever
\begin{alignat}{2}
	b_i
	&=
	\left(\gauss{D}{1} - \gauss{i}{1}\right)
	\left(\sigma - \alpha\gauss{i}{1}\right)
	& \qquad & (0\leq i\leq D-1), \label{eq:classical-bi}\\
	c_i
	&=
	\gauss{i}{1}
	\left(1 + \alpha\gauss{i-1}{1}\right)
	& \qquad & (1\leq i\leq D); \label{eq:classical-ci}
\end{alignat}
see \cite[Section~6.1]{BCN}.
In this case, $b$ is an integer distinct from $0$ and $-1$; see \cite[Proposition~6.2.1]{BCN}.
Moreover, $\Gamma$ is $Q$-polynomial and the eigenvalues in the corresponding $Q$-polynomial ordering are
\begin{equation}\label{eq:classical-eigenvalues}
	\theta_i
	= \gauss{D-i}{1} \left(\sigma - \alpha\gauss{i}{1}\right) - \gauss{i}{1}
	\qquad (0\leq i\leq D);
\end{equation}
see \cite[Corollary~8.4.2]{BCN}.
In particular,
\begin{equation}\label{eq:classical-end-eigenvalues}
	\theta_0 = \sigma\gauss{D}{1}, \qquad \theta_D = -\gauss{D}{1}.
\end{equation}

\begin{lemma}\label{lem:classical-parameters}
Suppose that $\Gamma$ has classical parameters $(D,b,\alpha,\sigma)$.
With respect to the $Q$-polynomial ordering in \eqref{eq:classical-eigenvalues}, the geometric parameter sequences associated with $\theta_D$ exist and are given by
\begin{equation}\label{eq:classical-sequences}
	\widehat{\psi}_i = 1 + \alpha\gauss{i}{1} \qquad (0\leq i\leq D-1),
	\qquad
	\widehat{\tau}_i = \gauss{i}{1} \qquad (0\leq i\leq D).
\end{equation}
\end{lemma}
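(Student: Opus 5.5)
The most direct route avoids the parameter array and works from the recursion itself. I would guess the answer \eqref{eq:classical-sequences} and check that it satisfies the two defining relations \eqref{def:uptau} and \eqref{def:uppsi}, with nonvanishing denominators. Since the recursion is deterministic, any sequence that satisfies the defining relations at every step, with every denominator nonzero, must be the one it produces. This gives existence and the formulas at the same time.

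The two relations to verify are
$$
\widehat\tau_i\widehat\psi_{i-1}=c_i \quad (1\le i\le D),
\qquad
(\theta_D+\widehat\tau_i)(\widehat\psi_i-\omega)=b_i \quad (1\le i\le D-1).
$$
First, the initial values match: $\widehat\psi_0=1+\alpha\gauss{0}{1}=1$ and $\widehat\tau_0=\gauss{0}{1}=0$.

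\textbf{The $c_i$ relation.} This is immediate from \eqref{eq:classical-ci}: $\gauss{i}{1}\bigl(1+\alpha\gauss{i-1}{1}\bigr)=c_i$. Nonvanishing of $\widehat\psi_{i-1}$ follows because $c_i>0$ forces $1+\alpha\gauss{i-1}{1}\neq 0$.

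\textbf{The $b_i$ relation.} By \eqref{eq:classical-end-eigenvalues}, $\theta_D=-\gauss{D}{1}$ and $\omega=1+\sigma$. Then
$$
\theta_D+\widehat\tau_i=-\bigl(\gauss{D}{1}-\gauss{i}{1}\bigr),
\qquad
\widehat\psi_i-\omega=-\bigl(\sigma-\alpha\gauss{i}{1}\bigr).
$$
The product is exactly $b_i$ by \eqref{eq:classical-bi}. The denominator $\theta_D+\widehat\tau_i$ is nonzero for $1\le i\le D-1$ because $b_i>0$.

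\textbf{Conclusion.} By induction on $i$, the recursion at each step produces exactly the proposed values. Thus the recursion is well defined, so the sequences exist. For $\widehat\tau_D$, the same $c_D$ relation gives $\gauss{D}{1}$, which is consistent with Corollary~\ref{cor:general-end-parameters} since $-\theta_D=\gauss{D}{1}$.

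The only real obstacle is justifying the eigenvalue data. Specifically, one needs that $\theta_D=-\gauss{D}{1}$ is the last eigenvalue in the $Q$-polynomial ordering, and that $\theta_D\neq 0$ and $\theta_D\ne k$ so that $\omega$ is defined. These follow from \eqref{eq:classical-end-eigenvalues} and Lemma~\ref{lem:end-split}. The rest is a direct substitution.
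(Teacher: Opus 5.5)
Your proposal is correct and is essentially the paper's argument: you induct along the recursion, using \eqref{eq:classical-ci} to get $\widehat{\tau}_i=\gauss{i}{1}$ and \eqref{eq:classical-bi} with $\omega=1+\sigma$ to get $\widehat{\psi}_i=1+\alpha\gauss{i}{1}$. The only differences are in how you justify the nonzero quantities. You get $\theta_D+\widehat{\tau}_i\neq 0$ from $b_i>0$ and $\theta_D\neq 0$ from Lemma~\ref{lem:end-split}, whereas the paper uses $b\notin\{0,-1\}$ to get $\gauss{i}{1}\neq 0$, together with the identity $\theta_D+\gauss{i}{1}=-b^i\gauss{D-i}{1}$.
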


\begin{proof}
We will show directly that the recursive construction \eqref{def:initial_up}--\eqref{def:uppsi} is well defined.
Since $b$ is an integer distinct from $0$ and $-1$, we have $\gauss{i}{1} \neq 0$ for $1 \leq i \leq D$.
Thus, by \eqref{eq:classical-end-eigenvalues} we have $\theta_D \neq 0$ and
\begin{equation}\label{eq:omega-classical}
	\omega_{\theta_D}
	= 1 - \frac{\theta_0}{\theta_D}
	= 1 + \sigma.
\end{equation}
Moreover, using $\gauss{i}{1} \neq 0$ and $c_i > 0$, \eqref{eq:classical-ci} gives
\begin{equation}\label{eq:classical-factor-nonzero}
	1 + \alpha\gauss{i-1}{1} \neq 0 \qquad (1 \leq i \leq D).
\end{equation}

We now evaluate the recursive construction.
The formulas in \eqref{eq:classical-sequences} hold for $i=0$.
Suppose that
\begin{equation}\label{eq:classical-induction-psi}
	\widehat{\psi}_{i-1}=1+\alpha\gauss{i-1}{1}
\end{equation}
for some $1\leq i\leq D$.
Using \eqref{eq:classical-ci}, \eqref{eq:classical-factor-nonzero}, and \eqref{eq:classical-induction-psi}, we obtain
$$
	\widehat{\tau}_i = \frac{c_i}{\widehat{\psi}_{i-1}} = \gauss{i}{1}.
$$
If $1\leq i\leq D-1$, then
\begin{equation}\label{eq:theta_D+uptau_i}
	\theta_D + \widehat{\tau}_i
	= -\gauss{D}{1} + \gauss{i}{1}
	= -b^i\gauss{D-i}{1}
	\neq 0.
\end{equation}
Using \eqref{eq:classical-bi}, \eqref{eq:omega-classical}, and \eqref{eq:theta_D+uptau_i}, we obtain
$$
	\widehat{\psi}_i
	= \omega_{\theta_D} + \frac{b_i}{\theta_D + \widehat{\tau}_i}
	= 1 + \alpha\gauss{i}{1}.
$$
It follows inductively that the recursive construction is well defined and yields \eqref{eq:classical-sequences}.
\end{proof}

Using Lemma~\ref{lem:classical-parameters}, we now describe the geometric parameter sequences for five classical families of distance-regular graphs: the Hamming, Johnson, Grassmann, dual polar, and bilinear forms graphs.

\begin{example}\label{ex:hamming}
Let $\Gamma=H(D,q)$ denote the Hamming graph, where $q\geq2$; see \cite[Section~9.2]{BCN}.
The graph $\Gamma$ has classical parameters
$$
	(D,b,\alpha,\sigma) = (D,1,0,q-1).
$$
Therefore, by Lemma~\ref{lem:classical-parameters}, the geometric parameter sequences associated with $\theta_D$ are
$$
	\widehat{\psi}_i = 1 \quad (0\leq i\leq D-1),
	\qquad
	\widehat{\tau}_i = i \quad (0\leq i\leq D).
$$
By \eqref{eq:classical-eigenvalues},
$$
	\theta_i = D(q - 1) - qi \qquad (0\leq i\leq D),
$$
so $\theta_D = -D = \theta_{\min}$.
The sets of vertices obtained by fixing all but one coordinate are Delsarte cliques of size $q$.
These cliques form a geometric structure on $\Gamma$.
Hence, by Remark~\ref{rem:geometric-parameters}, the sequences $\{\widehat{\psi}_i\}_{i=0}^{D-1}$ and $\{\widehat{\tau}_i\}_{i=0}^{D}$ coincide with the geometric parameter sequences of $\Gamma$.
\end{example}

\begin{example}\label{ex:johnson}
Let $\Gamma=J(v,D)$ denote the Johnson graph, where $D\leq v/2$; see \cite[Section~9.1]{BCN}.
The graph $\Gamma$ has classical parameters
$$
	(D,b,\alpha,\sigma)=(D,1,1,v-D).
$$
Therefore, by Lemma~\ref{lem:classical-parameters}, the geometric parameter sequences associated with $\theta_D$ are
$$
	\widehat{\psi}_i = i+1 \quad (0\leq i\leq D-1),
	\qquad
	\widehat{\tau}_i = i \quad (0\leq i\leq D).
$$
By \eqref{eq:classical-eigenvalues},
$$
	\theta_i = (D - i)(v - D - i) - i \qquad (0 \leq i \leq D).
$$
Since $D\leq v/2$, these eigenvalues are strictly decreasing, so $\theta_D = -D = \theta_{\min}$.
The sets of all $D$-subsets containing a fixed $(D-1)$-subset are Delsarte cliques of size $v-D+1$.
These cliques form a geometric structure on $\Gamma$.
Hence, by Remark~\ref{rem:geometric-parameters}, the sequences $\{\widehat{\psi}_i\}_{i=0}^{D-1}$ and $\{\widehat{\tau}_i\}_{i=0}^{D}$ coincide with the geometric parameter sequences of $\Gamma$.
\end{example}

\begin{example}\label{ex:grassmann}
Let $\Gamma=J_q(N,D)$ denote the Grassmann graph, where $q$ is a prime power and $N\geq2D$; see \cite[Section~9.3]{BCN}.
The graph $\Gamma$ has classical parameters
$$
	(D,b,\alpha,\sigma) = \left(D,q,q,\gauss{N-D+1}{1} - 1\right).
$$
Therefore, by Lemma~\ref{lem:classical-parameters}, the geometric parameter sequences associated with $\theta_D$ are
$$
	\widehat{\psi}_i = \gauss{i+1}{1} \quad (0\leq i\leq D-1),
	\qquad
	\widehat{\tau}_i = \gauss{i}{1} \quad (0\leq i\leq D).
$$
By \eqref{eq:classical-eigenvalues},
$$
	\theta_i = q^{i+1}\gauss{D-i}{1}\gauss{N-D-i}{1} - \gauss{i}{1} \qquad (0\leq i\leq D).
$$
These eigenvalues are strictly decreasing, so $\theta_D = -\gauss{D}{1} = \theta_{\min}$.
The sets of all $D$-dimensional subspaces containing a fixed $(D-1)$-dimensional subspace are Delsarte cliques of size $\gauss{N-D+1}{1}$.
These cliques form a geometric structure on $\Gamma$.
Hence, by Remark~\ref{rem:geometric-parameters}, the sequences $\{\widehat{\psi}_i\}_{i=0}^{D-1}$ and $\{\widehat{\tau}_i\}_{i=0}^{D}$ coincide with the geometric parameter sequences of $\Gamma$.
\end{example}

\begin{example}\label{ex:dual-polar}
Let $\Gamma$ denote a dual polar graph of diameter $D$ with parameters $q$ and $e$; see \cite[Section~9.4]{BCN}.
The graph $\Gamma$ has classical parameters
$$
	(D,b,\alpha,\sigma)=(D,q,0,q^e).
$$
Therefore, by Lemma~\ref{lem:classical-parameters}, the geometric parameter sequences associated with $\theta_D$ are
$$
	\widehat{\psi}_i = 1 \quad (0\leq i\leq D-1),
	\qquad
	\widehat{\tau}_i = \gauss{i}{1} \quad (0\leq i\leq D).
$$
By \eqref{eq:classical-eigenvalues},
$$
	\theta_i
	=
	q^e\gauss{D-i}{1}-\gauss{i}{1}
	\qquad (0\leq i\leq D).
$$
These eigenvalues are strictly decreasing, so $\theta_D=-\gauss{D}{1}=\theta_{\min}$.
The sets of all maximal totally isotropic subspaces containing a fixed $(D-1)$-dimensional totally isotropic subspace are Delsarte cliques of size $1+q^e$.
These cliques form a geometric structure on $\Gamma$.
Hence, by Remark~\ref{rem:geometric-parameters}, the sequences
$\{\widehat{\psi}_i\}_{i=0}^{D-1}$ and $\{\widehat{\tau}_i\}_{i=0}^{D}$
coincide with the geometric parameter sequences of $\Gamma$.
\end{example}

\begin{example}\label{ex:bilinear}
Let $\Gamma = H_q(D,e)$ denote the bilinear forms graph, where $q$ is a prime power and $D \leq e$; see \cite[Section~9.5]{BCN}.
The vertices of $\Gamma$ are the $D\times e$ matrices over $\mathbb F_q$, where two vertices are adjacent whenever their difference has rank $1$.
The graph $\Gamma$ has classical parameters
$$
	(D,b,\alpha,\sigma) = (D,q,q-1,q^e-1).
$$
Therefore, by Lemma~\ref{lem:classical-parameters}, the geometric parameter sequences associated with $\theta_D$ are
$$
	\widehat{\psi}_i = q^i \quad (0 \leq i \leq D - 1),
	\qquad
	\widehat{\tau}_i = \gauss{i}{1} \quad (0 \leq i \leq D).
$$
By \eqref{eq:classical-eigenvalues},
$$
	\theta_i
	= \frac{q^{D+e-i} - q^D-q^e+1}{q-1}
	\qquad (0 \leq i \leq D).
$$
These eigenvalues are strictly decreasing, so
$\theta_D = -\gauss{D}{1} = \theta_{\min}$.
For a one-dimensional subspace $L$ of $\mathbb F_q^D$, let
$$
	\mathcal M_L
	=
	\{M \in \mathbb F_q^{D\times e} \mid \operatorname{Im}(M) \subseteq L\}.
$$
For any $D\times e$ matrix $N$ over $\mathbb F_q$, the set $N+\mathcal M_L$ is a Delsarte clique of size $q^e$.
These cliques form a geometric structure on $\Gamma$.
Hence, by Remark~\ref{rem:geometric-parameters}, the sequences $\{\widehat{\psi}_i\}_{i=0}^{D-1}$ and $\{\widehat{\tau}_i\}_{i=0}^{D}$ coincide with the geometric parameter sequences of $\Gamma$.
\end{example}

We conclude the paper with the following table, which summarizes the five families in Examples~\ref{ex:hamming}--\ref{ex:bilinear}.
Here $\omega$ denotes the size of a Delsarte clique, and $\{\rho_i\}_{i=1}^D$ denotes the split ratio sequence.
In the table, $0\leq i\leq D-1$ for $\widehat{\psi}_i$ and $0\leq i\leq D$ for $\widehat{\tau}_i$.

\bigskip
\noindent
\begin{minipage}{\textwidth}
\centering
\captionof{table}{Summary of the five classical families in Examples~\ref{ex:hamming}--\ref{ex:bilinear}.}\label{tab:classical-families}
\renewcommand{\arraystretch}{1.5}
\resizebox{\textwidth}{!}{
\begin{tabular}{c|c|c|c|c|c|c}
	\hline
	$\Gamma$ & $(D,b,\alpha,\sigma)$ & $\widehat{\psi}_i$ & $\widehat{\tau}_i$
		& $\theta_D$ & $\omega$ & $\rho_i$ \\
	\hline \hline
	$H(D,q)$
	& $(D,1,0,q-1)$
	& $1$
	& $i$
	& $-D$
	& $q$
	& $1-q$ \\[3mm]

	$J(v,D)$
	& $(D,1,1,v-D)$
	& $i+1$
	& $i$
	& $-D$
	& $v-D+1$
	& $\displaystyle 1-\frac{v-D+1}{i}$ \\[5mm]

	$J_q(N,D)$
	& $\displaystyle \left(D,q,q,\gauss{N-D+1}{1}-1\right)$
	& $\displaystyle \gauss{i+1}{1}$
	& $\displaystyle \gauss{i}{1}$
	& $\displaystyle -\gauss{D}{1}$
	& $\displaystyle \gauss{N-D+1}{1}$
	& $\displaystyle 1-\frac{\gauss{N-D+1}{1}}{\gauss{i}{1}}$ \\[5mm]

	Dual polar
	& $(D,q,0,q^e)$
	& $1$
	& $\displaystyle \gauss{i}{1}$
	& $\displaystyle -\gauss{D}{1}$
	& $1+q^e$
	& $-q^e$ \\[5mm]

	$H_q(D,e)$
	& $(D,q,q-1,q^e-1)$
	& $q^i$
	& $\displaystyle \gauss{i}{1}$
	& $\displaystyle -\gauss{D}{1}$
	& $q^e$
	& $1-q^{e-i+1}$ \\[3mm]
	\hline
\end{tabular}
}
\end{minipage}


\section*{Acknowledgements}
The authors would like to thank Jack H. Koolen for helpful comments and suggestions on the introduction.
Part of this work was carried out during J.-H. Lee's visit to the Department of Mathematics at Kyungpook National University in summer 2026.
J. Park was supported by the National Research Foundation of Korea (NRF) grant funded by the Korea government (MSIT) (RS-2024-00356153).


\end{document}